\numberwithin{equation}{section}
\newtheorem{theorem}{Theorem}[section]
\newtheorem{definition}[theorem]{Definition}
\newtheorem{proposition}[theorem]{Proposition}
\newtheorem{lemma}[theorem]{Lemma}
\newtheorem{corollary}[theorem]{Corollary}
\newtheorem{remark}[theorem]{Remark}
\def\al{\aligned}
\def\eal{\endaligned}
\def\be{\begin{equation}}
\def\ee{\end{equation}}
\def\lab{\label}
\def\a{\alpha}
\def\e{\epsilon}
\def\td{\tilde}
\def\M{{\bf M}}
\def\al{\aligned}
\def\g{\bar}
\def\pa{\partial}
\def\nb{\nabla}
\def\lam{\lambda}
\def\mb{\mathbb}
\DeclareMathOperator{\Ric}{Ric}
\DeclareMathOperator{\Hess}{Hess}
\numberwithin{equation}{section}
\begin{document}

\title[K\"ahler Ricci flow]{Laplace comparison on K\"ahler Ricci flow and convergence}
\author[TZZZZ]{Gang Tian, Qi S. Zhang, Zhenlei Zhang, Meng Zhu and Xiaohua Zhu}
\address{BICMR and School of Mathematics, Peking University, Beijing, 100871, China}
\email{gtian@math.pku.edu.cn}
\address{Department of Mathematics, University of California, Riverside, CA 92521, USA}
\email{qizhang@math.ucr.edu}
\address{Department of Mathematics, Department of Mathematics, Capital Normal University, Beijing}
\email{zhleigo@aliyun.com}
\address{School of Mathematical Sciences, East China Normal University, Shanghai 200241, China}
 \email{mzhu@math.ecnu.edu.cn}
\address{BICMR and School of Mathematics, Peking University, Beijing, 100871, China}
\email{xhzhu@math.pku.edu.cn}
\date{2025.10.19,  MSC2020: 53E20,
		53E30, 53C23}

\begin{abstract}
 We first prove a uniform integral  Laplace comparison result for the K\"ahler Ricci flow on Fano manifolds which depends only on the initial metric. As an application, using  Cheeger-Colding theory and previous results  by some of the authors, we give a direct and independent proof of the Hamilton-Tian conjecture on convergence of K\"ahler-Ricci flows, modulo a codimension 4 singular set. We also expounded on some existing literature on this conjecture.
\end{abstract}
\maketitle
\tableofcontents

\section{Introduction}

In this paper we study  the (normalized) K\"ahler Ricci flows (KRF)
\be
\lab{krf}
\partial_t g_{i\g j} = -  R_{i\g j} + g_{i\g{j}} = \pa_i \pa_{\g j} u, \quad t>0,
\ee on a compact, K\"ahler manifold $M$ of complex dimension
$m=n/2$, with positive first Chern class. We always assume that
the initial metric is in the canonical K\"ahler class $2\pi
c_1(M)$.

Given initial K\"ahler metric $g_{i\g j}(0)$, it is  proved in
\cite{Ca:1} that (\ref{krf}) has a solution for all time
$t$. Recently, many  results concerning long time and uniform
behavior of (\ref{krf}) have appeared. For example, when the
curvature operator or the bisectional curvature is nonnegative, it
is known that solutions to (\ref{krf}) stays smooth when time goes
to infinity (see \cite{CCZ:1} for
examples). In the general case, Perelman (cf \cite{ST:1}) proved
that the scalar curvature $R$ is uniformly bounded, and the Ricci
potential $u(\cdot, t)$ is uniformly bounded in $C^1$ norm, with
respect to $g(t)$. When the complex dimension $m=2$, let $(M,
g(t))$ be a solution to (\ref{krf}),  it is proved in
(\cite{CW:1}) that the flow sequentially converges to an orbifold.
When $m=3$, the authors of \cite{TZz:1} proved sequential
convergence except on a small singular set of co-dimension 4. For the general case,
in the paper \cite{TZq1}, it was proved that a Gromov-Hausdorff limit
of the flow is a metric space with volume doubling
 and $L^2$ Poincar\'e inequality and $L^1$ isoperimetric inequality. In \cite{LTZ}, it is proven that the curvature tensor for some KRF can diverge to infinity as time goes to $\infty$.
 The convergence issue
of K\"ahler Ricci flows have attracted much attention recently. See for example
\cite{MS:1}, \cite{PS:1}, \cite{PSSW1:1}, \cite{PSSW2:1},
\cite{Se:1}, \cite{SoT:1},  \cite{CZ:1}, \cite{Sz:1}, \cite{SW:1}, \cite{To:1},
\cite{TZhu1:1}, \cite{TZhu2:1}, \cite{TZZZ:1}, \cite{Zz1:1},
\cite{Zz2:1} and \cite{Zhu:1} and references therein.  Much of these activities are centered around the:

{\it  Hamilton-Tian conjecture (\cite{Ti:1} p35). As $t \to \infty$, the manifolds (M, g(t)) in \eqref{krf} converge (at least in a subsequence) to a shrinking K\"ahler Ricci soliton, except on a set with at least co-dimension 4 singularities.}

As mentioned above, this conjecture was confirmed in complex dimensions 2 and 3 in \cite{CW:1} and \cite{TZz:1} respectively. In the higher dimension cases,
several years ago, two substantial papers \cite{CW:4} and \cite{Ba:1} have appeared where  solutions to the Hamilton-Tian conjecture are presented. See also  Section 4.
  In a more recent paper
\cite{WZ2}, another interesting path to this conjecture is presented, based on two ingredients. The first one is the lower bound of the Bergman functions for \eqref{krf} proven in \cite{Zhkw}, based on \cite{LS}.
The other is the Kodaira embedding of $M$ into $CP^N$ by  sections of holomorphic line bundles. It  appears that  the co-dimension 4 property of the singular set still needs a little extra work except for the K\"ahler Einstein case.

 Most recently in \cite{CMT:1}, certain convergence properties for manifolds whose Ricci curvature is in the Kato class are presented, including volume convergence and metric cone property. For compact manifolds, their condition actually includes the a priori bound that $|Ric|^2$ is in the Kato class in \cite{TZq2}. On p2641, line 5, results in two cited papers [62] and [67] (\cite{Kas} and \cite{KuSh} here) were used to justify the convergence of the Dirichlet energy under Gromov-Hausdorff convergence. However the main results in these two cited papers require either Lipschitz convergence or Ricci curvature being bounded from below by a constant. These conditions are not available in the Kato class setting. So the subsequent results involving convergence of heat kernels and distance, such as Proposition 2.12, seem unavailable yet.
  In any case, the authors wrote that the co-dimension of the singular set of the limit space was not determined c.f. p4 in \cite{CMT:1}. See also Remark \ref{rmk1} below.

 After the work of \cite{TZz:1}, it becomes clear to workers on the topic  that the main obstacle is the lack of integral Laplace comparison result in the spirit of \cite{PeWe1} since the Ricci curvature for each slice of the KRF $(M, g(t))$ lacks a priori $L^p$ bound with $p>n/2$, $n \ge 8$. Although we still can not prove it for $(M, g(t))$, in this paper will  prove the desired integral Laplace comparison result for a conformal metric of $g(t)$. The conformal factor $h(\cdot, t)$ has uniformly bounded $C^{1/2}$ norm and is bounded between two uniform, positive constants. This will allow us to give a direct proof of the conjecture. There are two key ingredients in the proof. One of them is to use the a priori bound that $|Ric|^2$ is in the Kato class (\cite{TZq2}) and the other is to consider a conformal metric $h g(t)$. Here $h$ is determined by an elliptic Schr\"odinger equation to mostly cancel the negative part of the Ricci curvature.
 This is partly inspired  by a similar idea in  \cite{CMT:2}, where the authors treated the "time changed" metric instead of the conformal metric. See Remark \ref{rmk1} below.

 Let us recall the necessary prerequisites first.
Between 2002 and 2015, the following basic properties have been proven by a number of authors. In order to state them, let us fix some notations first. Given two points $x, y \in M$ and a metric $g=g(t)$, we use $d(x, y, t)$ or $d(x, y, g(t))$ or $d_g(x, y)$ to denote the distance, $dg(t)$ or $dg$ for Riemannian  volume element. $B(x, r, g(t))$ or $B(x, r, t)$ denote the geodesic ball centered at $x$ with radius $r$ under $g(t)$, and $|B(x, r, g(t))|_{\td g}$ denote its volume under another metric $\td g$ which may also be $g$ itself. The Ricci curvature is denoted by $Ric_{g(t)}$ and the scalar curvature is denoted by $R(g(t))$. The diameter is denoted by $diam(M, g(t))$. In addition $\Delta_{g(t)}$ is the Laplace-Beltrami operator under $g(t)$ and $\nabla_{g(t)}$ is the covariant derivative. If no confusion arises, we will drop the reference to $g, g(t)$ or $t$ from all these notations.

\noindent {\it Basic Properties KRF.}  Let $(M, g(t))$ be  a K\"ahler Ricci flow (\ref{krf}) on a compact manifold with positive first Chern class.
There exist uniform positive constants $C$ and $\kappa$ depending only on
$g(0)$ so that \\

1.  $| R(g(t))| \le C$. \quad
2. $diam (M, g(t)) \le C.$ \quad
3. $\Vert u(\cdot, t) \Vert_{C^1} \le C.$ Here $u$ is the Ricci potential so that
\be
\lab{ricpot}
R_{i \g j}
= - u_{i \g j} +g_{i \g j}.
\ee

4. $|B(x, r, t)|_{g(t)} \ge \kappa r^n$, for all $t>0$ and $r \in (0, diam (M, g(t)))$.

5. $
|B(x, r, t)|_{g(t)} \le \kappa^{-1}  r^n
$ for all $r>0$, $t>0$.

6. There exists a uniform constant $S_2$ so that the following $L^2$ Sobolev inequality holds: \\
 \[
 \left( \int_{M}  v^{2n/(n-2)} d g(t) \right)^{(n-2)/n} \le S_2 \left( \int_{M} | \nabla v |^2
 dg(t) +  \int_{M} v^2
 dg(t) \right)
\]for all $ v \in C^\infty(M, g(t))$.

7. (a). Let $\Gamma$ be the Green's function on $(M, g(t))$.
Then there exists a uniform constant $C$ such that \be
\lab{GdGjie} |\Gamma(x, y) | \le \frac{C}{d(x, y)^{n-2}}, \quad
|\nabla \Gamma(x, y) | \le \frac{C}{d(x, y)^{n-1}}. \ee
Here $d(x, y)$ is the geodesic distance on $(M, g(t))$.

(b). Let $p=p(x, s, y)$ be the (stationary) heat kernel for
$({M}, g(t))$.  There exist positive constants $a_1$ and $a_2$,
depending only on $g(0)$ such that

\be \frac{a_1}{s^{n/2}} e^{-a_2
d(x, y)^2/s} \le p(x, s, y) \le \frac{1}{a_1 s^{n/2}} e^{- d(x,
y)^2/(a_2 s)}, \qquad s \in (0, 1]; \ee

 \be
 |\nb p(x, s, y)| \le \frac{1}{a_1 s^{(n+1)/2}}
e^{- d(x, y)^2/(a_2 s)}, \qquad s \in (0, 1]. \ee

8. uniform $L^2$ Poincar\'e inequality: for any $v \in
C^\infty(B(x, r))$ where $B(x, r)$ is a proper ball in $({M},
g(t))$, there is a uniform constant $C$ such that \be \lab{l2PI}
\int_{B(x, r)} |v-  v_B |^2 dg \le C r^2 \int_{B(x, r)} |\nb v|^2 dg.
\ee Here and below all quantities are with respect to  $g=g(t)$ and $v_B$
is the average of $v$ in $B(x, r)$.

9.  $K_2$ bound for the Hessian of the Ricci potential $u$: $R_{i \g j}
= - u_{i \g j} +g_{i \g j}$.
\[
\int_M \frac{| Hess \,  u |^2(y)}{d(x, y)^{n-2}} dg(y) \le C.
\]In particular, if one writes $F=|Ric|$, then
\be
\lab{defkato}
K(F^2) \equiv \int_M \frac{| Ric |^2(y)}{d(x, y)^{n-2}} dg(y) \le C.
\ee The notation $K(\cdot)$ stands for Kato type norms frequently used in the study of Schr\"odinger equations.

10. \[ \int_M |Ric(\cdot, t)|^4 dg \le C.\]

Properties  1-3 and 4 are proven by Perelman (c.f. \cite{ST:1}, \cite{P:1}); Property 5 can be found in \cite{Z11:1}
and also \cite{CW:2}; Property 6 was first proven in
\cite{Z07:1} (see also \cite{Ye:1}, \cite{Z10:1} ).
 Properties 7 and 8 are in \cite{TZq1}, proven as a consequence of properties 3, 4, 5, 6, and Property 9 is in \cite{TZq2}. Property 10 is in \cite{TZz:1}.

By going over the proofs of these properties, one knows that the uniform constants in properties 1-9 depend only on the dimension $n$, $C^1$ bound of the initial Ricci potential $u(\cdot, 0)$, $L^\infty$ bound of the initial scalar curvature, initial volume and the constant for the Sobolev inequality for $g(0)$. We refer to these as the basic parameters of the initial metric throughout the paper. A simple but useful observation is the following.

{\remark
\lab{blowupconst}
All the bounds in 1, 3-9 still hold with the same constants if one scale up the metric , i.e. replacing $g$ by $\lam g$ for a constant $\lam \ge 1$. Some of the constants, including that in 9, will become smaller.}

 The first theorem of the paper is

\begin{theorem}\label{thplapc}
Let $(M, g(t))$, $t>0$, be a time slice of the KRF \eqref{krf}. There exists a smooth function $h$ and positive constants $b_1, b_2$ and $b_3$ depending only on the basic parameters of the initial metric such that

 (a). $b_1 \le h \le b_2$;

 (b).  $\Vert h \Vert_{C^{1/2}(M)} \le b_3$. Moreover $\Vert \nb h \Vert_{L^p(M)} \le c_p$ for all $p \ge 1$ with $c_p$ depending only on $p$ and the basic parameters of the initial metric.

 (c). Let $\td g = h g(t)$ and $q \in (n, 2n)$. For any point $x_0 \in M$, let $\td r =d_{\td g}(x_0, x)$ be the geodesic distance from $x_0$ to $x$ under $\td g$. Let
 \[
 \psi = \Delta_{\td g} \td r - \frac{n-1}{\td r}
 \]defined outside the cut-locus of $x_0$.

 There exists a constant $\beta^*_q$ depending only on $q$ and the basic parameters of the initial metric such
 that
 \[
 \Vert \psi_+ \Vert_{L^q(M, \td g)} = \left(\int_M \psi^q_+ d\td g \right)^{1/q} \le \beta^*_q.
 \]
\end{theorem}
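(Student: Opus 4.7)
The overall plan is to absorb the negative part of $\Ric$ into a conformal factor $h$ so that Petersen--Wei's classical integral Laplace comparison becomes applicable on the conformally changed metric $\td g = h g(t)$. The Kato bound on $|\Ric|^2$ (Property~9) is the essential hypothesis that enables this construction.

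First I would construct $h$ as the normalized positive principal eigenfunction of a Schr\"odinger-type equation
\[
-\De_{g(t)} h + V h = \mu h,
\]
where $V$ is built from $|\Ric|$ together with a cutoff constant chosen so as to dominate the worst direction of $\Ric$. Property~9 says that $V^{2}$ lies in the Kato class uniformly in $t$, and standard Schr\"odinger theory with Kato-class potentials gives (i) a uniform upper bound on $\mu$, (ii) a two-sided bound $b_1 \le h \le b_2$ after normalizing $\min h = b_1$, and (iii) a uniform H\"older estimate $\Vert h\Vert_{C^{1/2}} \le b_3$, establishing (a) and (b). The $L^p$ control of $\nb h$ follows from the Green's function representation
\[
\nb h(x) = -\int_M \nb_x \Gamma(x,y)\bigl(V(y) - \mu\bigr) h(y)\,dg(y),
\]
combined with the gradient bound on $\Gamma$ in Property~7 and H\"older's inequality against the Kato bound.

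Next I would use the conformal transformation formula
\[
\td R_{ij} = R_{ij} - \tfrac{n-2}{2h}\Hess_{ij} h + \tfrac{3(n-2)}{4h^{2}}\nb_i h\,\nb_j h - \tfrac{\De h}{2h}\,g_{ij} + \tfrac{4-n}{4h^{2}}|\nb h|^{2}g_{ij}.
\]
Substituting the eigen-equation turns $-\De h/(2h)\,g_{ij}$ into $\tfrac{1}{2}(V-\mu)g_{ij}$, which by the design of $V$ cancels the most negative eigenvalue of $R_{ij}$. The tensorial Hessian term $\Hess h/h$ is controlled in $L^{q/2}$ by elliptic $W^{2,p}$ theory applied to $\De h = (V-\mu)h$ (in a Morrey space adapted to the Kato bound when $n \ge 8$), and the quadratic gradient pieces by the $L^p$ bound on $\nb h$ just established. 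This yields $\Vert(\td\Ric)_{-}\Vert_{L^{q/2}(M,\td g)} \le C$ for every $q \in (n,2n)$. Since $b_1 \le h \le b_2$, the metric $\td g$ is bi-Lipschitz to $g(t)$ and inherits doubling and Sobolev/Poincar\'e inequalities from Properties~4--8, so the Petersen--Wei Riccati argument runs on $(M,\td g)$: along each minimizing $\td g$-geodesic, $\psi$ satisfies $\pa_{\td r}\psi + \psi^{2}/(n-1) \le -\td\Ric(\gamma',\gamma')$, an ODE comparison pointwise bounds $\psi_+^{q}$ by a radial integral of $(\td\Ric_{-})^{q/2}$, and integrating in polar coordinates against $d\td g$ produces the $L^q$ bound in (c).

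The main obstacle is the $L^{q/2}$ Hessian estimate with $q/2 > n/2$: only $|\Ric|\in L^4$ and the Kato bound on $|\Ric|^2$ are available a priori, so a direct application of Calder\'on--Zygmund fails to place $\Hess h$ in $L^{q/2}$ when $n \ge 8$. The remedy is either Calder\'on--Zygmund in a Morrey space adapted to the Kato norm, using the decay of $\Gamma$ in Property~7 to upgrade the Kato bound on $|\Ric|^2$ to an effective Morrey bound; or to redesign $V$ so that only the trace part $\De h/h$ (rather than the full symmetric tensor $\Hess h/h$) enters the critical Riccati estimate, using the radial character of Petersen--Wei's argument to absorb the trace-free remainder along each geodesic.
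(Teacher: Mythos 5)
Your construction of $h$ as the positive principal eigenfunction of a Schr\"odinger operator with potential built from $|\Ric^-|$ is essentially the paper's Step~2 (the paper takes $h=\phi^2$ with $\De\phi+V\phi=\rho_0\phi$; note that your eigen-equation $-\De h+Vh=\mu h$ has the wrong sign on $V$ for the cancellation you claim, since then $-\De h/(2h)=(\mu-V)/2$, not $(V-\mu)/2$), and parts (a) and (b) are obtainable along the lines you sketch. The genuine gap is in part (c). Your plan is to bound $(\Ric_{\td g})_-$ in $L^{q/2}$ with $q/2>n/2$ and then quote the Petersen--Wei comparison on $(M,\td g)$ verbatim. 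That requires $\Hess h\in L^{q/2}$ with $q/2>n/2\ge 4$, but the only a priori information on $V=|\Ric^-|$ is the uniform $L^4$ bound (Property~10) and the Kato/Morrey bound on $|\Ric|^2$ (Property~9); elliptic theory applied to $\De h=(V-\mu)h$ then yields at best $\Hess h\in L^4$, respectively an $L^2$-Morrey bound of the form $\int_{B(x,r)}|\Hess h|^2\,dg\le C_\e r^{n-2-\e}$ (this is exactly what the paper proves in Proposition~3.1), both strictly below the $L^{p}$, $p>n/2$, threshold when $n\ge 8$. An $L^{p>n/2}$ bound on $\Ric_{\td g}^-$ is essentially the very bound whose absence the introduction identifies as the main obstacle, so it cannot be an intermediate step. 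Neither of your remedies closes this: a ``Morrey-adapted Calder\'on--Zygmund'' cannot raise the integrability exponent beyond what the potential permits, and ``absorbing the trace-free remainder along each geodesic'' is not a routine adjustment --- the radial Riccati inequality sees the full component $\Hess f(\pa_{\td r},\pa_{\td r})$ plus conformal correction terms, and arranging for only first-order quantities to survive is precisely the missing idea.

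For comparison, the paper never estimates $\Ric_{\td g}$ in order to prove (c). Its key device is to apply the Bochner formula of the \emph{original} metric $g$ to the function $|\nb_{\td g}u|^2_{\td g}=h^{-1}|\nb u|^2$: in that identity only $\De f$ and $|\nb f|^2$ appear (no Hessian of $f$), and the choice $f=\ln\phi$, i.e.\ $\De f+|\nb f|^2+V-\rho_0=0$, cancels the dangerous Ricci term exactly. Taking $u=\td r$ then produces a modified Riccati inequality for $\psi$ whose extra coefficients $\a$ and $\b$ are first-order in $h$ and hence lie in every $L^p$ by part (b), and the Petersen--Wei integration is carried out for $(\psi-\b)_+$ rather than $\psi_+$. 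If you wish to salvage your route you would need to implement a device of this kind (or a Wei--Wylie-type integration by parts of the radial Hessian along $\td g$-geodesics, carefully tracking the conformal Christoffel corrections); as written, the $L^{q/2}$ bound on $\Ric_{\td g}^-$ at the heart of your argument is unavailable.
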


One can also replace $M$ in the integral by any proper ball $B(x_0, r, \td g)$ and $\beta^*_q$ by a function depending also on $r$. See Remark \ref{rmth1.2ball} below.

An immediate consequence is the classical volume comparison theorem for both the original and the conformal metric, modulo a small error term which goes to $0$ as the scale goes to $0$ in a definite rate.

\begin{corollary}
\lab{volcomp} Let $g=g(t)$ be as in Theorem \ref{thplapc}.

 (a). Given $q \in (n, 2n)$, there exists a constant $\td b_q$ depending only on $q$ and the basic parameters of the initial metric such
 that
 \[
\frac{d}{dr}\left(\frac{|B(x,r, \td g)|_{\td g}}{r^n}\right)
\le \frac{\td b_q}{r^{n/q}},
\]
\[
\frac{|\partial B(x, r_2, \td g)|_{\td g}}{r^{n-1}_2} -
\frac{|\partial B(x, r_1, \td g)|_{\td g}}{r^{n-1}_1}
 \le \td b_q \left(r^{(q-n)/q}_2 - r^{(q-n)/q}_1 \right).
\]

(b). Given $q \in (n, 2n)$, there exists a constant $b_q$ depending only on $q$ and the basic parameters of the initial metric such
\[
\frac{|B(x, r_2)|}{r^n_2} - \frac{|B(x, r_1)|}{r^n_1} \le b_q \, r^{1-(n/q)}_2, \qquad r_2>r_1>0.
\]
\end{corollary}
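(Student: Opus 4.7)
The plan is to reduce Corollary \ref{volcomp} to the integral Laplace comparison of Theorem \ref{thplapc}(c) by working in $\tilde g$-geodesic polar coordinates centered at $x_0$, and then to transfer the resulting $\tilde g$-estimate back to $g$ via the conformal factor $h$. With $\mathcal{J}(r,\theta)$ denoting the Jacobian of the $\tilde g$-exponential map at $x_0$ (extended by $0$ past the cut locus), the polar identity $\Delta_{\tilde g}\tilde r=\partial_r\log\mathcal{J}$ converts Theorem \ref{thplapc}(c) into $\partial_r\log\mathcal{J}-(n-1)/r\le\psi_+$. Setting $\phi(r,\theta)=\mathcal{J}(r,\theta)/r^{n-1}$ and integrating in $\theta$ yields the fundamental inequality
\[
\frac{d}{dr}\!\left(\frac{A(r)}{r^{n-1}}\right)\le\frac{1}{r^{n-1}}\int_{\partial B(x_0,r,\tilde g)}\psi_+\,d\sigma_{\tilde g},
\]
where $A(r)=|\partial B(x_0,r,\tilde g)|_{\tilde g}$; both estimates in part (a) flow from this inequality.

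For the first inequality of (a), integration by parts on $V(r)=\int_0^r\rho^{n-1}\phi(\rho)\,d\rho$ converts $\frac{d}{dr}(V(r)/r^n)$ into $r^{-n-1}\int_0^r\rho^n\phi'(\rho)\,d\rho$, which is bounded by $r^{-n-1}\int_{B(x_0,r,\tilde g)}\tilde r\,\psi_+\,d\tilde g$. H\"older's inequality with exponents $q$ and $q'=q/(q-1)$, combined with the upper volume bound from Property 5 (inherited by $\tilde g$ up to constants because $b_1\le h\le b_2$), gives $\int_{B(x_0,r,\tilde g)}\tilde r^{q'}\,d\tilde g\le Cr^{n+q'}$, hence $\frac{d}{dr}(V(r)/r^n)\le\tilde b_q r^{-n/q}$. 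For the second inequality, I would integrate the fundamental inequality from $r_1$ to $r_2$, rewrite the right side via coarea as $\int_{B(x_0,r_2,\tilde g)\setminus B(x_0,r_1,\tilde g)}\tilde r^{-(n-1)}\psi_+\,d\tilde g$, use the bootstrapped area bound $A(\rho)\le C\rho^{n-1}$ (a consequence of the first inequality integrated from $0$), and apply H\"older spherically and then radially. The relevant radial integral $\int_{r_1}^{r_2}\rho^{-(n-1)/(q-1)}\,d\rho$ is finite precisely when $q>n$, which supplies the dependence on $r_1$ and $r_2$ in the stated form.

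Part (b) then follows by conformal transfer. Using the metric comparability $\sqrt{b_1}\,d_g\le d_{\tilde g}\le\sqrt{b_2}\,d_g$, the volume identity $dg=h^{-n/2}d\tilde g$, and the pointwise estimate $|h(y)-h(x_0)|\le b_3\,d_g(x_0,y)^{1/2}\le b_3 r^{1/2}$ on $B(x_0,r,g)$ coming from the $C^{1/2}$ bound, a direct computation yields
\[
\frac{|B(x_0,r,g)|_g}{r^n}=\frac{|B(x_0,\sqrt{h(x_0)}\,r,\tilde g)|_{\tilde g}}{(\sqrt{h(x_0)}\,r)^n}+O(r^{1/2}),
\]
with constants depending only on $b_1,b_2,b_3$ and Property 5. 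Since $q\in(n,2n)$ forces $1-n/q\in(0,1/2)$ and the diameter is uniformly bounded (Property 2), the $r^{1/2}$ error is absorbed into $r^{1-n/q}$; combining with the first inequality of (a) integrated in $r$ then produces the stated bound $b_q r_2^{1-n/q}$.

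The main technical obstacle is the exponent matching in the second inequality of (a). The natural double-H\"older estimate produces $(r_2^{(q-n)/(q-1)}-r_1^{(q-n)/(q-1)})^{(q-1)/q}$, which by concavity dominates $r_2^{(q-n)/q}-r_1^{(q-n)/q}$ but is strictly larger when $r_2-r_1$ is small compared to $r_1$, and can indeed blow up relative to the target as $q\to n^+$ in the regime $r_1\approx r_2$. Closing this gap requires either a dyadic decomposition of $[r_1,r_2]$ combined with the absolute estimate $\phi(r_2)-\phi(r_1)\le\tilde b_q r_2^{(q-n)/q}$ to handle the regime $r_1\ll r_2$, or a H\"older split tailored to deliver the exact stated form; either way it is the place where the most care is needed. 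The conformal-change analysis in part (b) is routine by comparison, but still requires tracking how $h$ varies by $O(r^{1/2})$ along both $g$- and $\tilde g$-geodesics inside $B(x_0,r,g)$.
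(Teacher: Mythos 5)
Your proposal follows essentially the same route as the paper. For the first display of (a), the paper simply quotes inequality \eqref{differential of volume ratio} (Lemma 2.3 of \cite{ZqZm:1}), which is exactly the polar--coordinate inequality you re-derive from $\pa_{\td r}\ln\sqrt{det \td g}=\Delta_{\td g}\td r$, and then applies H\"older with Theorem \ref{thplapc}(c) together with the non-inflating bound $|B(x,r,\td g)|_{\td g}\le \kappa^{-1}r^n$, giving $\td b_q r^{-n/q}$ just as you do. For (b), the paper argues via the ball inclusions $B(x,\sqrt{h_1}\,r,\td g)\subset B(x,r)\subset B(x,\sqrt{h_2}\,r,\td g)$, the oscillation bound $|h_2/h_1-1|\le c\sqrt{r_2}$ from the $C^{1/2}$ estimate, part (a) integrated in $r$, and the observation that $q<2n$ gives $1-n/q<1/2$ so the $\sqrt{r_2}$ error is absorbed; your conformal-transfer computation is the same mechanism, differing only in that you phrase it as an identity with an $O(r^{1/2})$ error rather than as chained one-sided inclusions (your upper inclusion direction also quietly needs part (a) or the non-inflating bound to convert $|B(\rho(1+O(r^{1/2})),\td g)|$ into $|B(\rho,\td g)|$ plus an admissible error, which your plan does supply).

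The obstacle you flag in the second display of (a) is genuine, and you should be aware that the paper does not resolve it either: that display is dispatched with the single word ``Similarly'', i.e.\ by the same H\"older argument, which, as you correctly compute, yields $\bigl(r_2^{(q-n)/(q-1)}-r_1^{(q-n)/(q-1)}\bigr)^{(q-1)/q}$ rather than $r_2^{(q-n)/q}-r_1^{(q-n)/q}$; the former dominates the latter only with a factor that degenerates as $r_1\to r_2^-$. A uniform constant in the stated form would amount to an a.e.-in-$r$ bound on $r^{1-n}\int_{\pa B(x,r,\td g)}\psi_+\,d\sigma$, which does not follow from an $L^q$ bound on $\psi_+$ alone, so your proposed repairs (dyadic splitting, tailored H\"older) would at best recover the Petersen--Wei-type exponent structure of \cite{PeWe1}, not the literal display --- and note the irony that Remark \ref{rmk1} criticizes \cite{CMT:2} for exactly this kind of degeneration as $r_1\to r_2^-$. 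Since only the first display of (a) appears to be used in the sequel (in the proof of (b) and of Proposition \ref{prtzqco1.7}), this does not affect the applications, but it is a fair observation about the statement rather than a defect peculiar to your attempt; everywhere the paper gives details, your argument matches it.
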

We mention that when $q \to n$ or $2n$ the constants $b_q$ and $\td b_q$ may go to $\infty$.

\begin{remark}
\lab{rmk1}
In Theorem C of \cite{CMT:2}, a similar result to part (b) of the corollary is presented. However, the constant on the right hand side goes to infinity as $r_1 \to r_2^-$.
Also, during the proof, on p15, line 10, the stated volume ratio bound seems to require further proof. The reason is that the "time changed" manifold is not the same as those standard weighted manifolds treated in the cited references on the same page. In the time changed manifold, the metric is $e^{2f} g$ and the volume element is $e^{2 f} dg$ which is different from $e^{n f} dg$ in the standard weighted case. The curvature dimension inequality implies certain Laplace comparison theorem. But the latter does not immediately imply the stated bound for the volume ratio for the time changed manifold. The reason is that the required inequality between volume ratio and Laplace comparison
\[
\frac{d}{dr}\left(\frac{|B(x,r)|}{r^n}\right)
\leq \frac{1}{r^{n+1}}\int_{\mb{S}^{n-1}}\int_0^r s\,\left(\Delta r-
\frac{n-1}{r} \right)_+ \, \sqrt{det g(s,\theta)} dsd\theta
\]may not be available for the time changed manifold.
\end{remark}

\begin{remark}
The method used in the proof of the theorem may be applicable to manifolds with Bakry-\'Emery curvature bounded from below under weaker than usual assumptions on the potential function $f$ or $\nb f$. For a general manifold $(M, g)$, in the $K_2$ condition on the Ricci curvature \eqref{defkato}, the power $2$ can be lowered somewhat as far as the Laplace comparison and relative volume comparison are concerned. This may be of independent interest.
\end{remark}

 As an application of Theorem \ref{thplapc}, the second result of the paper confirms the Hamilton-Tian conjecture:

\begin{theorem}
\label{thhtconj}
 As $t \to \infty$,  a subsequence the manifolds (M, g(t)) in \eqref{krf} converge, in $C^\infty_{loc}$ sense,  to a shrinking K\"ahler Ricci soliton with at most real co-dimension 4 singularities.
\end{theorem}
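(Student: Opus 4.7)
The plan is to extract a Gromov--Hausdorff (GH) subsequential limit of $(M, g(t_k))$, run the Cheeger--Colding machinery on the conformally modified metrics $\td g_k = h_k g(t_k)$ using the integral Laplace comparison from Theorem \ref{thplapc} in place of a pointwise Ricci lower bound, and finally identify the limit as a gradient shrinking K\"ahler--Ricci soliton via Perelman's $\mu$-entropy monotonicity. Throughout I exploit the fact that $h_k \in [b_1,b_2]$, so $\td g_k$ and $g(t_k)$ are uniformly bi-Lipschitz equivalent, and GH convergence (and bi-Lipschitz structure of the limit) can be transferred back and forth between the two.

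First I would fix any $t_k \to \infty$. By Basic Properties 2, 4, 5 the diameters are uniformly bounded and volume ratios are pinched between $\kappa r^n$ and $\kappa^{-1} r^n$, so Gromov's compactness yields a subsequence (still $t_k$) with $(M, g(t_k)) \to (X, d_X)$ in pointed GH topology. Corollary \ref{volcomp} supplies, for $\td g_k$, a Bishop--Gromov-type relative volume inequality with error vanishing in a definite rate as the scale shrinks; this is the precise substitute for a pointwise Ricci lower bound needed in the Cheeger--Colding theory. Combined with the heat kernel bounds, $L^2$ Sobolev inequality and $L^2$ Poincar\'e inequality of Basic Properties 6--8 (which are bi-Lipschitz invariant up to multiplicative constants), this yields volume convergence, the integral segment inequality, and the metric-cone structure of tangent cones at every point of $X$.

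Next I would set up the regular/singular stratification: let $\mathcal{R}\subset X$ be the points at which some tangent cone is isometric to $\IR^n$, and $\mathcal{S}=X\setminus \mathcal{R}$, stratified as $\mathcal{S}_0\subset\cdots\subset \mathcal{S}_{n-1}=\mathcal{S}$ by splitting degree. Invariance of all relevant bounds under upward rescaling (Remark \ref{blowupconst}) plus the $L^4$ Ricci bound of Basic Property 10, which scales favorably, ensures these ingredients persist on tangent cones obtained by blowing up at small scales. Combining the cone-splitting theorem with the $L^4$ Ricci bound in the spirit of Cheeger--Naber, I would argue $\mathcal{S}_{n-3}=\emptyset$: the residual three-dimensional cross section of a would-be $\IR^{n-3}$-splitting tangent cone would concentrate $|\Ric|^2$ near the tip in a manner incompatible with the Kato bound $K(|\Ric|^2)\le C$ of Basic Property 9 under rescaling. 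Hence $\dim_{\mathcal{H}}\mathcal{S}\le n-4$. Adapting the Cheeger--Naber quantitative covering argument to our conformal integral Laplace setting, so as to work with $\td g_k$ throughout and then translate back to $g(t_k)$, is the step I expect to be the main obstacle.

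On $\mathcal{R}$ I would upgrade GH convergence to $C^\infty_{loc}$ convergence by an $\eps$-regularity argument. At a regular point the volume ratio on small balls is $\eps$-close to the Euclidean one; coupled with the uniform $L^4$ Ricci bound, the $L^2$ Sobolev inequality, the uniform scalar curvature and $C^1$ Ricci potential bounds (Basic Properties 1, 3, 6, 10), a Moser iteration on $|\Rm|^2$ in the spirit of \cite{TZz:1} yields a pointwise curvature bound on a definite subball. Shi's derivative estimates then upgrade this to smooth convergence of $(M, g(t_k), J)$ on $\mathcal{R}$ to a K\"ahler manifold $(\mathcal{R}, g_\infty, J_\infty)$, with $J_\infty$ obtained as the limit of the parallel complex structures. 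Finally, Perelman's $\mu$-entropy along \eqref{krf} is uniformly bounded and monotone, so the integrand in its time derivative, which measures $L^2$-deviation from the gradient shrinking soliton equation, lies in $L^1((0,\infty))$; choosing the $t_k$ so that this integrand tends to $0$, and using the $C^1$ bound on $u$ from Basic Property 3 together with the smooth convergence on $\mathcal{R}$, a subsequential limit $u_\infty$ exists and satisfies $R_{i\g j}(g_\infty)+(u_\infty)_{i\g j}=(g_\infty)_{i\g j}$ with $(u_\infty)_{ij}=0$. Thus $(\mathcal{R}, g_\infty, J_\infty, u_\infty)$ is a gradient shrinking K\"ahler--Ricci soliton whose metric completion is $X$ with real codimension-four singular set, as asserted.
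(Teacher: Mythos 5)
Your overall scaffolding (Gromov compactness, Cheeger--Colding machinery run on the conformal metrics $\td g_k=h_kg(t_k)$ with the integral Laplace comparison of Theorem \ref{thplapc} replacing a Ricci lower bound, transfer back via the pinched $C^{1/2}$ conformal factor, soliton identification by Perelman's entropy) matches the paper through the codimension-2 stage. But the two decisive steps are where your proposal has genuine gaps. For the codimension-4 estimate you propose a Cheeger--Naber style elimination of $\mathcal{S}_{n-3}$, claiming that a tangent cone splitting $\IR^{n-3}$ would force $|\Ric|^2$ to concentrate near the tip in contradiction with the Kato bound. This does not work: under blow-up the Kato norm $K(|\Ric|^2)$ only becomes \emph{smaller} (Remark \ref{blowupconst}), i.e.\ the rescaled Ricci is small in the only integral sense available, and model cones such as $\IR^{n-2}\times C(S^1_\gamma)$ or $\IR^{n-3}\times C(Z)$ are Ricci-flat away from the vertex, so nothing forces concentration of $|\Ric|^2$ in the approximating manifolds and no contradiction with Basic Property 9 arises. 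Eliminating these strata under even a two-sided pointwise Ricci bound is exactly the hard content of Cheeger--Naber (slicing and transformation theorems), which is not available with Kato/$L^p$ Ricci control. Moreover your appeal to Basic Property 10 as ``scaling favorably'' is wrong in the relevant range: Property 10 is deliberately excluded from Remark \ref{blowupconst}, and $\int|\Ric_{\lam g}|^4_{\lam g}\,d(\lam g)=\lam^{(n/2)-4}\int|\Ric|^4\,dg$, which is scale-invariant at $n=8$ and blows up for $n>8$ --- this is precisely the obstruction the paper is built to circumvent. The paper's Step 4 instead uses that the tangent cones of the limit are K\"ahler (because $h_i$ is uniformly pinched and $C^{1/2}$-bounded, the cones agree with those of the K\"ahler metrics $g(t_i)$), and then pushes the Cheeger--Colding--Tian argument \cite{CCT} through for $\td g$, replacing the pointwise Ricci bound at the two places it enters (their Theorem 3.7 and Theorem 7.2) by the Morrey/Kato bounds for $\Ric_{\td g}$ and the uniform $L^2$ bound on $\Rm_{\td g}$ of Proposition \ref{pr3.00}, via the scaled smallness \eqref{richatscale} and the $C^{1,\a}$ harmonic-radius theory of \cite{TZq2}. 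Your sketch invokes neither the K\"ahler structure nor the $L^2$ curvature bound at this step, and without one of these ingredients the codimension-4 claim has no proof.

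The smooth convergence on $\mathcal{R}$ has a parallel problem. Your $\eps$-regularity via elliptic Moser iteration on $|\Rm|^2$ driven by the $L^4$ Ricci bound reproduces \cite{TZz:1} only because there $n=6$ and $4>n/2$; for $n\ge 8$ the $L^4$ bound is critical or subcritical and the iteration does not close, so no pointwise curvature bound follows. The paper's argument is essentially parabolic: almost-Euclidean volume at a regular point, upgraded by the volume comparison of Corollary \ref{volcomp}, gives a definite $C^{1,\a}$ harmonic radius (Proposition \ref{prtzqco1.7}), hence an almost-Euclidean local isoperimetric constant, and then the backward pseudolocality of Bamler--Zhang \cite{BZ:1} for flows with bounded scalar curvature yields pointwise curvature bounds on a full parabolic neighborhood $B(x_i,c\delta r)\times[t_i-c\delta^2r^2,t_i+c\delta^2r^2]$, after which Shi-type estimates and a covering give $C^\infty_{loc}$ convergence. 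The use of the flow in the time direction is essential here and is entirely absent from your argument. (Your final identification of the limit as a gradient shrinking K\"ahler--Ricci soliton via the monotone $W$-entropy is fine and coincides with the paper's citation of \cite{Se:1} and \cite{TZz:1}.)
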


Let us describe further the idea of the proof. As mentioned, the main difficulty is the lack of certain integral Laplace comparison theorem for $(M, g(t))$ due to the lack of suitable Ricci bound. Instead we will consider a conformal metric $\td g= h g(t) \equiv e^{ 2 f(\cdot, t)} g(t)$ where $h$ is determined by an appropriate elliptic equation to cancel the worst part of the Ricci term. The general idea of using conformal metrics is not new and can be found in \cite{Zz2:1} e.g. What is new is the way to handle the extra terms such as $\Delta h$, $\Hess h$ which appear in the Ricci curvature for $\td g$ and suitable a priori bounds for $f$, $|\nb f|$. For example, we will show $f$ is bounded between two uniform constants, $|\nb f|$ is almost bounded. After these, we can prove the desired integral Laplace comparison result. Then we will extend the Cheeger-Colding theory to $(M, \td g(t))$, and prove convergence modulo co-dimension $2$ singularity. Finally using further a priori bounds for the Ricci and curvature tensor for the conformal metrics, we can generalize the result in Cheeger-Colding-Tian \cite{CCT} to conclude that the singular set has co-dimension at least $4$. The a priori bound for $|\nb f|= 0.5 |\nb \ln h|$ in Theorem \ref{thplapc} is just short of $L^\infty$ bound, which makes the generalization more involved. Recall that in \cite{WZ1} Sec. 5, 6, these generalizations and extensions have been done under the slightly stronger assumption that $|\nb f| \in L^\infty$. During the proof, recent work of several authors will also be used and they will be cited in the main body of the paper.

Theorem \ref{thplapc} and Theorem \ref{thhtconj} will be proven in Section 2 and 3 respectively.

Here are some additional notations and conventions to be used frequently. Given a tensor $T$ and metric $g$, $|T|_g$ denotes its Hilbert-Schmidt norm at a point. $C$ with or without index is a positive constant which may change from line to line. G.H. stands for Gromov-Hausdorff topology. $B_{\mathbb{R}^n}(0, r)$ denotes Euclidean balls centered at $0$ with radius $r$. $Rm_g$, $Ric_g$ and $R_g$ denote respectively the curvature tensor, Ricci curvature and scalar curvature with respect to the metric $g$, which may be dropped if no confusion arises.

\section{Proof of Theorem \ref{thplapc}, Laplace comparison}

The proof is divided into four steps.
\medskip

{\it Step 1.}  Converting to a conformal metric.

 Let $h=e^{2f}$ be a positive smooth function on $(M, g)$ and
\be
\lab{deftdg}
\tilde{g} = h g
\ee be a conformal metric. The function $h$, being arbitrary for now, will be determined by an elliptic Schr\"odinger equation involving the negative part of the Ricci curvature for $(M, g)$ later.

In the following, we will use $\Delta$, $\nabla$, $Hess$ and $<\cdot, \cdot>$  to denote
the Laplace, gradient, Hessian and inner products with respect to $g$. The corresponding operators with respect to $\tilde{g}$ will be denoted by $\Delta_{\tilde{g}}$, $\nabla_{\tilde{g}}$, $Hess_{\tilde{g}}$ and $<\cdot, \cdot>_{\tilde{g}}$ respectively. The volume elements for $g$ and $\tilde g$ are denoted by $dg$ and $d \tilde g$. Given a tensor field $X$ on $M$, the notations $|X|$ and $|X|_{\tilde{g}}$ are used for its norms under $g$ and $\tilde g$ respectively. For example, if $X$ is a vector field, then $|X|^2=<X, X>$ and
$|X|^2_{\tilde{g}}=<X, X>_{\td g}$.

Let us recall the transformation rules of those operators under the conformal change with a brief computation.
It is convenient to compute in a local coordinates $\{ x_i \}$ where $g=g_{i j} dx_i \otimes d x_j$ and $\pa_i \equiv \pa_{x_i}$. Let $u$ be a smooth function on $M$. Then
\be
\lab{nabchange}
|\nabla_{\td g} u |^2_{\td g} = \td g^{ij} \pa_i u \pa_j u= h^{-1} g^{ij} \pa_i u \pa_j u = h^{-1} | \nabla u |^2.
\ee Next, since $d \td g= \sqrt{ det \td g} dx_1...dx_n = h^{n/2} \sqrt{ det  g} dx_1...dx_n= h^{n/2} dg$, we have
\[
\al
\Delta_{\td g} &= \frac{1}{h^{n/2} \sqrt{ det  g}} \pa_i \left( h^{n/2} \sqrt{ det  g} \, h^{-1} g^{ij} \pa_j \right) \\
&=\frac{1}{h \sqrt{ det  g}} \pa_i \left(  \sqrt{ det  g} \, g^{ij} \pa_j \right)
+ \frac{n-2}{2} h^{-2} g^{ij} \pa_i h \pa_j.
\eal
\]Therefore,
\be
\lab{lapchange}
\Delta_{\td g} u  = h^{-1} \Delta u + \frac{n-2}{2} h^{-2} < \nb h, \nb u>.
\ee

Next we discuss the transformation of the Hessian of a function. The standard formula for the Hessian in the local coordinates is
\[
(Hess \, u)_{ij} = \pa_i \pa_j  u- \Gamma^k_{ij} \pa_k u
\]where
\[
\Gamma^k_{ij}= \frac{1}{2} g^{kl} (\pa_i g_{jl}+ \pa_j g_{li}- \pa_l g_{ij})
\]is the Christoffel symbol with respect to $g$. Likewise
\[
(Hess_{\td g} \, u)_{ij} = \pa_i \pa_j  u- \td \Gamma^k_{ij} \pa_k u
\]where
\[
\td \Gamma^k_{ij}= \frac{1}{2} \td g^{kl} (\pa_i \td g_{jl}+ \pa_j \td g_{li}- \pa_l \td g_{ij})
\]Since
\[
\td \Gamma^k_{ij}= \frac{1}{2} h^{-1} g^{kl} \left(\pa_i (h g_{jl})+ \pa_j (h g_{li})- \pa_l (h g_{ij}) \right),
\]we deduce
\[
\td \Gamma^k_{ij}= \Gamma^k_{ij} + \frac{1}{2} h^{-1} g^{kl} (\pa_i h g_{jl}+ \pa_j h g_{li}- \pa_l h g_{ij}).
\]Therefore, we have the transformation rule for the Hessian
\be
\lab{hesschange}
(Hess_{\td g} \, u)_{ij} = (Hess \, u)_{ij} - \underbrace{\frac{1}{2} h^{-1} g^{kl} (\pa_i h g_{jl}+ \pa_j h g_{li}- \pa_l h g_{ij}) \pa_k u}_{A_{ij}} \equiv (Hess \, u)_{ij} - A_{ij}.
\ee Since both $(Hess_{\td g} \, u)_{ij}$ and $(Hess \, u)_{ij}$ are $(2, 0)$ tensors, so is $A_{ij}$. In fact
\[
A_{ij} dx_i \otimes dx_j= \frac{1}{2} h^{-1} \left( dh \otimes du + du \otimes dh - <dh, du> g \right).
\]We will use this fact to facilitate computations in local orthonormal coordinates, which shows that there is an absolute constant $c_0$ such that
\be
\lab{normaij}
|(A_{ij})| \le c_0 h^{-1} |\nb h| \,| \nb u|.
\ee For the norms of the Hessians, we have, from \eqref{hesschange}, that
\[
\al
| Hess_{\td g} \, u |^2_{\td g}& = \td g^{ij} \td g^{kl} (Hess_{\td g} \, u)_{ik}
(Hess_{\td g} \, u)_{jl}\\
&= h^{-2}  g^{ij} g^{kl} [ (Hess \, u)_{ik} -A_{ik}] \, [ (Hess \, u)_{jl} -A_{il}].
\eal
\]This implies, for an absolute constant $c_2$, that
\be
\lab{hwan-h}
\left| h^2 | Hess_{\td g} \, u |^2_{\td g} - |Hess \, u |^2 \right | \le c_2  \, h^{-1} \, |\nb h| \,  |\nb u| \, |Hess \, u| + c_2  h^{-2} |\nb h|^2 \,  |\nb u|^2.
\ee

Next, we compute the transformation of the Bochner formula for the gradient norm of a function under $\td g$. Notice that the Laplacian is with respect to the original metric. So the following is not the usual Bochner formula.
\[
\al
\Delta ( |\nb_{\td g} u |^2_{\td g}) &= \Delta ( h^{-1} |\nb u |^2 ) = \Delta ( e^{- 2 f} |\nb u |^2 )\\
&= | \nb u|^2 \Delta e^{-2 f} + 2 <\nb | \nb u |^2, \nb e^{-2f}> + e^{-2f} \Delta |\nb u|^2.
\eal
\]Using the usual Bochner formula for $(M, g)$, we deduce
\be
\lab{boch1}
\al
\Delta ( |\nb_{\td g} u |^2_{\td g}) &=| \nb u|^2 (e^{-2f} 4 |\nb f|^2 - 2 e^{-2f} \Delta f) - 4 <\nb |\nb u|^2, \nb f> e^{-2f}\\
&\qquad + e^{-2f} \left( 2 | Hess \, u|^2 + 2 < \nb \Delta u, \nb u> + 2 Ric (\nb u, \nb u) \right).
\eal
\ee With the property that $Ric \ge -V g$ for the nonnegative function
\be
\lab{defVric-}
V=V(x)=|Ric^-(x)|,
\ee the above implies
\be
\lab{boch1>}
\al
e^{2 f} \Delta ( |\nb_{\td g} u |^2_{\td g}) & \ge | \nb u|^2 ( 4 |\nb f|^2 - 2  \Delta f - 2 V) - 4 <\nb |\nb u|^2, \nb f> \\
&\qquad +  2 | Hess \, u|^2 + 2 < \nb \Delta u, \nb u>.
\eal
\ee The first term on the right hand side is the most dangerous one. We will cancel it out by a suitable choice or $f$, to be done in the next step.

 Note that one can assume that $V$ is smooth without losing generality since one can first choose the function $V_1=|Ric^-(x)| + \epsilon$ and smooth out $V_1$ to a smooth function $V \ge |Ric^-(x)|$.
\medskip

{\it Step 2.} Determining $f$ and hence $h=e^{2f}$.

\noindent {\it Step 2.1.}  The equation for $f$.

Recall from Lemma 2.3 in \cite{TZq2} that $|Ric|^2$ has the a priori bound ($K_2$ bound): for a constant $\kappa_0$, depending only on the main parameters of the initial metric such that
\be
\lab{rick2}
\sup_{x \in M} \int_{M} \frac{|Ric(y)|^2}{d^{n-2}(x, y)} dg(y) \le \kappa_0.
\ee Here we have suppressed the time variable for simplicity.  Pick a number $r \in (0, diam( M, g)/2)$, by Cauchy-Schwarz inequality, we have
\be
\lab{rick1r}
\al
&\sup_{x \in M} \int_{B(x, r)} \frac{V(y)}{d^{n-2}(x, y)} dg(y)\\
&\le \sup_{x \in M} \int_{B(x, r)} \frac{|Ric(y)|}{d^{n-2}(x, y)} dg(y) \le \sqrt{\kappa_0}
\sup_{x \in M} \left(  \int_{B(x, r)} \frac{dg(y)}{d^{n-2}(x, y)}  \right)^{1/2} \le C_1 r \sqrt{\kappa_0}.
\eal
\ee Here $C_1$ depends only on the volume doubling constant. Recall from the $\kappa$ non-collapsing and non-inflating property for the normalized  K\"ahler Ricci flow, there exists a uniform constant $\kappa>0$, depending only on the initial metric such that
\[
\kappa r^n \le |B(x, r)| \le \kappa^{-1} r^n.
\]In other words, the volume of geodesic balls satisfy the Alfors regularity. Recall the embedding result in Lemma 2.4 in \cite{TZq2},

\begin{lemma}
\lab{leimbed}
 Let $V$ be a smooth function on $M$, $p$ be a point on $M$ and $r$ be a
positive number such that $r \le diam(M)/2$. Then for any smooth function $f$
on $M$, the following embedding result holds
\be
\al
&\int_{B(p, r)} |V(x)| f^2(x) dg(x) \\
&\le C \sup_{z \in B(p, 2r)} \int_{B(z, 2r)}
\frac{|V(x)|}{d(z, x)^{n-2}} dg(x) \left( \Vert \nb f \Vert^2_{L^2(B(p, 2r)}
+ r^{-2} \Vert  f \Vert^2_{L^2(B(p, 2r)} \right).
\eal
\ee Here $C$ is a positive constant depending only on the basic parameters of $g(0)$.
\end{lemma}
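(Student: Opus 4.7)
\medskip

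The inequality is a Stummel--Kato type embedding of Schrödinger quadratic forms into $H^1$, and the approach I would take combines the pointwise Green's function bounds of Basic Property 7(a) with a Fefferman--Phong-style self-improving integration by parts. The main obstacle is that the gradient $\nabla U$ of the relevant Newtonian potential is only controlled pointwise by the kernel $d^{-(n-1)}$, whose supremum is \emph{not} bounded by the Kato-type norm $\mathcal{K} := \sup_{z\in B(p,2r)} \int_{B(z,2r)} |V(x)|/d(z,x)^{n-2}\, dg(x)$ on the right-hand side; this is circumvented by bounding $\int \tilde f^2 |\nabla U|^2$ implicitly in terms of the quantity being estimated.

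\emph{Setup.} Pick a cutoff $\eta \in C_c^\infty(B(p,2r))$ with $\eta \equiv 1$ on $B(p,r)$ and $|\nabla \eta| \le C/r$, and set $\tilde f = \eta f$, so that
\[
\int_{B(p,r)} |V| f^2\, dg \le \int_M |V| \tilde f^2\, dg, \qquad |\nabla \tilde f|^2 \le 2|\nabla f|^2 + C r^{-2} f^2 \ \text{on } B(p,2r).
\]
Let $G$ be the Green's function on $(M,g)$ and define the Newtonian potential $U(y) := \int_{B(p,2r)} G(y,x) |V(x)|\, dg(x)$. Basic Property 7(a), together with the inclusion $B(p,2r) \subset B(y,4r)$ for $y \in B(p,2r)$ and volume doubling (Basic Properties 4, 5), gives $\|U\|_{L^\infty(M)} \le C\mathcal{K}$. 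On the closed manifold one has $-\Delta U = |V|\chi_{B(p,2r)} - \alpha$ with $\alpha := |M|^{-1}\int_{B(p,2r)} |V|\, dg$, and one integration by parts yields the identity
\[
Q := \int_M |V|\chi_{B(p,2r)} \tilde f^2\, dg = 2\int_M \tilde f\, \nabla U \cdot \nabla \tilde f\, dg + \alpha \int_M \tilde f^2\, dg.
\]
Bounding $d(p,\cdot)^{n-2} \le (2r)^{n-2}$ inside the Kato norm gives $\int_{B(p,2r)} |V|\, dg \le Cr^{n-2}\mathcal{K}$, and using the uniform two-sided bounds on $|M|$ and $\diam(M)$ from Basic Properties 2, 4, 5 yields the tail bound $\alpha \int \tilde f^2 \le C\mathcal{K} r^{-2} \|f\|_{L^2(B(p,2r))}^2$.

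\emph{Self-improvement and conclusion.} Writing $N := 2\int \tilde f\, \nabla U \cdot \nabla \tilde f$ and applying weighted Cauchy--Schwarz,
\[
|N| \le \epsilon \int |\nabla \tilde f|^2\, dg + \epsilon^{-1} A, \qquad A := \int \tilde f^2 |\nabla U|^2\, dg.
\]
Rather than controlling $A$ through a pointwise bound on $\nabla U$, I integrate $A$ itself by parts once more:
\[
A = \int \nabla U \cdot (\tilde f^2 \nabla U)\, dg = -\int U\, \nabla\!\cdot\!(\tilde f^2 \nabla U)\, dg = \int U|V|\chi\tilde f^2 - \alpha \int U\tilde f^2 - 2\int U\tilde f\, \nabla U \cdot \nabla \tilde f,
\]
so $A \le \|U\|_\infty (Q + \alpha\int \tilde f^2 + |N|) \le C\mathcal{K}(Q + \alpha\int \tilde f^2 + |N|)$. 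Substituting this into the Cauchy--Schwarz inequality for $|N|$, then using $Q \le |N| + \alpha\int \tilde f^2$, and choosing $\epsilon$ as a sufficiently large constant times $\mathcal{K}$, one absorbs the $Q$ and $|N|$ contributions to the left and obtains
\[
Q \le C\mathcal{K} \int |\nabla \tilde f|^2\, dg + C'\mathcal{K} r^{-2} \|f\|_{L^2(B(p,2r))}^2 \le C''\mathcal{K}\bigl(\|\nabla f\|_{L^2(B(p,2r))}^2 + r^{-2}\|f\|_{L^2(B(p,2r))}^2\bigr),
\]
after inserting the cutoff bound on $|\nabla \tilde f|^2$. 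The constants depend only on $n$ and the basic parameters, through the doubling and Green's function constants. The crux is exactly this closed-loop absorption, which sidesteps the unavailability of any Kato-type control on $\nabla U$.
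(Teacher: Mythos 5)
Your argument is correct in substance, but it takes a genuinely different route from the paper. The paper proves the lemma by duality: it tests $\int \sqrt{V} f\eta\, dg$ against an arbitrary $\eta\ge 0$ supported in $B(p,r)$, represents the cut-off function $f\phi$ through the Green's formula using the \emph{gradient} bound $|\nabla\Gamma(x,y)|\le C d(x,y)^{1-n}$ from Basic Property 7(a), estimates the resulting double integrals by Fubini and Cauchy--Schwarz together with the kernel-composition estimate $\int d(z,y)^{1-n} d(x,y)^{1-n}\, dg(y)\le C d(x,z)^{2-n}$ (a consequence of the two-sided volume bounds), handles the mean-value term exactly as you handle $\alpha$, and concludes by the Riesz theorem. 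You instead work directly with the quadratic form: you build the Newtonian potential $U$ of $|V|\chi_{B(p,2r)}$, use only the pointwise bound on $\Gamma$ itself (no gradient bound, no composition lemma), and close a self-improving absorption loop after two integrations by parts --- essentially the classical ``$L^\infty$ bound on the potential of a Kato-class function implies form boundedness'' argument. What the paper's route buys is a genuinely linear (dual) estimate with no absorption; what yours buys is economy of input, since the $d^{1-n}$ gradient bound on the Green's function is never invoked. Both give the stated constant structure.

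Two small repairs are needed in your write-up, neither of which is a structural gap. First, in the self-improvement step the inequality $A \le \Vert U\Vert_\infty\bigl(Q + \alpha\int\tilde f^2 + |N|\bigr)$ is not literally valid: $|N|$ is the absolute value of $2\int \tilde f\,\nabla U\cdot\nabla\tilde f$, whereas bounding $\bigl|\int U\,2\tilde f\,\nabla U\cdot\nabla\tilde f\bigr|$ by $\Vert U\Vert_\infty$ requires the integral of the absolute value, $\tilde N:=\int 2|\tilde f|\,|\nabla U|\,|\nabla\tilde f|\, dg$, which may exceed $|N|$ because of cancellation. Since $\tilde N$ obeys the same weighted Cauchy--Schwarz bound $\tilde N\le \epsilon\int|\nabla\tilde f|^2+\epsilon^{-1}A$ and also dominates $|N|$, replacing $|N|$ by $\tilde N$ throughout leaves the absorption scheme intact. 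Second, the claim $\Vert U\Vert_{L^\infty(M)}\le C\mathcal{K}$ is more than you justify (for $y$ outside $B(p,2r)$ the sup in $\mathcal{K}$ does not directly apply), but also more than you need: every occurrence of $U$ in your identities is multiplied by $\tilde f$ or $\tilde f^2$, so only $\Vert U\Vert_{L^\infty(B(p,2r))}\le C\mathcal{K}$ is used, and that follows by splitting the integral into $\{d(y,x)<2r\}$, covered by the definition of $\mathcal{K}$ with $z=y$, and the tail $\{d(y,x)\ge 2r\}$, covered by $\int_{B(p,2r)}|V|\,dg\le (2r)^{n-2}\mathcal{K}$.
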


Due to its importance to us, for completeness we provide a

\proof (of Lemma \ref{leimbed}).

With the gradient bound for the Green's function and volume lower
and upper bound given in the basic properties, the lemma and its proof are essentially
known in the literature. See \cite{Si:1} and \cite{CGL:1} for
example. The only difference is that we are dealing with compact
manifolds which create one extra term from the Green's formula.

Without loss of generality we assume $V \ge 0$.
Let $\phi$ be a smooth cut-off function in $B(p, 2r)$ such that $\phi =1$ on
$B(p, r)$ and that $|\nb \phi | \le C/r$. Then from the Green's formula, we have
\be
f\phi(x) - ave (f \phi) = - \int \Gamma(x, y) \Delta (f \phi)(y) dg(y)
\ee where $ave (f \phi)$ is the average of $f \phi$ over $M$, and $\Gamma$ is the
Green's function. After integration by parts, this becomes
\be
f\phi(x) = ave (f \phi) + \int \nb_y \Gamma(x, y) \nb (f \phi)(y) dg(y).
\ee By the bound for $\nb \Gamma$ in (\ref{GdGjie}), this infers
\be
|f \phi(x)| \le |ave (f \phi)| + \int \frac{C |\nb f| \phi(y)}{d(x, y)^{n-1}}  dg(y)
+ \int \frac{C | f \nb \phi(y)|}{d(x, y)^{n-1}}  dg(y).
\ee

For any smooth test function $\eta \ge 0$ supported in $B(p, r)$, we have, from the previous inequality,
\be
\lab{Vfi123}
\al
&\int \sqrt{V} f \eta(x) dg(x)  \\
&\le C \int \sqrt{V} \eta (x) \int \frac{ |\nb f| \phi(y)}{d(x, y)^{n-1}}  dg(y) dg(x) + C \int \sqrt{V} \eta (x) \int \frac{ | f \nb \phi(y)|}{d(x, y)^{n-1}}  dg(y) dg(x) \\
&\qquad  +  \int \sqrt{V} \eta (x) ave ( f \phi) dg(x)\\
&\equiv I_1 + I_2 +I_3. \eal
\ee
Following the argument in
\cite{Si:1} and \cite{CGL:1}, one can show that
\be \lab{Vfi1}
I^2_1 \le C \Vert \nb f \Vert^2_{L^2(B(p, 2r))} \Vert \eta
\Vert^2_{L^2(B(p, r)} \sup_{z \in B(p, 2r)} \int \frac{V(x)}{d(x,
z)^{n-2}} dg(x). \ee \be \lab{Vfi2} I^2_2 \le \frac{C}{r^2} \Vert
\nb f \Vert^2_{L^2(B(p, 2r))} \Vert \eta \Vert^2_{L^2(B(p, r)}
\sup_{z \in B(p, 2r)} \int \frac{V(x)}{d(x, z)^{n-2}} dg(x). \ee
The idea for estimating $I_1$ is to use the Fubini theorem first and Cauchy-Schwarz inequality second  and the Fubini theorem again after converting the $L^2$ norm of a function as a double integral. Here is the detail.
\[
\al
&I^2_1 \le C \Vert \nb f \Vert^2_{L^2(B(p, 2r))} \int_{B(p, 2r)} \left( \int_{B(p, 2r)}
  \frac{\sqrt{V(x)} \eta (x) }{d(x, y)^{n-1}} dg(x) \right)^2 dg(y)\\
&=C \Vert \nb f \Vert^2_{L^2(B(p, 2r))} \int_{B(p, 2r)} \int_{B(p, 2r)} \int_{B(p, 2r)}
  \frac{\sqrt{V(z)} \eta (z) }{d(z, y)^{n-1}}  \frac{\sqrt{V(x)} \eta (x) }{d(x, y)^{n-1}} dg(x) dg(z) dg(y)\\
&=C \Vert \nb f \Vert^2_{L^2(B(p, 2r))} \\
& \qquad \times \int_{B(p, 2r)} \int_{B(p, 2r)}
\sqrt{V(z)} \eta (z) \sqrt{V(x)} \eta (x) \int_{B(p, 2r)}
  \frac{1}{d(z, y)^{n-1} \, d(x, y)^{n-1}} dg(y) dg(x) dg(z).
\eal
\]Using basic properties 4 and 5 for volumes of geodesic balls, it is straight forward to deduce
\[
\int_{B(p, 2r)}
  \frac{1}{d(z, y)^{n-1} \, d(x, y)^{n-1}} dg(y) \le \frac{C}{d^{n-2}(x, z)}
\]where $C$ depends only on the diameter of $M$ and the volume non-collapsing and non-inflating constant. Combining the last two inequalities, we infer
\[
\al
&I^2_1 \le
C^2 \Vert \nb f \Vert^2_{L^2(B(p, 2r))}  \int_{B(p, 2r)} \int_{B(p, 2r)}
\frac{\sqrt{V(z)} \eta (z)}{d(x, z)^{(n-2)/2}}  \frac{\sqrt{V(x)} \eta (x)}{d(x, z)^{(n-2)/2}} dg(x) dg(z)\\
&\le
C^2 \Vert \nb f \Vert^2_{L^2(B(p, 2r))}  \left(\int_{B(p, 2r)} \int_{B(p, 2r)}
\frac{V(z) \eta^2 (z)}{d(x, z)^{(n-2)}} dg(x) dg(z) \right)^{1/2}\\
&\qquad \times
\left(\int_{B(p, 2r)} \int_{B(p, 2r)}
\frac{V(x) \eta^2 (x)}{d(x, z)^{(n-2)}} dg(x) dg(z) \right)^{1/2}
 \eal
\]This implies the estimate for $I_1$ in \eqref{Vfi1}  after exchanging the order of integration again. The bound for $I_2$ is easily obtained by Cauchy-Schwarz inequality since the singularity is cut off.

Next \be \al
|I_3| &\le \int \sqrt{V} |\eta(x)| \frac{1}{|M|} \int |f| \phi(y) dg(y) dg(x)\\
&\le \int \sqrt{V} |\eta(x)| \frac{diam(M)^{n-1}}{|M|} \int \frac{|f| \phi(y) dg(y)}{d(x, y)^{n-1}} dg(x)\\
&\le C \int \sqrt{V} |\eta(x)|  \int \frac{|f| \phi(y) dg(y)}{d(x, y)^{n-1}} dg(x),
\eal
\ee where we have used Perelman's bound on the diameter and volume non-collapsing property.  Just like the case for $I_1$, we can now deduce
\be
\lab{Vfi3}
|I_3|^2 \le C \Vert  f \Vert^2_{L^2(B(p, 2r))} \Vert \eta \Vert^2_{L^2(B(p, r)}
\sup_{z \in B(p, 2r)}
\int \frac{V(x)}{d(x, z)^{n-2}} dg(x).
\ee Substituting (\ref{Vfi1}), (\ref{Vfi2}) and (\ref{Vfi3}) into (\ref{Vfi123}),
we find that
\be
\al
&\int \sqrt{V} f \eta(x) dg(x) \\
&\le
C \left[\Vert \nb f \Vert_{L^2(B(p, 2r))}  + \frac{1+r}{r} \Vert  f \Vert_{L^2(B(p, 2r))} \right] \Vert \eta \Vert_{L^2(B(p, r))}
\left(\sup_{z \in B(p, 2r)}
\int \frac{V(x)}{d(x, z)^{n-2}} dg(x) \right)^{1/2}.
\eal
\ee Since $\eta$ is arbitrary, the result follows by applying the Riesz theorem.

\qed

This embedding, \eqref{rick1r} and a simple covering argument using balls with a small but definite radius, we deduce
\be
\lab{quadrav}
\int_M \left( |\nb \phi|^2-V \phi^2 \right) dg \ge - \rho_0 \int_M \phi^2 dg, \qquad \forall \phi \in C^\infty(M).
\ee Here $\rho_0 \ge 0 $ is a constant depending only on the main parameters of the initial metric. See also \eqref{vsubcr} below. In the above, we have used the fact that the diameters of $(M, g(t))$ are uniformly bounded due to Perelman c.f. \cite{ST:1}. In another word, the quadratic form on the left hand side is bounded from below.
Let us choose $\rho_0$ to be the optimal one such that \eqref{quadrav} holds, i.e.
\be
\lab{defrho0}
\rho_0= -\inf \{ \int_M \left( |\nb \phi|^2-V \phi^2 \right) dg \, | \, \phi \in C^\infty(M), \, \Vert \phi \Vert_{L^2(M)}=1 \}.
\ee The standard variation theory shows that there exists a smooth positive function $\phi$, $\Vert \phi \Vert_{L^2}=1$,  for which the inequality becomes equality and $\phi$ satisfies the following equation on $M$:
\be
\lab{eqphi}
\Delta \phi + V \phi - \rho_0 \phi =0.
\ee

Define
\be
\lab{deffphi}
f= \ln \phi.
\ee From \eqref{eqphi}, we see that $f$ satisfies the equation
\be
\lab{eqffphi}
\Delta f + |\nb f|^2 + V - \rho_0 = 0.
\ee Substituting this to the first term on the right hand side of
\eqref{boch1>}, we deduce a modified curvature dimension inequality
\be
\lab{modcdineq}
\al
e^{2 f} \Delta ( |\nb_{\td g} u |^2_{\td g})  \ge  2 | Hess \, u|^2 + 2 < \nb \Delta u, \nb u>   - 4 <\nb |\nb u|^2, \nb f> - 2 \rho_0 | \nb u|^2.
\eal
\ee Notice that the most dangerous term involving the Ricci curvature in the Bochner formula is eliminated.

Recalling that $h=e^{2 f}$, we have determined that
\be
\lab{defh=}
h=\phi^2.
\ee In order to proceed, we need to know more about the function $h$. This is done in the next sub-step.

\medskip
\noindent {\it Step 2.2.} Properties for $h=e^{2 f}=\phi^2$.

Although for each time slice, the function $\phi$ is smooth, what we really need are uniform regularities for $\phi$, which are independent of time. We present them in the next two propositions. Again we have suppressed the time variable.

\begin{proposition}
\lab{prphijie}
(a). There exist two positive constants $b_1, b_2$, depending only on the main parameters of the initial metric such that
\[
b_1 \le h(x)= \phi^2(x) \le b_2, \qquad \forall x \in M.
\]

(b). There exists and $b_3>0$,  depending only on the main parameters of the initial metric such that
\[
\Vert \phi \Vert_{C^{1/2}(M)} \le b_3.
\]

\lab{prhphi1}

\end{proposition}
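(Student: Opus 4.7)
For the upper bound, the plan is to run a Moser iteration on \eqref{eqphi}. Testing with $\phi^{2q-1}$ ($q \ge 1$) and integrating by parts gives
\[
\frac{2q-1}{q^2}\int_M |\nabla \phi^q|^2\,dg = \int_M V\phi^{2q}\,dg - \rho_0 \int_M \phi^{2q}\,dg.
\]
To absorb $\int V\phi^{2q}$ into the Dirichlet term, I would cover $M$ by finitely many balls $B(x_i, r)$ with $r$ small enough that the Kato bound $C_1 r\sqrt{\kappa_0}$ from \eqref{rick1r} falls below a prescribed threshold, then apply Lemma \ref{leimbed} on each piece and patch via a partition of unity. Combined with the Sobolev inequality (Property 6), this yields a standard Moser-type recursion $\|\phi\|_{L^{2q\sigma}} \le (Cq^2)^{1/(2q)}\|\phi\|_{L^{2q}}$ with $\sigma = n/(n-2)$; iterating from $\|\phi\|_{L^2} = 1$ produces a uniform $L^\infty$ upper bound depending only on the basic parameters. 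For the lower bound, I would invoke a uniform Harnack inequality for positive solutions of the Schr\"odinger equation with Kato-class potential (the $K_2$ control of $V$ provided by \eqref{rick2}--\eqref{rick1r} making all constants depend only on the basic parameters), and chain finitely many local Harnack estimates along a covering of the uniformly bounded-diameter manifold $M$ to obtain $\sup_M \phi \le C \inf_M \phi$. Since Cauchy--Schwarz yields $\sup_M \phi \ge |M|^{-1/2}\|\phi\|_{L^2} \ge c_0 > 0$, we conclude $\inf_M \phi \ge c_0/C > 0$. Squaring then gives the desired $b_1 \le h \le b_2$.

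\textbf{Part (b): H\"older estimate.} I would use the Green's function representation applied to \eqref{eqphi}. Writing
\[
\phi(x_1) - \phi(x_2) = \int_M [\Gamma(x_1, y) - \Gamma(x_2, y)](V(y) - \rho_0)\phi(y)\,dg(y)
\]
and setting $r = d(x_1, x_2)$, split the integral into the near region $N = B(x_1, 2r) \cup B(x_2, 2r)$ and its complement $F$. On $N$, the pointwise bound $|\Gamma(x_i, y)| \le C/d(x_i, y)^{n-2}$ together with the scale-linear estimate \eqref{rick1r} controls the contribution by $Cr$. On $F$, the mean value theorem together with $|\nabla \Gamma| \le C/d^{n-1}$ gives $|\Gamma(x_1, y) - \Gamma(x_2, y)| \le Cr/d(x_1, y)^{n-1}$; Cauchy--Schwarz using $\int_M V^2/d^{n-2}\,dg \le \kappa_0$ from \eqref{rick2} and $\int_{\{d \ge 2r\}} d^{-n}\,dg \le C\log(1/r)$ bounds the far contribution by $Cr(\log(1/r))^{1/2}$. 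Combining, $|\phi(x_1) - \phi(x_2)| \le Cr^{1/2}$ for $r$ small, and the $C^{1/2}$ bound on $h = \phi^2$ follows from the product rule and the uniform $L^\infty$ bound of part (a).

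\textbf{Main obstacle.} The decisive step is the Moser absorption in part (a). Because $V$ is only known to sit in a Kato-type class rather than in $L^{n/2+\varepsilon}$, the standard test-function bound on $\int V\phi^{2q}$ does not close without the scale-linear smallness $C_1 r\sqrt{\kappa_0}$ from \eqref{rick1r}; the iteration must be localized to small balls so that this smallness compensates for the low integrability of $V$. A secondary subtlety is verifying that all constants along the way (Harnack, Sobolev, Green's function estimates, Kato norm of $V$) depend only on the basic parameters so that the conclusion is uniform along the flow; this is built into the list of basic properties recalled in the introduction, and so uniformity is automatic.
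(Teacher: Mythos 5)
Your proposal is correct in substance, but it diverges from the paper at one key point. For the upper bound in (a) you do exactly what the paper does: Moser iteration on \eqref{eqphi}, with $\int V\phi^{2q}$ absorbed via Lemma \ref{leimbed} and the scale-linear Kato smallness \eqref{rick1r} (the paper packages this as the subcritical embedding \eqref{vsubcr} by choosing $r$ as a power of $\e$), and your remark that the localization to small balls is the decisive point is precisely the paper's point as well. For the lower bound, however, you invoke a uniform Harnack inequality for Schr\"odinger operators with Kato-class potential and chain it over a bounded-diameter cover; the paper explicitly considers this route, calls its proof ``more or less standard but tedious due to the presence of $V$,'' and instead gives a short direct argument: writing $f=\ln\phi$, the equation \eqref{eqffphi} gives $\Delta f\le \rho_0-V$, so testing the inequality for $f_-$ with $f_-$ itself, using the uniform Poincar\'e inequality to get an $L^2$ bound on $f_-$, and then Moser iteration yields $\Vert f_-\Vert_{L^\infty}\le C$, i.e.\ the lower bound, without ever needing a Kato-class Harnack inequality. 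Your route works (the needed Harnack is available from the cited literature \cite{CFG}, \cite{Si:1}, \cite{CGL:1} once one has the uniform Green's function, doubling and Poincar\'e constants from the basic properties), but it rests on a nontrivial black box whose uniform validity on the evolving manifolds you would have to verify, whereas the paper's energy argument is self-contained; your normalization step $\sup_M\phi\ge |M|^{-1/2}$ is fine since $|M|$ is uniformly bounded. For (b) both you and the paper use the Green's representation and a CFG-style near/far splitting; you split at the fixed scale $2d(x_1,x_2)$ and use Cauchy--Schwarz against $\int_{\{d\ge 2r\}}d^{-n}\,dg\lesssim\log(1/r)$ in the far field, obtaining $r(\log(1/r))^{1/2}$, while the paper splits at radius $Nd(x,x_0)$ and optimizes $N=d(x,x_0)^{-1/2}$ to get exactly $d^{1/2}$; these are the same method with different bookkeeping, and your version in fact gives a slightly stronger modulus of continuity, which of course implies the stated $C^{1/2}$ bound.
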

\proof (a). The upper bound is a consequence of equation \eqref{eqphi}, the normalization $\Vert \phi \Vert_{L^2}=1$ and Moser's iteration. Note that the embedding Lemma 2.4 in \cite{TZq2} (Lemma \ref{leimbed} here) and \eqref{rick1r} imply that $V$, regarded as a potential function, is dominated  by $\Delta$ in the following sense: there exists a fixed power $p* \ge 1$ and $C_* >0$, depending only on the basic parameters of the initial metric, such that, for every $\e>0$,
\be
\lab{vsubcr}
 \int_M  V  \eta^2 dg \le \e \int |\nb \eta|^2 dg + C_* \e^{-p_*} \int_M \eta^2 dg, \qquad \forall \eta \in C^\infty(M).
\ee This can be done by choosing $r$ in that lemma as a suitable power of $\e$. In addition, we also have uniform Sobolev embedding theorem from the basic properties listed in the introduction. So Moser's iteration can be carried out in a routine manner.

The lower bound can be obtained by first proving a Harnack inequality for $\phi$ and using the upper bound and the uniform boundedness of the diameters. The proof of the Harnack inequality is more or less standard but tedious due to the presence of $V$. So we give a direct proof as follows.

Since $h=e^{2 f}$, it is enough to prove that $f$ has a definite lower bound depending only on the basic parameters. By \eqref{eqffphi},
\[
\Delta f = - |\nb f|^2 - V + \rho_0  \le \rho_0- V.
\]Let $D$ be the domain where $f<0$ and $f_-=- \min\{f(x), 0 \}$. If $D$ is empty, then we are done with the proof since $f \ge 0$ on $M$. So we assume $D$ is not empty. Then
\be
\lab{eqf-}
- \Delta f_-(x) \le \rho_0- V(x), \qquad x \in D, \qquad f_-(x)=0, \quad x \in \pa D.
\ee Multiplying \eqref{eqf-} by $f_-$ and integrating by parts, we deduce
\[
\al
\int_M |\nb f_-|^2 dg &\le \int_{D} (\rho_0- V(x)) f_-(x) dg \le \Vert \rho_0 -V \Vert_{L^2(M)} \, \Vert f_- \Vert_{L^2(D)} \\
&\le C diam(M) \Vert \rho_0 -V \Vert_{L^2(M)} \, \Vert \nb f_- \Vert_{L^2(D)},
\eal
\]where we have used the uniform Poincar\'e inequality in the basic properties.
Therefore
\[
\Vert \nb f_- \Vert_{L^2(M)} \le C diam(M) \Vert \rho_0 -V \Vert_{L^2(M)}.
\]Then the  uniform Poincar\'e inequality in turn implies that
\[
\Vert  f_- \Vert_{L^2(M)} \le C^2 diam(M)^2 \, \Vert \rho_0 -V \Vert_{L^2(M)}.
\]With this $L^2$ bound in hand, we can use Moser's iteration as described in the previous paragraph on inequality \eqref{eqf-} to deduce that
\[
\Vert  f_- \Vert_{L^\infty(M)} \le C  \Vert  f_- \Vert_{L^2(M)} \le C^3 diam(M)^2 \, \Vert \rho_0 -V \Vert_{L^2(M)}.
\]Again the constant $C$ may have changed but it still only depends on the basic parameters. This proves the lower bound for $f$ and hence $\phi$.

(b). We use \eqref{eqphi} and the Green's function $\Gamma=\Gamma(x, y)$ to write
\[
\phi(x) = \int_M \Gamma(x, y) (V(y)-\rho_0) \phi(y) dg(y).
\]Then, one can use the bound for $\Gamma$ and $\nb \Gamma$ to derive that $\phi \in C^{1/2}$. We use an idea from \cite{CFG} p423-424.

Pick $x, x_0 \in M$ such that $d(x, x_0)<1/10^2$ and let $N>10$ be a large number to be determined later. Then
\be
\lab{phix-x0}
\al
&|\phi(x)-\phi(x_0)| \le \int_M |\Gamma(x, y)-\Gamma(x_0, y)| |V(y)-\rho_0| dg(y)\\
&=\int_{d(x_0, y)>N d(x, x_0)} ... dg(y)
+ \int_{d(x_0, y) \le N d(x, x_0)} ... dg(y) \equiv I_1 + I_2.
\eal
\ee In case ${d(x_0, y)>N d(x, x_0)}$, using the gradient bound for $\Gamma$ in the basic properties and the choice that $d(x_0, y) >> d(x_0, x)$, we have
\[
|\Gamma(x, y)-\Gamma(x_0, y)| \le  \frac{C d(x, x_0)}{d(x_0, y)^{n-1}} =
\frac{C d(x, x_0)}{d(x_0, y) \, d(x_0, y)^{n-2}}
\le  \frac{C}{N d(x_0, y)^{n-2}}.
\]Therefore
\be
\lab{i1c1k}
\al
I_1 &\le \frac{C}{N} \int_M \frac{|V(y)-\rho_0| }{d(x_0, y)^{n-2}} dg(y)
\le
\frac{C}{N} \left( \int_M \frac{|V(y)-\rho_0|^2 }{d(x_0, y)^{n-2}} dg(y) \right)^{1/2}
\, \left( \int_M \frac{1}{d(x_0, y)^{n-2}} dg(y) \right)^{1/2}\\
&\le C_1 \frac{C}{N} K^{1/2}(|V-\rho_0|^2).
\eal
\ee Here we have used the volume properties among the basic properties.

For $I_2$ we use the bound for $\Gamma$ in the basic properties to deduce
\[
\al
I_2 &\le C \int_{d(x_0, y) \le N d(x, x_0)}  \frac{|V(y)-\rho_0| }{d(x_0, y)^{n-2}} dg(y)
+ C \int_{d(x_0, y) \le N d(x, x_0)}  \frac{|V(y)-\rho_0| }{d(x, y)^{n-2}} dg(y)\\
& \le C \left( \int_{d(x_0, y) \le N d(x, x_0)} \frac{|V(y)-\rho_0|^2 }{d(x_0, y)^{n-2}} dg(y) \right)^{1/2}
\, \left( \int_{d(x_0, y) \le N d(x, x_0)} \frac{1}{d(x_0, y)^{n-2}} dg(y) \right)^{1/2}\\
&\qquad + C \left( \int_{d(x_0, y) \le N d(x, x_0)} \frac{|V(y)-\rho_0|^2 }{d(x, y)^{n-2}} dg(y) \right)^{1/2}
\, \left( \int_{d(x_0, y) \le N d(x, x_0)} \frac{1}{d(x, y)^{n-2}} dg(y) \right)^{1/2}.
\eal
\]By the volume properties again, we find, as in the case of \eqref{rick1r},
\be
\lab{i2c2k}
I_2 \le C_2 C K^{1/2}(|V-\rho_0|^2)  N d(x, x_0).
\ee
Substituting \eqref{i2c2k} and \eqref{i1c1k} into \eqref{phix-x0}, we see that
\[
|\phi(x)-\phi(x_0)| \le \frac{C_3}{N} + C_4 N d(x, x_0).
\]This implies, with $N=d(x, x_0)^{-1/2}$, that
\[
|\phi(x)-\phi(x_0)| \le C_5 d(x, x_0)^{1/2},
\]proving the proposition after renaming the constant.

 Here, all the constants only depend on the basic parameters of the original metric.

\qed

\begin{proposition}
\lab{prhphi2}
 For any $p \ge 10$, there exists $b_p>0$,
depending only on the basic parameters of the initial metric and $p$ such that
\[
\Vert \nb \phi \Vert_{L^p(M)} \le b_p.
\]
\end{proposition}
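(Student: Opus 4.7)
The proof combines the Green's function representation for $\phi$ with a Bochner--Moser iteration driven by Lemma \ref{leimbed}.

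First, from \eqref{eqphi} and the Green's formula,
\[
\nabla\phi(x)=\int_M \nabla_x\Gamma(x,y)\bigl(V(y)-\rho_0\bigr)\phi(y)\,dg(y).
\]
Using the gradient bound $|\nabla_x\Gamma(x,y)|\le C/d(x,y)^{n-1}$ from Property~7 and the uniform upper bound $\phi\le b_2$ of Proposition~\ref{prhphi1}(a), one has the pointwise estimate $|\nabla\phi(x)|\le C\int_M (V(y)+\rho_0)/d(x,y)^{n-1}\,dg(y)$. For the base case $p=2$, Fubini and the kernel composition bound $\int 1/\bigl(d(\cdot,y_1)^{n-1}d(\cdot,y_2)^{n-1}\bigr)\,dg\le C/d(y_1,y_2)^{n-2}$ (a consequence of Properties~4--5) give $\|\nabla\phi\|_{L^2}^2\le C\iint V(y_1)V(y_2)/d(y_1,y_2)^{n-2}\,dg(y_1)dg(y_2)$, which is then bounded by $C\|V\|_{L^1}\le C$ via the $L^1$-Kato estimate $\int V/d^{n-2}\,dg\le C$ (a Cauchy--Schwarz consequence of $K(V^2)\le\kappa_0$ and \eqref{rick1r}).

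For $p\ge 10$, I apply Bochner to $w=|\nabla\phi|^2$ together with $\Delta\phi=(\rho_0-V)\phi$ from \eqref{eqphi} and $\Ric\ge-V g$, which yields
\[
\tfrac12\Delta w\ge |\Hess\,\phi|^2+(\rho_0-2V)\,w-\phi\,\nabla V\cdot\nabla\phi.
\]
Multiplying by $w^{p/2-1}$, integrating on $M$, and integrating the $\nabla V$ term by parts while re-substituting $\Delta\phi=(\rho_0-V)\phi$ to eliminate higher-order derivatives of $\phi$, then applying Young's inequality, one obtains a Caccioppoli-type estimate
\[
\int_M |\nabla(w^{p/4})|^2\,dg\le C_p\int_M\bigl(V\,w^{p/2}+V^2\,w^{p/2-1}+w^{p/2}\bigr)\,dg+C_p.
\]
The $V$-term is absorbed into the left-hand side by Lemma~\ref{leimbed} with $\eta=w^{p/4}$, the small-Kato norm of $V$ on small balls from \eqref{rick1r} ensuring that the Dirichlet coefficient can be made as small as desired. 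The $V^2$-term is handled either by applying Lemma~\ref{leimbed} with $V^2$ as the potential (its Kato norm being bounded by $\kappa_0$) or by Cauchy--Schwarz using $V\in L^4$ (Property~10), which reduces it to a lower-order norm $\|\nabla\phi\|_{L^{2(p-2)}}^{p-2}$ available inductively. Combined with the uniform Sobolev inequality (Property~6) applied to $w^{p/4}=|\nabla\phi|^{p/2}$, this yields the Moser step
\[
\|\nabla\phi\|_{L^{pn/(n-2)}}^p\le C_p\bigl(\|\nabla\phi\|_{L^p}^p+\|\nabla\phi\|_{L^{2(p-2)}}^{p-2}\bigr)+C_p,
\]
and iteration from the $L^2$ base case (together with a short preliminary ascent via Calder\'on--Zygmund applied to $(V-\rho_0)\phi\in L^4$, furnishing $\nabla\phi\in L^{4n/(n-4)}$) reaches any $p\ge 10$ in finitely many steps.

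The main obstacle is the coupled $V^2\,w^{p/2-1}$ term: its control requires coordinating the base estimate with the iteration parameters so that the auxiliary lower-order norm $\|\nabla\phi\|_{L^{2(p-2)}}$ appearing at each step is already available from the preceding one. The threshold $p\ge 10$ in the statement reflects the need for a sufficiently high base exponent before this coupled induction can close in all relevant dimensions.
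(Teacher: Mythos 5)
Your overall strategy is the paper's: apply Bochner to $w=|\nabla\phi|^2$, test with powers of $w$, absorb the potential terms through the Kato-type embedding (Lemma \ref{leimbed} together with \eqref{rick1r} and $K(V^2)\le\kappa_0$), and run Sobolev--Moser iteration from an $L^2$ base (your Green's-representation base case replaces the paper's one-line argument of testing \eqref{eqphi} with $\phi$, but both are fine). Your Caccioppoli inequality and the treatment of the $V\,w^{p/2}$ term match the paper's $T_3$, and your first option for the $V^{2}w^{p/2-1}$ term --- the embedding with $V^2$ as potential, after raising $w^{p/2-1}$ to $w^{p/2}$ by Young at the cost of $\int_M V^2\,dg\le C$ --- is exactly how the paper handles its $T_1$, $T_2$ terms, and it closes the iteration with constants depending only on $p$ and the basic parameters.

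However, the branch you actually commit to in the displayed Moser step does not work. If the $V^2 w^{p/2-1}$ term is estimated by Cauchy--Schwarz using only $V\in L^4$, the step produces the norm $\Vert\nabla\phi\Vert_{L^{2(p-2)}}^{p-2}$, and this is \emph{not} lower order: $2(p-2)>pn/(n-2)$ as soon as $p>4(n-2)/(n-4)$, so for $n\ge 8$ (precisely the dimensions where this proposition is needed) the auxiliary exponent outruns the Sobolev gain and the induction stalls. Quantitatively, requiring $2(p-2)\le p_k$ at each stage caps the iteration at $p_k\approx 4n/(n-4)$, which equals $8$ for $n=8$ and decreases toward $4$ as $n$ grows; the same ceiling afflicts your ``preliminary ascent'' via Calder\'on--Zygmund from $(V-\rho_0)\phi\in L^4$ (which moreover presupposes uniform $W^{2,4}$ constants not among the listed basic properties). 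So no argument using only Property 10 can reach $p\ge 10$; the $K_2$ bound on $|{\Ric}|^2$ must enter, as in your first option and in the paper. Relatedly, your closing interpretation of the threshold is off: $p\ge 10$ is not forced by any coupling in the iteration --- the paper's scheme yields every $p$, and small $p$ follows from H\"older on the finite-volume manifold.
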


\proof
Denote $V-\rho_0$ by $W$. From \eqref{eqphi}, $\phi$ is a solution to the following equation.
\be
\lab{eqphi2}
\Delta \phi + W \phi =0.
\ee

First we show that
\be
\lab{phil2}
\Vert \nb \phi \Vert_{L^2(M)} \le b_2
\ee by using $\phi$ as a test function in the above equation, which yields
\[
\int_M |\nb \phi |^2 dg = \int_M (V-\rho_0) \phi^2 dg.
\]Then \eqref{phil2} follows from the embedding inequality \eqref{vsubcr} with $\e=1/2$ and Proposition \ref{prphijie} (a). For the higher integrability we will apply Moser's iteration on the equation for $|\nb \phi|^2$.

 By the Bochner formula
\be
\lab{bochphi}
\al
\Delta |\nb \phi|^2 &= 2 | Hess \, \phi |^2 + 2 < \nb \Delta \phi, \nb \phi> + 2 Ric(\nb \phi, \nb \phi)\\
&\ge - 2 < \nb  (W \phi), \nb \phi> - 2 V |\nb \phi|^2.
\eal
\ee Pick $p \ge 1$, using $(|\nb \phi|^2)^{2p+1}$ as a test function in the above inequality, we find
\[
\al
(2p+1) &\int_M < \nb |\nb \phi|^2, \nb |\nb \phi|^2>  (|\nb \phi|^2)^{2p} dg \\
&\le 2 \int_M  < \nb  (W \phi), \nb \phi> (|\nb \phi|^2)^{2p+1} dg + 2  \int_M V (|\nb \phi|^2)^{2p+2} dg.
\eal
\]After integration by parts again, the above implies
\be
\lab{caciodh}
\al
&\frac{2p+1}{(p+1)^2} \int_M |\nb (|\nb \phi|^2)^{p+1} |^2 dg\\
&\le -2 \int_M  (W \phi) \Delta \phi (|\nb \phi|^2)^{2p+1} dg
-2 \int_M W \phi <\nb \phi, \nb (|\nb \phi|^2)^{2p+1}> dg\\
&\qquad
+ 2  \int_M V (|\nb \phi|^2)^{2p+2} dg\\
&= \underbrace{2 \int_M  (W \phi)^2 (|\nb \phi|^2)^{2p+1} dg}_{T_1} \,
\underbrace{-2 (2p+1) \int_M W \phi <\nb \phi, \nb (|\nb \phi|^2)>  (|\nb \phi|^2)^{2p} dg}_{T_2}\\
&\qquad
+ \underbrace{ 2  \int_M V (|\nb \phi|^2)^{2p+2} dg}_{T_3}.
\eal
\ee In the following, we bound $T_3, T_1$ and $T_2$ respectively. Unless stated otherwise, all constants depend only on the main parameters of the initial metric.

From Lemma 2.4 in \cite{TZq2}, the second part, we have, $\forall \e>0$,
\[
\al
T_3 &\le \e \int_M V^2 (|\nb \phi|^2)^{2p+2} dg + \e^{-1} \int_M  (|\nb \phi|^2)^{2p+2} dg\\
&\le C_1 K(V^2) \e  \int_M  |\nb (|\nb \phi|^2)^{p+1}|^2 dg + (\e^{-1} + C_2 \e K(V^2)) \int_M  (|\nb \phi|^2)^{2p+2} dg,
\eal
\]where $K(V^2)$ is defined in item 9 of the basic properties in the introduction.
Choosing $\e =(2p+1)/[4(p+1)^2 C_1 K(V^2)]$, we deduce
\be
\lab{phit3<}
\al
T_3 &\le \frac{2p+1}{4(p+1)^2} \int_M |\nb (|\nb \phi|^2)^{p+1} |^2 dg
+ C_3 (1+p )K(V^2) \int_M  (|\nb \phi|^2)^{2p+2} dg.
\eal
\ee

For $T_1$, similarly, we have
\[
\al
T_1 &\le 4 \Vert \phi \Vert_\infty \int_M (V^2 + \rho^2_0) (|\nb \phi|^2)^{2p+1} dg\\
&\le 4 \Vert \phi \Vert_\infty \int_M (V^2 + \rho^2_0) [\e (|\nb \phi|^2)^{2p+2}
+ \e^{-2p-1}] dg\\
&\le C_1 K(V^2) \Vert \phi \Vert_\infty \e  \int_M  |\nb (|\nb \phi|^2)^{p+1}|^2 dg + \e [C_2 K(V^2) + 4 \rho_0^2] \Vert \phi \Vert_\infty \int_M  (|\nb \phi|^2)^{2p+2} dg \\
&\qquad + 4 \Vert \phi \Vert_\infty  \e^{-2p-1} \int_M (V^2 + \rho^2_0) dg.
\eal
\]This implies, with $\e =(2p+1)/[4(p+1)^2 C_1 \Vert \phi \Vert_\infty K(V^2)]$, that
\be
\lab{phit1<}
\al
T_1 &\le \frac{2p+1}{4(p+1)^2} \int_M |\nb (|\nb \phi|^2)^{p+1} |^2 dg\\
&\qquad + C_4 \left(1+\Vert \phi \Vert_\infty [(1+p )K(V^2) \Vert \phi \Vert_\infty ]^{2p+1} \right) \left( \int_M  (|\nb \phi|^2)^{2p+2} dg + 1 \right).
\eal
\ee

Finally,
\[
\al
|T_2| &\le 2 (2p+1) \Vert \phi \Vert_\infty \int_M |W|  \,  |\nb (|\nb \phi|^2)| \, (|\nb \phi|^2)^{2p+ (1/2)} dg\\
&=\frac{2 (2p+1)}{p+1} \Vert \phi \Vert_\infty \int_M |W| \,   |\nb (|\nb \phi|^2)^{p+1}| \, (|\nb \phi|^2)^{p+ (1/2)} dg\\
&\le \frac{2p+1}{8(p+1)^2} \int_M |\nb (|\nb \phi|^2)^{p+1} |^2 dg
+ 16 (2p+1) \Vert \phi \Vert_\infty^2 \int_M W^2 (|\nb \phi|^2)^{2p+1} dg.
\eal
\]The last term can be bounded as for $T_1$ with $\e$ suitably adjusted, giving us
\be
\lab{phit2<}
\al
T_2 &\le \frac{2p+1}{4(p+1)^2} \int_M |\nb (|\nb \phi|^2)^{p+1} |^2 dg\\
&\qquad + C_5 \left(1+\Vert \phi \Vert_\infty^2 [(1+p^2 )K(V^2) \Vert \phi \Vert_\infty^2 ]^{4p^2+2} \right) \left( \int_M  (|\nb \phi|^2)^{2p+2} dg + 1 \right).
\eal
\ee

Substituting \eqref{phit1<}, \eqref{phit2<} and \eqref{phit3<} into \eqref{caciodh}, we arrive at
\be
 \int_M |\nb F_p |^2 dg \le C_6 p  \left(1+\Vert \phi \Vert_\infty^2 [(1+p^2 )K(V^2) \Vert \phi \Vert_\infty^2 ]^{4p^2+2} \right) \left( \int_M  F_p^2 dg + 1 \right),
\ee where we have written
\[
F_p \equiv (|\nb \phi|^2)^{p+1}.
\]Let $\overline{F}_p = \int_M F_p dg$, then the uniform Sobolev embedding theorem in the basic properties implies
\[
\al
&\left(\int_M |F_p - \overline{F}_p|^{2n/(n-2)} dg \right)^{(n-2)/n} \le C \int_M |\nb F_p |^2 dg \\
&\le C
C_6 p  \left(1+\Vert \phi \Vert_\infty^2 [(1+p^2 )K(V^2) \Vert \phi \Vert_\infty^2 ]^{4p^2+2} \right) \left( \int_M  F_p^2 dg + 1 \right).
\eal
\]Hence
\be
\lab{mositF}
\al
&\left(\int_M F_p^{2n/(n-2)} dg \right)^{(n-2)/n} \\
&\le C
C_6 p  \left(1+\Vert \phi \Vert_\infty^2 [(1+p^2 )K(V^2) \Vert \phi \Vert_\infty^2 ]^{4p^2+2} \right) \left( \int_M  F_p^2 dg + 1 \right) + |M|^{(n-2)/n} \overline{F}_p.
\eal
\ee Since the power on $|\nb \phi|$ on  the left hand side is higher, from here, Moser's iteration and \eqref{phil2} imply that
\[
\Vert \nb \phi  \Vert_{L^p(M)} \le b_p.
\]Since $h=\phi^2$, this completes the proof of the proposition and parts (a), (b) of the theorem.
\qed

\medskip

{\it Step 3.}  Applying the conformal distance $\td r$ on \eqref{modcdineq}, the modified curvature dimension inequality.

According to \eqref{nabchange} and \eqref{lapchange},
\[
\al
&|\nb u|^2= h |\nb_{\td g} u |^2_{\td g}, \\
&  \Delta u = h \Delta_{\td g} u - \frac{n-2}{2} h^{-1} < \nb h, \nb u>=
h \Delta_{\td g} u - \frac{n-2}{2} <\nb_{\td g} h, \nb_{\td g} u>_{\td g}.
\eal
\]Substituting these and \eqref{hesschange} to the right hand side of \eqref{modcdineq} and computing at the center of an orthonormal frame with respect to $g$, we deduce
\be
\lab{modcdineq2}
\al
&\frac{1}{2} e^{2 f} \Delta ( |\nb_{\td g} u |^2_{\td g})  \\
&\ge   \sum^{n}_{i, j=1} | (Hess_{\td g}  \, u)_{i j}+ A_{ij}|^2
+ h \left< \nb_{\td g} \left(h \Delta_{\td g} u - \frac{n-2}{2} <\nb_{\td g} h, \nb_{\td g} u>_{\td g} \right), \nb_{\td g} u \right>_{\td g}  \\
 &\quad - 2 h \left<\nb_{\td g} (h |\nb_{\td g} u |^2_{\td g}), \nb_{\td g} f \right>_{\td g} -  \rho_0 h | \nb_{\td g} u|^2_{\td g}.
\eal
\ee This is main curvature dimension inequality we will work with in the rest of the section.

Fix a point $p_0 \in M$, for another point $x \in M$, we write
\be
\lab{rtilde}
\td r = d_{\td g} (p_0, x).
\ee If $x$ is not in the cut locus of $p_0$, then we can take $u=\td r$ in \eqref{modcdineq2} to obtain, since  $|\nb_{\td g} \td r |_{\td g}=1$, that
\be
\lab{rcdineq1}
\al
&0 \ge   \sum^{n}_{i, j=1} | (Hess_{\td g}  \, \td r)_{i j}+ A_{ij}|^2
+ h \left< \nb_{\td g} \left(h \Delta_{\td g} \td r - \frac{n-2}{2} <\nb_{\td g} h, \nb_{\td g} \td r >_{\td g} \right), \nb_{\td g} \td r \right>_{\td g}  \\
 &\quad - 2 h \left<\nb_{\td g} h, \nb_{\td g} f \right>_{\td g} -  \rho_0 h.
\eal
\ee Note that we can pick an orthonormal frame $\{ e_i \}$ with respect to $g$, whose center is $x$ and that $\pa_{\td r}$ direction is parallel to the $e_1$ direction.
Note that the frame is still orthogonal with respect to $\td g$ since $\td g = h g$.
After this arrangement, we can compute at $x$ to reach that
\[
\sum^{n}_{i, j=1} | (Hess_{\td g}  \, \td r)_{i j}+ A_{ij}|^2
= h^2 \sum^{n}_{i, j=1} h^{-2} | (Hess_{\td g}  \, \td r)_{i j}+ A_{ij}|^2
= h^2 |Hess_{\td g}  \, \td r + A|^2_{\td g},
\]where $A$ is the $(2, 0)$ tensor represented by $A_{ij}$. Therefore
\[
\al
\sum^{n}_{i, j=1} | (Hess_{\td g}  \, \td r)_{i j}+ A_{ij}|^2
&\ge \frac{h^2}{n-1} \left(\Delta_{\td g} \td r + tr_{\td g} A' \right)^2\\
&=\frac{h^2}{n-1} \left(\Delta_{\td g} \td r + h^{-1} \sum^{n}_{i=2} A_{ii} \right)^2
\eal,
\]where $A'$ is the tensor represented by the truncated matrix $(A_{ij})^{n}_{i, j=2}$ and $tr_{\td g} A'$ is its trace with respect to $\td g$. Substituting the above to
\eqref{rcdineq1} and noticing that
\[
2 h \left<\nb_{\td g} h, \nb_{\td g} f \right>_{\td g} = | \nb_{\td g} h |^2_{\td g},
\]we arrive at
\be
\lab{rcdineq2}
\al
&0 \ge \frac{1}{n-1} \left(\Delta_{\td g} \td r + \a \right)^2
+ h^{-1} \pa_{\td r} \left(h \Delta_{\td g} \td r - \frac{n-2}{2} \pa_{\td r} h \right) - h^{-2} | \nb_{\td g} h |^2_{\td g} -  \rho_0 h^{-1}, \\
&\a \equiv h^{-1} \sum^{n-1}_{i=1} A_{ii}.
\eal
\ee Since $A$ is a tensor, from \eqref{hesschange}, we can compute  at the center of an orthonormal coordinates with respect to $g$,
\[
A_{ii}= \frac{1}{2} h^{-1} g^{kl} (\pa_i h g_{il}+ \pa_i h g_{li}- \pa_l h g_{ii}) \pa_k \td r,
\]so that
\be
\lab{afajie}
|\a| \le  c h^{-1} |\nb h | \, |\nb \td r| = c  |\nb_{\td g} h |_{\td g} \, |\nb_{\td g} \td r|_{\td g}= c  |\nb_{\td g} h |_{\td g}.
\ee Here $c$ is a dimensional constant. In the above, we have used the relation
\be
\lab{nabgtdg}
|\nb \cdot |^2 = h |\nb_{\td g} \cdot |^2_{\td g}.
\ee

Let us introduce the function
\be
\lab{defpsi}
\psi= \Delta_{\td g} \td r - \frac{n-1}{\td r}.
\ee Then
\be
\lab{dra2}
\left(\Delta_{\td g} \td r + \a \right)^2=\left(\psi + \frac{n-1}{\td r} + \a \right)^2=
\psi^2 + 2 (\frac{n-1}{\td r} + \a) \psi +\left(\frac{n-1}{\td r} + \a \right)^2.
\ee Also
\be
\lab{h-1pr}
\al
&h^{-1} \pa_{\td r} \left(h \Delta_{\td g} \td r - \frac{n-2}{2} \pa_{\td r} h \right)
=h^{-1} \pa_{\td r} \left(h \left(\psi + \frac{n-1}{\td r}  - \frac{n-2}{2} h^{-1} \pa_{\td r} h \right) \right)\\
&= \pa_{\td r} \left(\psi + \frac{n-1}{\td r}  - \frac{n-2}{2} h^{-1} \pa_{\td r} h \right)
+h^{-1} \pa_{\td r} h \left(\psi + \frac{n-1}{\td r}   - \frac{n-2}{2} h^{-1} \pa_{\td r} h \right)\\
&= \pa_{\td r} (\psi - \beta) - \frac{n-1}{\td r^2}
+ \frac{2}{n-2} \beta \left(\psi + \frac{n-1}{\td r}   - \beta \right).
\eal
\ee Here
\be
\lab{defbeta}
\beta \equiv \frac{n-2}{2} h^{-1} \pa_{\td r} h.
\ee

Plugging \eqref{dra2} and \eqref{h-1pr} into \eqref{rcdineq2}, we find
\[
\al
&0 \ge \frac{1}{n-1} \left(\psi^2 + 2 (\frac{n-1}{\td r} + \a) \psi +\left(\frac{n-1}{\td r} + \a \right)^2 \right)
\\
&
\qquad +\pa_{\td r} (\psi - \beta) - \frac{n-1}{\td r^2}
+ \frac{2}{n-2} \beta  \left(\psi + \frac{n-1}{\td r}   -\beta  \right)
 - h^{-2} | \nb_{\td g} h |^2_{\td g} -  \rho_0 h^{-1}.
\eal
\] Therefore we can cancel the terms involving $1/\td r^2$ to reach
\[
\al
&0 \ge \frac{1}{n-1} \left(\psi^2 + 2 (\frac{n-1}{\td r} + \a) \psi + 2 \frac{n-1}{\td r} \a + \a^2 \right)
\\
&
\qquad +\pa_{\td r} (\psi - \beta)
+\frac{2}{n-2} \beta \left(\psi + \frac{n-1}{\td r}   - \beta \right)
 - h^{-2} | \nb_{\td g} h |^2_{\td g} -  \rho_0 h^{-1}.
\eal
\]Hence, we deduce a Riccati type inequality for $\psi$:
\be
\lab{ricatpsi}
\al
&\frac{1}{n-1} \psi^2 +  \pa_{\td r} (\psi - \beta)+ \frac{2 \psi}{\td r} + \left(\frac{2 \a }{n-1}  + \frac{2 \beta}{n-2} \right) \psi + \left( 2 \a + \frac{2(n-1) \beta}{n-2} \right) \frac{1}{\td r}  \\
&\le \frac{2 \beta^2}{n-2} + h^{-2} | \nb_{\td g} h |^2_{\td g} +  \rho_0 h^{-1}.
\eal
\ee
\medskip

{\it Step 4.}  Integral estimates for $\psi$.

In this step we follow the idea of \cite{PeWe1} to obtain a $L^q$ bound for $\psi_+$ with $q \in (n, 2 n)$. Due to the extra terms in \eqref{ricatpsi}, the computation depends on the $L^\infty$ bound for $h$ and the $L^p$ bound for $|\nb h|$ with large $p$ in Step 2.

Let us pick a number $q \in (n-2, 2n-2)$. Multiplying \eqref{ricatpsi} by $(\psi-\beta)^q_+ \sqrt{det \td g}$  and integrating along a geodesic starting from the point $p_0$ and ending at the cut-locus, we find
\be
\lab{psint1}
\al
& \frac{1}{n-1} \int^{r_*}_0 \psi^2 (\psi-\beta)^q_+ \sqrt{det \td g} d\td r + \underbrace{ \int^{r_*}_0 \pa_{\td r} (\psi - \beta) (\psi-\beta)^q_+ \sqrt{det \td g} d\td r}_{\equiv L_2} \\
&\qquad + \underbrace{\int^{r_*}_0 \frac{2 \psi}{\td r} (\psi-\beta)^q_+ \sqrt{det \td g} d\td r }_{\equiv L_3} + \int^{r_*}_0 \left(\frac{2 \a }{n-1}  + \frac{2 \beta}{n-2} \right) \psi  (\psi-\beta)^q_+ \sqrt{det \td g} d\td r \\
&\qquad + \int^{r_*}_0 \left( 2 \a + \frac{2(n-1) \beta}{n-2} \right) \frac{1}{\td r}  (\psi-\beta)^q_+ \sqrt{det \td g} d\td r \\
&\le \int^{r_*}_0 \left( \frac{2 \beta^2}{n-2} + h^{-2} | \nb_{\td g} h |^2_{\td g} +  \rho_0 h^{-1} \right) (\psi-\beta)^q_+ \sqrt{det \td g} d\td r.
\eal
\ee Here $r_*$ is the length of the geodesic from $p$ to the cut-locus, which also depends on the angular part of the geodesic regarded as the image of a line segment under the exponential map. In the above, $L_2$ is the worst term, which will be estimated first.

Notice that $(\psi-\beta)_+$ is Lipschitz in $\td r$. Hence
\[
L_2= \int^{r_*}_0 \pa_{\td r} (\psi - \beta)_+ (\psi-\beta)^q_+ \sqrt{det \td g} d\td r
= \frac{1}{q+1} \int^{r_*}_0 \pa_{\td r}  (\psi-\beta)^{q+1} _+ \sqrt{det \td g} d\td r.
\]After integration by parts, noticing that $\sqrt{det \td g}=O(\td r^{n-1})$ when $\td r$ is small, we deduce:
\[
\al
L_2 &= \frac{1}{q+1} (\psi-\beta)^{q+1} _+ \sqrt{det \td g} |_{\td r = r_*} -
\frac{1}{q+1} \int^{r_*}_0   (\psi-\beta)^{q+1} _+ \pa_{\td r} \sqrt{det \td g} d\td r\\
&\ge - \frac{1}{q+1} \int^{r_*}_0   (\psi-\beta)^{q+1} _+ \Delta_{\td g} \td r \sqrt{det \td g} d\td r.
\eal
\]Here we have used the standard formula outside of the cut-locus:
\[
\pa_{\td r}  \ln \sqrt{det \td g} = \Delta_{\td g} \td r.
\]Noticing that
\[
\Delta_{\td g} \td r = \psi + \frac{n-1}{\td r},
\]we find a lower bound for $L_2$:
\be
\lab{l2xiajie}
L_2 \ge - \frac{1}{q+1} \int^{r_*}_0   (\psi-\beta)^{q+1} _+  (\psi + \frac{n-1}{\td r}) \sqrt{det \td g} d\td r
\ee Substituting this to \eqref{psint1}, we find
\be
\lab{psint2}
\al
& \frac{1}{n-1} \int^{r_*}_0 \psi^2 (\psi-\beta)^q_+ \sqrt{det \td g} d\td r - \frac{1}{q+1} \int^{r_*}_0   (\psi-\beta)^{q+1} _+  \psi  \sqrt{det \td g} d\td r \\
&\qquad + \int^{r_*}_0 \left(\frac{2 \a }{n-1}  + \frac{2 \beta}{n-2} \right) \psi  (\psi-\beta)^q_+ \sqrt{det \td g} d\td r \\
&\qquad + \int^{r_*}_0 \left( 2 \a + \frac{2(n-1) \beta}{n-2} + 2 \beta \right) \frac{1}{\td r}  (\psi-\beta)^q_+ \sqrt{det \td g} d\td r \\
&\le \int^{r_*}_0 \left( \frac{2 \beta^2}{n-2} + h^{-2} | \nb_{\td g} h |^2_{\td g} +  \rho_0 h^{-1} \right) (\psi-\beta)^q_+ \sqrt{det \td g} d\td r.
\eal
\ee Here we have used $L_3$ to cancel part of $L_2$, i.e.
\[
\al
&L_3 - \frac{n-1}{q+1} \int^{r_*}_0   (\psi-\beta)^{q+1} _+  \frac{1}{\td r} \sqrt{det \td g} d\td r \\
&=\int^{r_*}_0 \frac{2 \psi}{\td r} (\psi-\beta)^q_+ \sqrt{det \td g} d\td r -\frac{n-1}{q+1} \int^{r_*}_0   (\psi-\beta)^{q+1} _+  \frac{1}{\td r} \sqrt{det \td g} d\td r \\
&=\int^{r_*}_0 \frac{2 (\psi-\beta)}{\td r} (\psi-\beta)^q_+ \sqrt{det \td g} d\td r -\frac{n-1}{q+1} \int^{r_*}_0   (\psi-\beta)^{q+1} _+  \frac{1}{\td r} \sqrt{det \td g}
 d\td r \\
 &\qquad +\int^{r_*}_0 \frac{2 \beta}{\td r} (\psi-\beta)^q_+ \sqrt{det \td g} d\td r \\
&=\left(2-\frac{n-1}{q+1} \right) \int^{r_*}_0  \int^{r_*}_0   (\psi-\beta)^{q+1} _+  \frac{1}{\td r} \sqrt{det \td g}
 d\td r
 +\int^{r_*}_0 \frac{2 \beta}{\td r} (\psi-\beta)^q_+ \sqrt{det \td g} d\td r \\
&
\ge \int^{r_*}_0 \frac{2 \beta}{\td r} (\psi-\beta)^q_+ \sqrt{det \td g} d\td r, \qquad \text{due to} \quad q>n-2.
\eal
\]The last term was added to the third line of \eqref{psint2}. Note the power on $\psi-\beta$ in the last term is one order lower, which is important for us later.

Recall that in the geodesic polar coordinates $\sqrt{det \td g}=\sqrt{det \td g}(\td r, \theta)$ with $\theta \in \mathbb{S}^{n-1}$. Integrating \eqref{psint2} on $\mathbb{S}^{n-1}$, we infer, since the cut-locus has measure $0$, that
\be
\lab{psint3}
\al
& \frac{1}{n-1} \int_M \psi^2 (\psi-\beta)^q_+  d\td g - \frac{1}{q+1} \int_M   \psi (\psi-\beta)^{q+1} _+   d\td g \\
&\qquad + \int_M \left(\frac{2 \a }{n-1}  + \frac{2 \beta}{n-2} \right) \psi  (\psi-\beta)^q_+  d\td g  \\
&\qquad + \int_M \left( 2 \a + \frac{2(n-1) \beta}{n-2} + 2 \beta \right) \frac{1}{\td r}  (\psi-\beta)^q_+  d\td g  \\
&\le \int_M \left( \frac{2 \beta^2}{n-2} + h^{-2} | \nb_{\td g} h |^2_{\td g} +  \rho_0 h^{-1} \right) (\psi-\beta)^q_+  d\td g.
\eal
\ee Note that $M$ can be replaced by any sectorial domains $B_\Lambda(p_0, r)$ originating from $p_0$. Here
\be
\lab{defbsector}
B_\Lambda(p_0, r)= \{ exp_{p_0}( t \theta) \, | \, 0 \le t <r, \, \theta \in \Lambda \subset \mathbb{S}^{n-1}  \}.
\ee Note $\psi=\psi-\beta+\beta$ so that
\[
\al
&\int_M \psi^2 (\psi-\beta)^q_+  d\td g = \int_M (\psi-\beta)^{q+2}_+  d\td g
+ 2 \int_M \beta (\psi-\beta)^{q+1}_+  d\td g + \int_M \beta^2 (\psi-\beta)^q_+  d\td g, \\
&\int_M \psi (\psi-\beta)^{q+1}_+  d\td g = \int_M (\psi-\beta)^{q+2}_+  d\td g
+ \int_M \beta (\psi-\beta)^{q+1}_+  d\td g.
\eal
\]These and \eqref{psint3} imply that
\be
\lab{psint3.1}
\al
& \left(\frac{1}{n-1} - \frac{1}{q+1} \right) \int_M  (\psi-\beta)^{q+2}_+  d\td g \\
&\le \int_M \underbrace{\left(\frac{1}{q+1}- \frac{2 \a +2 \beta }{n-1}  - \frac{2 \beta}{n-2} \right)}_{f_1}   (\psi-\beta)^{q+1}_+  d\td g  \\
&\qquad - \int_M \underbrace{\left( 2 \a + \frac{2(n-1) \beta}{n-2} + 2 \beta \right)}_{f_2} \frac{1}{\td r}  (\psi-\beta)^q_+  d\td g  \\
&\qquad + \int_M \underbrace{\left[ \left( \frac{2 \beta^2}{n-2} + h^{-2} | \nb_{\td g} h |^2_{\td g} +  \rho_0 h^{-1} \right) - \left(\frac{2 \a }{n-1}  + \frac{2 \beta}{n-2} \right) \beta \right]}_{f_3} (\psi-\beta)^q_+  d\td g\\
&\equiv R_1+ R_2 + R_3.
\eal
\ee Since $q>n-2$, the left hand side is positive. The main point is that the right hand side is dominated by the left hand side so that the $L^{q+2}(M)$ norm of $\psi_+$ is bounded. Let us check this claim term by term.

First, from \eqref{afajie} and \eqref{nabgtdg},
\[
|\a| \le  c  |\nb_{\td g} h |_{\td g} = c h^{-1} |\nb h|.
\]According to Propositions \ref{prphijie} and \ref{prhphi2}, for any $p \ge 10$, we have, since $h=\phi^2$,
\be
\lab{afalpmo}
\Vert \a \Vert_{L^p(M, \td g)} \le c \Vert h^{-1} |\nb h| \Vert_{L^p(M, \td g)}
=c \Vert h^{-1} |\nb h| h^{n/(2p)} \Vert_{L^p(M,  g)} \le \td b_p.
\ee Here  and later in this section $\td b_p$ is a constant that depends only on the main parameters of the initial metric and $p$, whose value may change from line to line. Likewise, from the definition of the function $\beta$ in \eqref{defbeta}, i.e.
\[
\beta = \frac{n-2}{2} h^{-1} \pa_{\td r} h,
\]we also have
\be
\lab{betalpmo}
\Vert \beta \Vert_{L^p(M, \td g)}  \le \td b_p.
\ee Therefore the three  braced functions in \eqref{psint3.1} satisfy
\be
\lab{lpnormf123}
\Vert f_i \Vert_{L^p(M, \td g)}  \le \td b_p, \quad i=1, 2, 3.
\ee  This implies, by H\"older inequality, that
\[
\al
&|R_1| \le \Vert (\psi-\beta)_+ \Vert^{q+1}_{L^{q+2}} \, \Vert f_1 \Vert_{L^{q+2}} \le
\td b_p \Vert (\psi-\beta)_+ \Vert^{q+1}_{L^{q+2}};\\
&|R_3| \le \Vert (\psi-\beta)_+ \Vert^{q}_{L^{q+2}} \, \Vert f_3 \Vert_{L^{(q+2)/2}} \le
\td b_p \Vert (\psi-\beta)_+ \Vert^{q}_{L^{q+2}}.
\eal
\]
\[
\al
|R_2| &\le \left( \int_M (\psi-\beta)^{q+2}_+ d \td g \right)^{q/(q+2)} \,
\left( \int_M \frac{|f_2|^{(q+2)/2}}{\td r^{(q+2)/2}} d \td g \right)^{2/(q+2)}\\
&\le \Vert (\psi-\beta)_+ \Vert^{q}_{L^{q+2}} \, \Vert 1/\td r \Vert_{L^{(q+2)(1+\e)/2}} \, \Vert f_2 \Vert_{L^{(q+2)(1+\e)/(2 \e)}}.
\eal
\]Fixing a sufficiently small $\e>0$, we have
\[
(q+2)(1+\e)/2<n
\]since $q<2n-2$ by assumption. Thus
\[
\Vert 1/\td r \Vert_{L^{(q+2)(1+\e)/2}} \le c_n
\]which is a constant depending only on the volume part in the basic properties, the diameter of $M$, $\e$, and $q$. Therefore
\[
|R_2| \le \td b_p \Vert (\psi-\beta)_+ \Vert^{q}_{L^{q+2}}.
\]Substituting the preceding bounds on $|R_i|$, $i=1, 2, 3$ into
\eqref{psint3.1}, we see that
\[
\int_{\{ \psi \ge 0 \}}   (\psi-\beta)^{q+2}_+  d\td g \le \int_M  (\psi-\beta)^{q+2}_+  d\td g  \le \td b_q.
\]In case $\psi(x) \ge 0$ and $\psi(x) \ge \beta(x)$, we have
\[
\psi^{q+2}(x) = (\psi(x)-\beta(x) + \beta(x))^{q+2} \le 2^{q+2} \left[ (\psi(x)-\beta(x))^{q+2}_+
+|\beta(x)|^{q+2} \right].
\]In case $\psi(x) \ge 0$ but $\psi(x) \le \beta(x)$, then
\[
\psi^{q+2}(x) \le
\beta^{q+2}(x) =|\beta(x)|^{q+2}.
\]These infer
\[
\int_M \psi^{q+2}_+ d \td g \le c_q \int_{\{ \psi \ge 0 \}}   (\psi-\beta)^{q+2}_+  d\td g +
c_q \int_M |\beta(x)|^{q+2} d \td g \le \td b_q.
\] The proof of the theorem is done by renaming $q+2$ as $q$. \qed

{\remark
\lab{rmth1.2ball}
One can also replace $M$ in the integrals starting from \eqref{psint3} by any proper ball $B(x_0, r, \td g)$ and  obtain the local $L^q(B(x_0, r, \td g))$ norm for $\psi_+$ which also depends on $r$.
}

\medskip

\proof (of Corollary \ref{volcomp})

(a). Let us recall the well known inequality for any Riemmann manifold $(M, g)$:
 \be
\label{differential of volume ratio}
\frac{d}{dr}\left(\frac{|B(x,r)|}{r^n}\right)\leq \frac{1}{r^{n+1}}\int_{\mb{S}^{n-1}}\int_0^r s\,\psi_+ \, \sqrt{det g(s,\theta)} dsd\theta,
\ee where $\psi_+ = (\Delta r - \frac{n-1}{r})_+$.  See Lemma 2.3 in \cite{ZqZm:1} for a quick proof e.g.
Applying this to $\td g$ and using the bound for $\psi_+$ in Theorem \ref{thplapc}, we deduce, for $q \in (n, 2n)$, that
\[
\al
\frac{d}{dr}\left(\frac{|B(x,r, \td g)|_{\td g}}{r^n}\right) &\leq \frac{1}{r^n} \int_{B(x, r, \td g)} \psi_+ \, d \td g
\le \frac{1}{r^n} \Vert \psi_+ \Vert_{L^q}
\, |B(x, r, \td g)|^{(q-1)/q} \\
&\le \frac{\td b_q}{r^n}
\, (r^n/\kappa)^{(q-1)/q} \le \frac{\td b_q}{r^{n/q}}.
\eal
\]Here we have used the $\kappa$ non-inflating property in basic property 5 and renamed
the constant $\td b_q$.

Similarly
\[
\frac{|\partial B(x, r_2, \td g)|_{\td g}}{r^{n-1}_2} -
\frac{|\partial B(x, r_1, \td g)|_{\td g}}{r^{n-1}_1}
 \le \int_{B(x, r_2, \td g)\backslash B(x, r_1, \td g)}  \frac{\psi_+}{r^{n-1}} d\td g
 \le \td b_q \left(r^{(q-n)/q}_2 - r^{(q-n)/q}_1 \right).
\]

(b). This follows from part (a) and the $C^{1/2}$ bound for $h$.
Indeed, by Theorem \ref{thplapc} (a), there exists a positive constant $\alpha>1$, depending only on $b_1$ and $b_2$, such that
\[
B(x, r) \subset B(x, \a r, \td g) \subset B(x, \a^2 r), \quad r>0.
\]Given $r_2>0$, write $h_2 = \sup_{B(x, \a^2 r_2)} h$ and $h_1 = inf_{B(x, \a^2 r_2)} h$.
Using the distance formula for geodesics, it is easy to see that, for any $r \in (0, r_2]$, we have
\[
B(x, r) \subset B(x, \sqrt{h_2} \,  r, \td g), \quad B(x, \sqrt{h_1} \,  r, \td g) \subset B(x, r).
\]Using these and part (a) of the corollary, we obtain
for $r_1 \in (0, r_2]$, that
\[
\al
&\frac{|B(x, r_2)|}{r^n_2} \le \frac{|B(x, \sqrt{h_2} r_2, \td g)|}{r^n_2} =
\frac{|B(x, \sqrt{h_2} r_2, \td g)|_{h g/h}}{r^n_2}
\le \frac{|B(x, \sqrt{h_2} r_2, \td g)|_{h g/h_1}}{r^n_2}\\
&=\frac{|B(x, \sqrt{h_2} r_2, \td g)|_{\td g}}{(\sqrt{h_1} r_2)^n}
  = \left(\frac{h_2}{h_1} \right)^{n/2} \frac{|B(x, \sqrt{h_2} r_2, \td g)|_{\td g}}{(\sqrt{h_2} r_2)^n}\\
 &\le \left(\frac{h_2}{h_1} \right)^{n/2} \left[\frac{|B(x, \sqrt{h_1} r_1, \td g)|_{\td g}}{(\sqrt{h_1} r_1)^n} + (1-n/q)^{-1} \td b_q (\sqrt{h_2} r_2)^{1-n/q} \right]\\
 &\le \left(\frac{h_2}{h_1} \right)^{n/2} \left[\frac{|B(x, r_1)|_{h g}}{(\sqrt{h_1} r_1)^n} + (1-n/q)^{-1} \td b_q (\sqrt{h_2} r_2)^{1-n/q} \right]\\
 &\le \left(\frac{h_2}{h_1} \right)^{n/2} \left[\frac{|B(x, r_1)|_{h_2 g}}{(\sqrt{h_1} r_1)^n} + (1-n/q)^{-1} \td b_q (\sqrt{h_2} r_2)^{1-n/q} \right].
\eal
\]Therefore
\[
\al
&\frac{|B(x, r_2)|}{r^n_2} \le \left(\frac{h_2}{h_1} \right)^{n/2} \left[ \left(\frac{h_2}{h_1} \right)^{n/2} \frac{|B(x, r_1)|}{ r_1^n} + (1-n/q)^{-1} \td b_q (\sqrt{h_2} r_2)^{1-n/q} \right].
\eal
\]Since
\[
|(|h_2/h_1)-1| \le c \sqrt{r_2}
\]due to part (b) of the theorem, we conclude, since $q<2n$ and $r_2$ is bounded, that
\[
\frac{|B(x, r_2)|}{r^n_2}  - \frac{|B(x, r_1)|}{ r_1^n}  \le b_q r_2^{1-n/q}.
\]Here the constant $b_q$ is renamed.
\qed

\section{Proof of Theorem \ref{thhtconj}, convergence
 }

 This section is divided into two subsections.

 \subsection{Preliminaries}

Before starting the proper proof, we need to collect or prove a number of properties which hold uniformly for each time slice of the KRF.

We fix a time slice of the KRF: $(M, g(t))$ and denote
\be
\lab{gtgwanh2f}
g=g(t),  \, \td g= h g, \,   h=h(x, t)=e^{2 f(x, t)},
\ee which is the conformal function determined from Theorem \ref{thplapc} for $g(t)$. If no confusion arises, we will omit $t$. All constants in this section depend only on the basic parameters of the initial metric $g(0)$, unless stated otherwise.

The following properties of $(M, \td g)$ will be used later in the section.

(1). $diam (M, \td g) \le C.$

(2). $|B(x, r, \td g)|_{\td g} \ge \kappa r^n$,  $r \in (0, diam (M, \td g))$.

(3). $
|B(x, r, \td g)|_{\td g} \le \kappa^{-1}  r^n
$ for all $r>0$.

(4). There exists a uniform constant $S_2$ so that the following $L^2$ Sobolev inequality holds: \\
 \[
 \left( \int_{M}  v^{2n/(n-2)} d \td g \right)^{(n-2)/n} \le S_2 \left( \int_{M} | \nabla_{\td g} v |^2_{\td g}
 d\td g +  \int_{M} v^2
 d\td g \right)
\]for all $ v \in C^\infty(M, \td g)$.

(5). uniform $L^2$ Poincar\'e inequality: for any $v \in
C^\infty(B(x, r, \td g))$ where $B(x, r, \td g)$ is a proper ball in $({M},
\td g)$, there is a uniform constant $C$ such that
 \be \lab{l2PIwan}
\int_{B(x, r, \td g)} |v-  \td v_B |^2 d\td g \le C r^2 \int_{B(x, r, \td g)} |\nb_{\td g} v|^2 d\td g.
\ee Here  $\td v_B$
is the average of $v$ in $B(x, r, \td g)$ under $\td g$.

Properties (1)-(4) are direct consequences of the corresponding basic properties for $(M, g(t))$ and the uniform $C^{1/2}$ bound for the conformal factor $h$ since $\td g = h g(t)$. Property (5) can be obtained in the following way. From (1)-(4), it is standard to derive the Gaussian upper bound for the heat kernel on $(M, \td g)$. Then, using the gradient bound in Proposition \ref{pr3.2}, one can derive a Gaussian lower bound for the heat kernel. Namely,
  let $p=p(x, s, y)$ be the (stationary) heat kernel for
$({M}, \td g(t))$.  There exist positive constants $a_1$ and $a_2$,
depending only on the basic parameters of $g(0)$ such that

\be
\lab{gulbwan}
 \frac{a_1}{s^{n/2}} e^{-a_2
d(x, y, \td g)^2/s} \le p(x, s, y) \le \frac{1}{a_1 s^{n/2}} e^{- d(x,
y, \td g)^2/(a_2 s)}, \qquad s \in (0, 1]; \ee

 A combination of the bounds yield a Harnack inequality and the Poincar\'e inequality in (5), c.f. Sec. 6 \cite{Gri} e.g. Alternatively, one can use the segment inequality below.

(6). Weak Harnack inequality for Poisson equations

Let $f$ be a nonnegative solution to the equation on $(M, \td g)$:
\[
\Delta_{\td g} f \le \xi, \quad \xi \in L^q(M, \td g),  \, q>n/2.
\]Then, for $r \in (0, diam(M, \td g)/2]$, $x \in M$,
\be
\lab{wharnakpe}
\oint_{B(x, r, \td g)} f d \td g \le C (f(x) + r^{2-(n/q)} \, \Vert \xi \Vert_{L^q(B(x, r, \td g))}).
\ee The proof is the same as Corollary 2.16 in \cite{TZz:1} using \eqref{gulbwan} and (2).

(7). Ricci curvature for $\td g$.

Recall the formula, c.f. \cite{Be:1} p59, e.g.,
\be
\lab{ricfwan}
Ric_{\td g}=Ric - (n-2) Hess \, f + (n-2) df \otimes df - \Delta f \, g - (n-2) |\nb f|^2 g.
\ee Note the sign of $\Delta f$ is minus while the plus sign in that book was a typo. By  \eqref{eqffphi}, the equation for $f$,
\[
\Delta f = - |\nb f|^2 - V + \rho_0.
\]Hence
\be
\lab{ricfwan2}
Ric_{\td g}=Ric - (n-2) Hess \, f + (n-2) df \otimes df +V \, g - (n-3) |\nb f|^2 g- \rho_0 g.
\ee By our choice that $V=|Ric^-|$, this implies a lower bound for $Ric_{\td g}$:
\be
\lab{ricfwan3}
Ric_{\td g} \ge  - (n-2) Hess \, f - (n-3) |\nb f|^2 g - \rho_0 g.
\ee By \eqref{hesschange}
\be
Hess \, f =  Hess_{\td g} \, f + A
\ee where $A$ is the tensor given by
\be
A_{ij}=\frac{1}{2} f^{-1} g^{kl} (\pa_i f g_{jl}+ \pa_j f g_{li}- \pa_l f g_{ij}) \pa_k h.
\ee Therefore, for some uniform constant $c>0$, we have
\be
\lab{ricfwan4}
Ric_{\td g} \ge  - (n-2) Hess_{\td g} \, f -c |\nb_{\td g} h|^2_{\td g} \,  \td g- c \rho_0 \td g.
\ee

(8). additional a priori bounds for $Ric_{\td g}$ and $Rm_{\td g}$.

\begin{proposition}
\lab{pr3.00}
Let $(M, \td g)$ be given by \eqref{gtgwanh2f} at the beginning of the section.
There exists a positive constant $A_0$ depending only on the basic parameters of $g(0)$ such that
\be
\lab{ricwanma}
\int_{B(x, r, \td g)} |\Ric_{\td g}|^2_{\td g} d\td g \le A_0 r^{n-2}, \quad r \in (0, 1]; \quad \int_M \frac{|Ric_{\td g}(y)|^2}{d_{\td g}(x, y)^{n-2}} d\td g(y) \le A_0;
\ee
\be
\lab{rml2wan}
\int_M |Rm_{\td g}|^2_{\td g} d \td g \le A_0 \int_M |Rm|^2 d g + A_0.
\ee
\end{proposition}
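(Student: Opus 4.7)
The plan is to use the conformal change formulas for the Ricci and Riemann curvatures (equation~\eqref{ricfwan2} and its curvature-tensor analogue) to reduce everything to integrals on $(M,g)$, and then invoke the basic properties of the KRF together with Propositions~\ref{prhphi1}--\ref{prhphi2}. Since $b_1 \le h \le b_2$, the metrics $g$ and $\td g$ are uniformly bi-Lipschitz, so distances, volume elements and tensor norms are all comparable up to harmless constants; in particular it suffices to work throughout with $g$-distances, $g$-volumes and $g$-norms. From \eqref{ricfwan2} together with $V=|\Ric^-|\le|\Ric|$ one has the pointwise bound
\[
|\Ric_{\td g}|^2_g \le C\bigl(|\Ric|^2 + |\Hess f|^2 + |\nb f|^4 + 1\bigr).
\]

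For the Kato bound in \eqref{ricwanma}, each of the four summands is handled separately. The $|\Ric|^2$ contribution is exactly basic property~9, and $V^2 \le |\Ric|^2$ is subsumed in it. The constant term is controlled by the uniform bound $\sup_x \int_M d(x,y)^{-(n-2)} dg(y) < \infty$ coming from the volume doubling and upper bounds (basic properties 4--5). For $|\nb f|^4$, since $\phi$ is bounded away from zero one has $|\nb f| \le C|\nb \phi|$, and H\"older's inequality with conjugate exponents $q>n/2$ (so that $q'<n/(n-2)$) together with Proposition~\ref{prhphi2} gives the desired bound. The Morrey bound in the first half of \eqref{ricwanma} is then weaker than the Kato bound and follows from it at once: on $B(x,r,g)$ one has $r^{n-2}/d(x,y)^{n-2} \ge 1$, so
\[
\int_{B(x,r)} |\Ric_{\td g}|^2 \, dg \le r^{n-2}\int_{B(x,r)} \frac{|\Ric_{\td g}|^2}{d(x,y)^{n-2}} \, dg \le A_0 r^{n-2}.
\]

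The main obstacle is the Kato bound on $|\Hess f|^2$. Writing $f=\ln\phi$ gives $\Hess f = \Hess\phi/\phi - df\otimes df$; since the $|\nb f|^4$ piece has already been handled, the task is to control $\int_M |\Hess\phi|^2/d(x,y)^{n-2} \, dg(y)$. I would follow the pattern of the proof of basic property~9 in \cite{TZq2}: apply the Bochner formula
\[
\tfrac{1}{2}\Delta|\nb\phi|^2 = |\Hess\phi|^2 + \langle\nb\Delta\phi,\nb\phi\rangle + \Ric(\nb\phi,\nb\phi),
\]
test against the Green's function $\Gamma(x,\cdot)$ (which satisfies $|\Gamma| \le C/d^{n-2}$ and $|\nb\Gamma|\le C/d^{n-1}$ by basic property~7), and integrate by parts twice. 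The top-order term collapses to $-\tfrac{1}{2}|\nb\phi|^2(x) + \tfrac{1}{2}\,\text{average}(|\nb\phi|^2)$, which is bounded above uniformly in $x$. Using the equation $\Delta\phi = (\rho_0-V)\phi$ and the $L^\infty$ bound on $\phi$, the cross term $\langle\nb\Delta\phi,\nb\phi\rangle$ is recast, after one more integration by parts, as $\int \Gamma (\Delta\phi)^2$ (bounded by $K_2(V^2)+1$, hence finite by basic property~9) plus error terms handled by the Kato embedding Lemma~\ref{leimbed} combined with the high $L^p$ integrability of $\nb\phi$ from Proposition~\ref{prhphi2}. Finally $\Ric(\nb\phi,\nb\phi)$ is dominated by $|\Ric||\nb\phi|^2$ and split via Cauchy--Schwarz into $(K_2(|\Ric|^2))^{1/2}\cdot(K_2(|\nb\phi|^4))^{1/2}$, both already controlled.

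For \eqref{rml2wan}, the conformal transformation formula expresses $\Rm_{\td g}$ as $\Rm$ plus a tensor linear in $\Hess f$, $df\otimes df$ and $|\nb f|^2 g$. Combining the $(0,4)$-tensor scaling $|\cdot|^2_{\td g} = h^{-4}|\cdot|^2_g$ with $d\td g = h^{n/2}dg$ and $b_1 \le h \le b_2$ yields
\[
\int_M|\Rm_{\td g}|^2_{\td g} \, d\td g \le C\int_M \bigl(|\Rm|^2 + |\Hess f|^2 + |\nb f|^4\bigr) dg.
\]
Here only an \emph{unweighted} $L^1$ bound on $|\Hess f|^2$ is needed, which follows from the global Bochner identity
\[
\int_M|\Hess f|^2 \, dg = \int_M(\Delta f)^2 \, dg - \int_M \Ric(\nb f,\nb f) \, dg,
\]
using $\Delta f = -|\nb f|^2 - V + \rho_0$, basic property~10 ($|\Ric|\in L^4$), and Cauchy--Schwarz with Proposition~\ref{prhphi2}. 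The $|\nb f|^4$ integral is bounded directly by Proposition~\ref{prhphi2}, closing the estimate.
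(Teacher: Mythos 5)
Your overall strategy for \eqref{rml2wan} and for the easy pieces of \eqref{ricwanma} is sound, and in places cleaner than the paper: the global identity $\int_M|\Hess f|^2\,dg=\int_M(\Delta f)^2\,dg-\int_M\Ric(\nb f,\nb f)\,dg$ together with the equation for $f$, basic property 10 and Proposition \ref{prhphi2} does give \eqref{rml2wan}; the $|\nb f|^4$ and $V^2$ contributions to the Kato integral are indeed handled by H\"older with $q>n/2$ and by \eqref{defkato}; and deducing the Morrey bound from the Kato bound is logically fine. Note, however, that this is not the paper's route: the paper never attempts a Kato estimate for $|\Hess f|^2$ by pairing the Bochner formula with the Green's function. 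It instead proves a \emph{local} Caccioppoli-type estimate with a cut-off at scale $r$, using the equation $\Delta f=-|\nb f|^2-V+\rho_0$, the $L^q$ bounds on $\nb f$ for all finite $q$, and the Morrey bound $\int_{B(x,r)}|\Ric|^2\,dg\le A_0r^{n-2}$, obtaining \eqref{hessfm2}, i.e. $\int_{B(x,r)}|\Hess f|^2\,dg\le C_\e r^{n-2-\e}$, and then \eqref{hessfm1}; the proposition is built from these Morrey-type bounds. Your shortcut ``Morrey follows from Kato'' therefore puts all the weight on the hardest estimate, and that is exactly where your argument has a gap.

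The gap is in the Green's-function-tested Bochner argument for $\int_M|\Hess\phi|^2 d(x,y)^{2-n}\,dg(y)$. After the integration by parts you describe, the cross term produces not only $\int\Gamma(\Delta\phi)^2$ (which is fine) but also the term $\int\Delta\phi\,\langle\nb_y\Gamma(x,y),\nb\phi\rangle\,dg(y)$, which by \eqref{GdGjie} and $\Delta\phi=(\rho_0-V)\phi$ is of size $\int_M V|\nb\phi|\,d(x,y)^{1-n}\,dg(y)$. This term is \emph{not} covered by Lemma \ref{leimbed} (the weight $d^{1-n}$ is not of the form $\eta^2$ with $\eta\in H^1$, since $\nb(d^{(1-n)/2})\notin L^2$), and it cannot be closed with the inputs you actually have: a dyadic-annulus computation using the full strength of \eqref{rick1r} (namely $\int_{B(x,r)}V d^{2-n}\,dg\le Cr$), \eqref{defkato}, and $\|\nb\phi\|_{L^p}\le b_p$ for every finite $p$ shows each annulus contributes a bounded constant, so the sum over scales is exactly borderline divergent; any attempt via H\"older or weighted Cauchy--Schwarz fails at the same critical exponent. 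The template from \cite{TZq2} that you invoke works for the Ricci potential $u$ precisely because there both $|\nb u|$ and $\Delta u=n-R$ are in $L^\infty$ (Perelman), so the analogous term is bounded by $\int d^{1-n}\,dg<\infty$; for $\phi$ neither holds ($\Delta\phi$ involves the unbounded $V$, and $|\nb\phi|$ is only in every finite $L^p$ --- the very ``just short of $L^\infty$'' issue the introduction flags). A secondary, fixable point: bounding $\int\Gamma|\Hess\phi|^2$ from above only controls the Kato integral if $\Gamma$ is bounded below and comparable to $d^{2-n}$ near $x$, which needs the heat-kernel lower bound in property 7(b), not just $|\Gamma|\le Cd^{2-n}$. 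To repair the proof you should follow the paper's local route: cut-off Caccioppoli estimates at scale $r$ giving \eqref{hessfm2}--\eqref{hessfm1}, from which the Morrey bound in \eqref{ricwanma} (the estimate actually used later, e.g. in \eqref{richatscale}) follows without first establishing the full Kato bound on $|\Hess f|^2$.
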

\proof

According to \eqref{ricfwan2}, we have,
\be
\lab{dfdf2-v}
\al
&\Delta f = - |\nb f|^2 - V + \rho_0 = 0,\\
&Ric_{\td g}=Ric - (n-2) Hess \, f + (n-2) df \otimes df +V \, g - (n-3) |\nb f|^2 g- \rho_0 g,
\eal
\ee where $V=|Ric^-|$. Again, during the proof, geometric quantities without index or reference are with respect to the original metric $g$.

From Lemma 2.1 in \cite{TZq2} (or \eqref{defkato} here), we know, since $u$ there is the Ricci potential, that
\be
\lab{tzq22.1}
\int_{B(x, r)} |\Ric|^2 d g \le A_0 r^{n-2}
\ee Pick a standard cut-off function $\eta$ supported in $B(x, 2 r)$ such that $\eta=1$ in $B(x, r)$ and $|\nb \eta| \le c/r$. Then standard computation using Bochner's formula and integration by parts imply that
\[
\al
&\int_{B(x, 2 r)} |\Hess f |^2 \eta^2 d g \\
&\le C \int_{B(x, 2 r)} |\Delta f |^2 \eta d g  + C \int_{B(x, 2 r)} |\nb f |^2 \,
|\nb \eta |^2 d g + C \int_{B(x, 2 r)} |Ric| \, |\nb f |^2 \,
\eta^2 d g\\
&\le C \int_{B(x, 2 r)} |\, |\nb f|^2 + |Ric^-| + \rho_0 |^2 \eta d g  + \frac{C}{r^2} \int_{B(x, 2 r)} |\nb f |^2 \,
 d g + C \int_{B(x, 2 r)} |Ric| \, |\nb f |^2 \,
\eta^2 d g.
\eal
\]Here we just used \eqref{dfdf2-v}.  Since, for all $q \ge 1$,  $\Vert \nb f \Vert_{L^q(M)} \le C_q$ by Theorem \ref{thplapc}, this infers, by \eqref{tzq22.1}, that
\be
\lab{hessfm2}
\int_{B(x,  r)} |\Hess f |^2 dg \le C_\e r^{n-2-\e}
\ee for any $\e>0$. By Cauchy-Schwarz inequality and volume non-inflation property, this implies
\be
\lab{hessfm1}
\int_{B(x,  r)} |\Hess f | dg \le C_\e r^{n-1-(\e/2)} \le C r^{n-2}.
\ee Substituting this to the second identity in
\eqref{dfdf2-v}, we find
\be
\int_{B(x, r)} |\Ric_{\td g}|^2 dg \le A_0 r^{n-2},
\ee which induces, since $\td g= h g$ that
\be
\int_{B(x, r, \td g)} |\Ric_{\td g}|^2_{\td g} d\td g \le A_0 r^{n-2}.
\ee Here $A_0$ may have changed.

Furthermore, by \eqref{defkato}, and \eqref{hessfm1}, one can derive from
 \eqref{dfdf2-v} directly using dyadic annulus that
\[
\int_M \frac{|Ric_{\td g}(y)|^2}{d_{\td g}(x, y)^{n-2}} d\td g(y) \le A_0.
\]This proves the first inequality in \eqref{ricwanma}.

Next, we recall the formula for curvature tensors under conformal change. Let $\td Rm$ be the curvature tensor of the metric $\td g  = e^{2 f} g$, then by \cite{Be:1} p58 e.g.,
\be
\td R_{ijkl}= e^{2f} \left(R_{ijkl} + g \odot (- \Hess f + d f \otimes df - \frac{1}{2} |\nb f|^2 g ) \right).
\ee Here $\odot$ is the Kulkani-Nomizu product. Using \eqref{hessfm2} and bounds on $f$ and $|\nb f|$ in Theorem \ref{thplapc} (a) and (b), we deduce
\eqref{rml2wan}, completing the proof of the proposition.
\qed

\medskip

Now we are in a position to give

\subsection{ $\e$ splitting map, metric cone property, codimension 4 singular set}

The proof  is divided into 4 steps. We mainly work on the conformal metric, extending the Cheeger-Colding theory to the current case in steps 1, 2, 3, reaching convergence modulo co-dimension 2. In step 4, we generalize the main result in the K\"ahler case in Cheeger-Colding-Tian \cite{CCT} to the current case. Due to the properties of the conformal function $h$ in Theorem \ref{thplapc},  we can go back to the original metric to reach the claimed results.

{\it Step 1.}
Mean value inequalities, gradient estimates for harmonic and caloric functions for the conformal metric $\td g= h g$.

\begin{proposition}
\lab{pr3.1}
Let $u$ be a harmonic function with respect to $\td g$ on $M$, i.e.
\be
\lab{eqharmwan}\Delta_{\td g} u=0.
\ee For any $0<r \le diam (M, \td g)/2$, there exists a constant $C$, depending only on the basic parameters of the initial metric, such that
\be
\lab{mviduwan2.0}
| u(x)|^2 \le \frac{C}{|B(x, r, \td g)|_{\td g}} \int_{B(x, r, \td g)}  u^2(y) d\td g(y),
\ee
\be
\lab{mviduwan2}
|\nb_{\td g} u(x)|^2_{\td g} \le \frac{C}{r^2 |B(x, r, \td g)|_{\td g}} \int_{B(x, r, \td g)}  u^2(y) d\td g(y).
\ee
\end{proposition}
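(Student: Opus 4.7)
The plan is to deduce both bounds from Moser iteration applied to subsolutions of elliptic inequalities with respect to $\td g$, using the uniform Sobolev inequality (property (4)) and volume doubling (properties (2), (3)) already established for $(M, \td g)$. The first inequality \eqref{mviduwan2.0} is the easier one: since $\Delta_{\td g} u = 0$, direct computation gives $\Delta_{\td g} u^2 = 2 |\nb_{\td g} u|^2_{\td g} \ge 0$, so $u^2$ is $\td g$-subharmonic. Standard De Giorgi--Nash--Moser iteration with test functions of the form $u^{2p} \phi^2$, where $\phi$ is a smooth cutoff supported in $B(x, r, \td g)$, then yields \eqref{mviduwan2.0} without any Ricci input.

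For the gradient estimate \eqref{mviduwan2}, I would proceed by a two-step reduction: first a Caccioppoli inequality for $u$, then a mean value inequality for $w := |\nb_{\td g} u|^2_{\td g}$. The Caccioppoli step is a direct integration by parts. Testing $\Delta_{\td g} u = 0$ against $\phi^2 u$ with a cutoff $\phi = 1$ on $B(x, r/2, \td g)$, $\supp \phi \subset B(x, r, \td g)$, $|\nb_{\td g} \phi|_{\td g} \le C/r$, yields
\[
\int_{B(x, r/2, \td g)} |\nb_{\td g} u|^2_{\td g} \, d\td g \le \frac{C}{r^2} \int_{B(x, r, \td g)} u^2 \, d\td g.
\]
For the mean value inequality on $w$, I would use Bochner's formula for $(M, \td g)$: since $u$ is $\td g$-harmonic,
\[
\Delta_{\td g} w = 2 |\Hess_{\td g} u|^2_{\td g} + 2 \Ric_{\td g}(\nb_{\td g} u, \nb_{\td g} u) \ge - 2 |\Ric_{\td g}|_{\td g}\, w,
\]
so $w$ is a subsolution of a Schr\"odinger-type inequality $\Delta_{\td g} w + V w \ge 0$ with $V := 2 |\Ric_{\td g}|_{\td g}$.

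The main obstacle is that $V$ is not bounded in $L^\infty$, only in an integral sense. This is precisely resolved by Proposition \ref{pr3.00}, which supplies the uniform $K_2$ Kato bound $\int_M |\Ric_{\td g}(y)|^2_{\td g} / d_{\td g}(x,y)^{n-2} \, d\td g(y) \le A_0$. Transposing Lemma \ref{leimbed} to the metric $\td g$, whose proof uses only Green's function bounds, volume doubling, and the Poincar\'e inequality (all of which hold for $\td g$ by properties (1)--(5)), yields a subcritical embedding analogous to \eqref{vsubcr}. This embedding is exactly what is needed to absorb the potential term in a Moser iteration on $\Delta_{\td g} w + V w \ge 0$, producing
\[
w(x) \le \frac{C}{|B(x, r/4, \td g)|_{\td g}} \int_{B(x, r/4, \td g)} w \, d\td g.
\]
Composing this with the Caccioppoli bound and the uniform volume doubling of $\td g$ to pass from radius $r/4$ to $r$ yields \eqref{mviduwan2}. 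Apart from restating Lemma \ref{leimbed} under $\td g$, the scheme parallels the Bochner-plus-Moser argument used in \cite{TZq2} for the original metric, with all necessary ingredients already in hand.
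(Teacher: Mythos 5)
Your overall scheme is sound and, for the gradient bound \eqref{mviduwan2}, genuinely different from the paper's. The paper never applies Bochner with respect to $\td g$: it rewrites $\Delta_{\td g}u=0$ as $\Delta u=-\frac{n-2}{2}h^{-1}\langle \nb h,\nb u\rangle$ for the original metric, applies the $g$-Bochner formula to $|\nb u|^2$, bounds $Ric_g\ge -Vg$ with $V=|Ric_g^-|$ (whose Kato bound is basic property 9, \eqref{defkato}), absorbs the resulting drift terms using $h^{-1}|\nb h|\in L^{n+1}(M)$ from Theorem \ref{thplapc}, runs Moser's iteration for $g$, and only at the end converts back to $\td g$ via the two-sided bounds on $h$. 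Your route --- the $\td g$-Bochner formula plus the Kato bound on $|Ric_{\td g}|^2$ from Proposition \ref{pr3.00} --- avoids the drift terms entirely and is arguably cleaner; your treatment of \eqref{mviduwan2.0} and the final Caccioppoli-plus-doubling step coincide with the paper's.

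The one genuine flaw is the justification of the embedding step. You assert that Lemma \ref{leimbed} ``transposes to $\td g$'' because its proof uses only Green's function bounds, volume doubling and the Poincar\'e inequality, ``all of which hold for $\td g$ by properties (1)--(5)''. That is not accurate: the proof of Lemma \ref{leimbed} rests on the gradient bound $|\nb\Gamma(x,y)|\le C\,d(x,y)^{1-n}$ in \eqref{GdGjie}, and no such bound for the Green's function of $\td g$ is contained in properties (1)--(5); obtaining it would essentially require the gradient estimates of Propositions \ref{pr3.1}--\ref{pr3.2} themselves, so as written the argument is circular at this point. The gap is easily repaired: since $b_1 g\le\td g\le b_2 g$ with uniform $b_1,b_2$ (Theorem \ref{thplapc}(a)), distances, volume elements, gradients and tensor norms for $g$ and $\td g$ are uniformly comparable, so the second bound in \eqref{ricwanma} yields a Kato bound for $|Ric_{\td g}|^2$ with respect to the original metric $g$; applying Lemma \ref{leimbed} for $g$ (where \eqref{GdGjie} is available) to this potential and converting the Dirichlet and $L^2$ terms back to $\td g$ gives exactly an embedding of the form \eqref{vsubcr} for $V=2|Ric_{\td g}|_{\td g}$, which is what your Moser iteration on $\Delta_{\td g}w+Vw\ge0$ needs. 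With that correction your proof goes through.
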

\proof

We will just give a proof of the 2nd inequality since the first one is a routine consequence of the Sobolev inequality d) and volume properties b) and c) via Moser's iteration.

It is convenient to work on the Laplace  for the  original metric $g$ and treat those for $\td g$ as  perturbations.
Recall, from \eqref{lapchange} and \eqref{nabchange}
\be
\lab{lapnabchange}
\Delta_{\td g} u  = h^{-1} \Delta u + \frac{n-2}{2} h^{-2} < \nb h, \nb u>, \quad
|\nabla_{\td g} u |^2_{\td g} = h^{-1} | \nabla u |^2.
\ee Therefore
\be
\lab{dduwan}
\Delta u = - \frac{n-2}{2} h^{-1} < \nb h, \nb u>.
\ee Combing with Bochner's formula for $g$,
\be
\lab{bochn3}
\Delta ( |\nb  u |^2)  =  2 | Hess \, u|^2 + 2 < \nb \Delta u, \nb u> + 2 Ric(\nb u, \nb u),
\ee we deduce
\be
\lab{modcdineq2.1}
\al
 \Delta ( |\nb u |^2)  \ge  2 | Hess \, u|^2 - (n-2) < \nb ( h^{-1} < \nb h, \nb u>), \nb u>   - 2 V | \nb u|^2.
\eal
\ee Given $r>0$ such that $2r \le diam (M)$, denote $F= |\nb u |^2$ and pick a smooth cut-off function $\eta$ such that $\eta=0$ outside of $B(x, 2 r)$. For $p \ge 1$, using
$F^{2p+1} \eta^2$ as a test function on \eqref{modcdineq2.1}, we find
\be
\lab{cacioF2}
\al
\int_M |\nb (F^{p+1} \eta)|^2 dg &\le c \int_M F^{2(p+1)} |\nb \eta|^2 dg \\
&\qquad +
\frac{(p+1)^2}{2p+1} (n-2) \underbrace{ \int_M < \nb ( h^{-1} < \nb h, \nb u>), \nb u>  F^{2p+1} \eta^2 dg}_{T_2}
\\
&\qquad +
 \frac{(p+1)^2}{2p+1}  \int_M  2 V F^{2(p+1)} \eta^2 dg
\eal
\ee After integration by parts, we find
\[
T_2 = -\int_M  h^{-1} < \nb h, \nb u> \,  \Delta u  F^{2p+1} \eta^2 dg
-\int_M  h^{-1} < \nb h, \nb u> \,  <\nb u,  \nb (F^{2p+1} \eta^2)> dg
\]which infers, via \eqref{dduwan}, that
\[
T_2 \le \frac{n-2}{2} \int_M  \left[ h^{-1} < \nb h, \nb u> \right]^2  \,   F^{2p+1} \eta^2 dg
+ \int_M  h^{-1} |\nb h|   \,  F  |\nb (F^{2p+1} \eta^2)|  dg.
\]Substituting this to the right hand side of \eqref{cacioF2} and using Cauchy Schwarz inequality, we obtain
\be
\lab{cacioF2.1}
\al
\int_M |\nb (F^{p+1} \eta)|^2 dg &\le c(1+p^2) \int_M F^{2(p+1)} |\nb \eta|^2 dg \\
&\qquad + c p^2  \int_M  h^{-1} |\nb h|   \,  F^{2p+1}  |\nb F|  \eta^2  dg +
c p  \int_M (h^{-1} |\nb h|)^2  F^{2p+2} \eta^2 dg
\\
&\qquad +
 c p  \int_M   V F^{2(p+1)} \eta^2 dg.
\eal
\ee This implies, by Cauchy-Schwarz inequality again,
\be
\lab{cacioF2.2}
\al
\int_M |\nb (F^{p+1} \eta)|^2 dg &\le c(1+p^2) \int_M F^{2(p+1)} |\nb \eta|^2 dg \\
&\qquad +
c p^2  \int_M (h^{-1} |\nb h|)^2  F^{2p+2} \eta^2 dg
 +
 c p  \int_M   V F^{2(p+1)} \eta^2 dg.
\eal
\ee By Theorem \ref{thplapc} (a) and (b), we know that $h^{-1} |\nb h | \in L^{1+n}(M)$ so that $[h^{-1} |\nb h |]^2$ is in $L^{(1+n)/2}$ with a uniform norm. Since $(1+n)/2>n/2$ (the critical number), by the uniform Sobolev inequality and standard interpolation, we know the following embedding inequality holds: $\forall \e>0$, there exists a fixed power $q_0$ and constant $C$ such that
\[
\int_M (h^{-1} |\nb h|)^2  F^{2p+2} \eta^2 dg \le \e \int_M |\nb (F^{p+1} \eta)|^2 dg
+ C \e^{- q_0} \int_M  F^{2p+2} \eta^2 dg.
\]This and the embedding inequality \eqref{vsubcr} for $V$, together with d),  allows us to carry out Moser's iteration from \eqref{cacioF2.2}. Hence
\be
\lab{mvidu1}
|\nb u(x)|^2 \le \frac{C_1}{|B(x, r)|} \int_{B(x, r)} |\nb u(y)|^2 dg(y).
\ee Using \eqref{lapnabchange} and the bounds on $h$, we can find a constant $C_2$ such that
\be
\lab{mviduwan1}
|\nb_{\td g} u(x)|^2_{\td g} \le \frac{C_2}{|B(x, r, \td g)|_{\td g}} \int_{B(x, r, \td g)} |\nb_{\td g} u(y)|^2 d\td g(y).
\ee This easily yields, by using another suitable test function $u \eta_1$ on $\Delta_{\td g} u =0$ and adjusting the radius, that \eqref{mviduwan2} is true. \qed

Next, we present a similar mean value inequality for the heat equation with a different proof which also work for the Laplace equation.

\begin{proposition}
\lab{pr3.2}
Let $u$ be a caloric function with respect to $\td g$, i.e.
\be
\lab{eqrewan}\Delta_{\td g} u - \pa_s u=0, \quad \text{on} \quad M \times [0, \infty).
\ee For any $r \in [0, diam(M, \td g)/2)$, $x \in M$  and $s>2r^2$, there exists a constant $C$, depending only on the basic parameters of the initial metric, such that
\be
\lab{mviheatwan2.0}
| u(x, s)|^2 \le \frac{C}{ |B(x, r, \td g)|_{\td g}} \int^s_{s-r^2} \int_{B(x, r, \td g)}  u^2(y, l) d\td g(y)dl,
\ee
\be
\lab{mviheatwan2}
|\nb_{\td g} u(x, s)|^2_{\td g} \le \frac{C}{r^2 |B(x, r, \td g)|_{\td g}} \int^s_{s-r^2} \int_{B(x, r, \td g)}  u^2(y, l) d\td g(y)dl.
\ee
\end{proposition}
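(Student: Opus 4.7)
Since $\td g = hg$ with $h$ independent of the parabolic time variable $s$, \eqref{lapchange}--\eqref{nabchange} rewrite the caloric equation $\pa_s u = \Delta_{\td g} u$ as
\[
\Delta u - h\,\pa_s u = -\tfrac{n-2}{2}\,h^{-1}\langle \nb h,\nb u\rangle
\]
on $(M,g)$. Since $b_1\le h\le b_2$ and $h^{-1}|\nb h|\in L^p(M,g)$ for every $p\ge 1$ by Theorem~\ref{thplapc} and Proposition~\ref{prhphi2}, this is a uniformly parabolic equation with a drift of arbitrarily high integrability. The plan is to run a parabolic Moser iteration in the background metric $g$, using the uniform Sobolev inequality in the preliminaries, and then return to $\td g$ via the two-sided bound on $h$. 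This scheme is essentially a parabolic version of the argument in Proposition~\ref{pr3.1}, and time-independent inputs recover that Laplace case as well.

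For \eqref{mviheatwan2.0} I would test the equation against $u\eta^2$, where $\eta=\eta(y,l)$ is a standard space-time cut-off supported in $B(x,2r,\td g)\times(s-2r^2,s)$ and identically $1$ on $B(x,r,\td g)\times(s-r^2,s)$. Integration by parts in $y$, together with Cauchy--Schwarz applied to the drift and the $L^p$-integrability of $\nb h$, yields an energy inequality of the form
\[
\sup_l \int u^2\eta^2\,dg + \int\!\!\int|\nb u|^2\eta^2\,dg\,dl \le C\int\!\!\int u^2\bigl(|\nb\eta|^2 + \eta|\pa_l\eta|\bigr)dg\,dl.
\]
Combined with the uniform Sobolev inequality (property~(4) of the preliminaries), this gives the standard parabolic reverse-H\"older gain from $L^2$ to $L^{2(n+2)/n}$ on successively smaller cylinders, and iterating in the usual Moser fashion produces the $L^\infty$ bound on $u$.

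For \eqref{mviheatwan2} I would set $F=|\nb u|^2$ and combine Bochner on $(M,g)$ with the equation for $u$ to obtain an inequality of the form
\[
\Delta F - h\,\pa_s F \ge 2|\Hess u|^2 - 2VF - (n-2)\langle\nb(h^{-1}\langle\nb h,\nb u\rangle),\nb u\rangle + 2(\pa_s u)\langle\nb h,\nb u\rangle.
\]
Testing against $F^{2p+1}\eta^2$ as in \eqref{cacioF2}, integrating by parts, and handling the $V$-term through the embedding \eqref{vsubcr} and the $\nb h$-terms through Cauchy--Schwarz together with the $L^p$-bounds of Propositions~\ref{prhphi1}--\ref{prhphi2}, one obtains a parabolic Caccioppoli inequality for $F$. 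A second parabolic Moser iteration then bounds $F(x,s)$ by its $L^2$ average over a parabolic cylinder; pairing this with the Caccioppoli estimate on $\nb u$ from the first step controls that average by the $L^2$ integral of $u^2$, giving \eqref{mviheatwan2}. Passage back to $\td g$ uses $|\nb_{\td g} u|^2_{\td g}=h^{-1}F$ and $b_1\le h\le b_2$.

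The main obstacle is the mixed term $2(\pa_s u)\langle\nb h,\nb u\rangle$ in the Bochner--heat inequality: it has no sign and couples $\pa_s u$, which is a priori uncontrolled, with two factors we are simultaneously trying to estimate. I would address it by using the equation itself to replace $\pa_s u$ with $h^{-1}\Delta u - \tfrac{n-2}{2}h^{-2}\langle\nb h,\nb u\rangle$, splitting the resulting expression into a piece of the form $(\Delta u)\langle\nb h,\nb u\rangle$ that is absorbable into $|\Hess u|^2$ via Young's inequality, and a piece of the form $h^{-1}|\langle\nb h,\nb u\rangle|^2$ that is of the same type as the remaining lower-order terms and is handled by the high integrability of $\nb h$. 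This reduces the whole argument to the iteration scheme already worked out in the proof of Proposition~\ref{pr3.1}.
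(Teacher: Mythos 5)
Your proposal is correct in outline, but it takes a genuinely different route from the paper's proof of Proposition \ref{pr3.2}. The paper works entirely in the conformal metric: since $u$ is caloric for $\td g$, the parabolic Bochner formula on $(M,\td g)$ produces no analogue of your mixed term $2(\pa_s u)\langle\nb h,\nb u\rangle$ (the term $2\langle\nb_{\td g}(\Delta_{\td g}u-\pa_s u),\nb_{\td g}u\rangle_{\td g}$ simply vanishes), and the curvature input is the lower bound \eqref{ricfwan4} on $\Ric_{\td g}$, whose $\Hess_{\td g}f$ part is broken by integration by parts and whose $|\nb h|^2$ part is treated as an almost bounded potential, followed by Moser iteration as in \cite{ZqZm:1}. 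You instead pull the equation back to the background metric $g$, use the Bochner formula for $g$ with $V=|Ric^-|$ controlled through the Kato-type embedding \eqref{vsubcr}, and must then confront the mixed term, which you eliminate correctly by substituting the equation and absorbing $(\Delta u)\langle\nb h,\nb u\rangle$ into the good $|\Hess u|^2$ term via Young; this is in effect the parabolic extension of the paper's own proof of the elliptic Proposition \ref{pr3.1}, whereas the paper deliberately gives a different proof for the parabolic case (one it notes also covers the Laplace equation). What your route buys is that it needs neither the conformal Ricci formula \eqref{ricfwan4} nor the $\Hess_{\td g}f$ manipulation, only the ingredients already assembled for Proposition \ref{pr3.1}; what the paper's route buys is that the $\pa_s u$ coupling never appears and the argument lines up with the Bakry--\'Emery framework of \cite{ZqZm:1} that is reused later in Section 3. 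Two small repairs: the substitution should read $\pa_s u=h^{-1}\Delta u+\tfrac{n-2}{2}h^{-2}\langle\nb h,\nb u\rangle$ (your minus sign is a slip, harmless since you only estimate absolute values), and in the drift term $\langle\nb(h^{-1}\langle\nb h,\nb u\rangle),\nb u\rangle$ you should, as in \eqref{cacioF2}, integrate by parts before estimating so that no second derivatives of $h$ ever appear.
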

\proof

Again we will just prove the 2nd inequality.
 Bochner's formula for $(M, \td g)$ and \eqref{ricfwan4} together infer that
\be
\lab{bochheatwan}
\al
( \Delta -\pa_s)& ( |\nb_{\td g} u |^2_{\td g})  \ge  2 | Hess_{\td g} \, u|^2_{\td g} + 2 Ric_{\td g}(\nb_{\td g} u, \nb_{\td g} u)\\
& \ge
- (n-2) \underbrace{Hess_{\td g} \, f (\nb_{\td g} u, \nb_{\td g} u)}_{T_1}
-c \underbrace{|\nb h|^2 |\nb_{\td g} u |^2_{\td g}}_{T_2}- c \rho_0 |\nb_{\td g} u |^2_{\td g}.
\eal
\ee Recall from Theorem \ref{thplapc}, for any $p \ge 1$, the $L^p(M)$ norms of $|\nb_{\td g} f|_{\td g}$ and $|\nb_{\td g} h |_{\td g}$ are uniformly bounded. The
mean value inequality can be proven in the same way as Theorem 4.1,  4.3 and Corollary 4.2 in \cite{ZqZm:1}. Here is a sketch of the proof. Write $F=|\nb_{\td g} u |^2_{\td g}$. For $p>1$ and a suitable cut-off function $\eta$, we use $F^{2p+1} \eta^2$ as a test function on \eqref{bochheatwan}. For the term involving $T_1$, we use integration by parts to break the Hessian. For the term involving $T_2$, we use H\"older's inequality and interpolation. Note that $|\nb h|^2$ in $T_2$, as a potential function, is almost bounded since it is in any $L^p$ space. This will give us a Cacciopoli inequality for $F^{p+1} \eta^2$. Since the Sobolev inequality d) holds, we can use Moser's iteration to infer the desired inequality.
\qed

\medskip

{\it Step 2.} $\e$ splitting (harmonic) map,  segment inequality and G-H convergence except on co-dimension $2$ set.

With the help of the integral Laplace comparison in Theorem \ref{thplapc} and the gradient estimate for harmonic functions, the main relevant results of the Cheeger-Colding convergence theory for manifolds with bounded Ricci curvature can be extended to the current case in a relatively standard manner, at least in the co-dimension 2 singularity case. We will give a description of this process below. Identical proofs will be skipped and referred to relevant references.

Step 2.1. excess estimates.   one can derive the excess estimate \cite{AG} by the standard method in \cite{ChCo2} which is further described in \cite{Ch}. See also \cite{CoNa} first and \cite{ZqZm:1} Theorem 5.3 later for a modified proof.  The main ingredients for the latter are the integral Laplace comparison and the weak Harnack inequality (6).
For completeness, we state the result and provide the proof, mimicking that in \cite{ZqZm:1}.

Fix $q_+$ and $q_-$ in $M$, and define the excess function on $(M, \td g)$ as
\[
 e(y)= d_{\td g}(y,q_+)+ d_{\td g}(y,q_-)- d_{\td g}(q_+,q_-).
\]
For any $x\in M$, if we denote the Buseman functions by
\[
 b_{\pm}(y)=d_{\td g}(y,q_{\pm})-d_{\td g}(x,q_{\pm}).
\]

\begin{proposition}[Excess estimate]
\label{excess}
 Let $x \in M$ and $\Lambda \ge 4$ be a constant. Assume that \be\label{excess assumption 1}
d_{\td g}(x,q_{\pm})\geq \Lambda^{\frac{1}{2}}
\ee
and
\be\label{excess assumption 2}
e(x)\leq \e.
\ee
Then for $r \in (0, 1]$, we have, $\forall y \in   B(x,r, \td g)$,
\begin{equation}\label{excess estimate}
e(y) \leq C \Psi;
\end{equation}   here,
\[
\Psi=  \left(\e+ r^{2-(n/q)} \, ||\psi_+ + \psi_-||_{L^q(B(x,r, \td g))}
 + r^2 \Lambda^{-\frac{1}{2}} \right)^{1/(n+1)}
, \quad q \in (n, 2n),
\]and
\be\label{eq psi}
\psi_{\pm}=\left(\Delta_{\td g} d_{\td g}(y,q_{\pm})-\frac{n-1}{d_{\td g}(y,q_{\pm})}\right)_+;
\ee  $C>0$  is a constant depending only on the constant in \eqref{wharnakpe} and volume doubling constant,
\end{proposition}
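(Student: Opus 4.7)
My plan is to prove Proposition \ref{excess} by the Abresch--Gromoll excess estimate strategy, with the classical pointwise Laplace comparison replaced by the integral version from Theorem \ref{thplapc}. The first step would be to derive the differential inequality
$$
\Delta_{\td g} e \leq \xi := \psi_+ + \psi_- + C\Lambda^{-1/2}
$$
on $B(x,r,\td g)$ in the barrier sense. Indeed, the very definition \eqref{eq psi} of $\psi_\pm$ says $\Delta_{\td g} d_{\td g}(\cdot, q_\pm)\leq \psi_\pm + (n-1)/d_{\td g}(\cdot,q_\pm)$; the hypotheses $\Lambda\ge 4$, $r\le 1$ and $d_{\td g}(x,q_\pm)\ge \Lambda^{1/2}$ give, via the triangle inequality, $d_{\td g}(y,q_\pm)\ge \Lambda^{1/2}/2$ for every $y\in B(x,r,\td g)$, so the pole terms contribute only $O(\Lambda^{-1/2})$. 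By Theorem \ref{thplapc} one has $\xi\in L^q$ for $q\in(n,2n)$.

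Next, since $e\ge 0$ by the triangle inequality and $e(x)\le \e$, applying the weak Harnack inequality \eqref{wharnakpe} (property (6)) to $e$ at the point $x$ gives
$$
\oint_{B(x,r,\td g)} e\, d\td g \leq C\left(\e + r^{2-n/q}\|\xi\|_{L^q(B(x,r,\td g))}\right) \leq C\Psi^{n+1},
$$
where the last inequality uses the volume bound $|B(x,r,\td g)|_{\td g}\le \kappa^{-1}r^n$ to absorb the $\Lambda^{-1/2}$ part of $\|\xi\|_{L^q}$ into the $r^2\Lambda^{-1/2}$ term of $\Psi^{n+1}$.

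The crucial step is then to upgrade this $L^1$ mean-value bound into a pointwise estimate at an arbitrary $y\in B(x,r,\td g)$. For this I would deploy the Abresch--Gromoll barrier
$$
G_s(z) := s^{2-n} - d_{\td g}(y,z)^{2-n}
$$
on the annulus $\{s\le d_{\td g}(y,\cdot)\le 2r\}$, where $s\in(0,r)$ is a parameter to be optimized. Repeating the Step~1 computation for $\rho:=d_{\td g}(y,\cdot)$ yields $\Delta_{\td g} G_s\le -c(n)\rho^{1-n}\tilde\psi_+$ in the barrier sense, where $\tilde\psi_+:=(\Delta_{\td g}\rho-(n-1)/\rho)_+$ is controlled in $L^q$ by Theorem \ref{thplapc}. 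Pairing $\Delta_{\td g} e$ with $G_s$ by integration by parts against the Green's function (property 7(a)), and combining with the estimate from Step~2 and the $L^q$ bound on $\tilde\psi_+$, one obtains a pointwise inequality of the shape
$$
e(y) \leq C\bigl(s^{-n}\Psi^{n+1} + s^{2-n/q}(\|\xi\|_{L^q}+\|\tilde\psi_+\|_{L^q})\bigr).
$$
Choosing $s\sim \Psi$ then delivers $e(y)\le C\Psi$.

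The main obstacle is this final barrier comparison. In the classical bounded-Ricci case, the Abresch--Gromoll argument uses the pointwise maximum principle on an annulus together with pointwise Laplace comparison for $\rho^{2-n}$; here that pointwise comparison is unavailable and must be replaced by an integration-by-parts or Green's-function computation driven by an $L^q$ source. The $L^q$ control for $q\in(n,2n)$ on $\psi_+$ provided by Theorem \ref{thplapc}, strictly sharper than the critical $q>n/2$ required in \eqref{wharnakpe}, is precisely what buys the room to close the balance and recover the exponent $1/(n+1)$ stated in the proposition.
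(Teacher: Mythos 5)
Your Steps 1 and 2 coincide with the paper's argument: one writes $\Delta_{\td g} e \le \frac{n-1}{d_{\td g}(\cdot,q_+)}+\frac{n-1}{d_{\td g}(\cdot,q_-)}+\psi_++\psi_-$, notes that the pole terms are $O(\Lambda^{-1/2})$ on $B(x,r,\td g)$, and feeds this into the weak Harnack inequality \eqref{wharnakpe} to get $\oint_{B(x,r,\td g)} e \, d\td g \le C\big(\e+r^{2-n/q}\Vert\psi_++\psi_-\Vert_{L^q}+r^2\Lambda^{-1/2}\big)=:C\Psi_1$. The genuine gap is in your third step, the passage from this $L^1$ average bound to the pointwise bound at an arbitrary $y$. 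As you yourself flag, the barrier comparison is the "main obstacle," and your sketch does not close it: the claimed inequality $e(y)\le C\big(s^{-n}\Psi^{n+1}+s^{2-n/q}(\Vert\xi\Vert_{L^q}+\Vert\tilde\psi_+\Vert_{L^q})\big)$ is asserted rather than derived. Concretely, (i) the barrier satisfies $\Delta_{\td g}G_s\le (n-2)\rho^{1-n}\tilde\psi_+$ (your sign is off), and on the annulus $\{s\le\rho\le 2r\}$ the $L^q$ norm of this error blows up like $s^{1-n}$, which is fatal when you later set $s\sim\Psi$ unless the error enters multiplied by a small matching constant as in the classical Abresch--Gromoll scheme; (ii) since $e$ is only a supersolution ($\Delta_{\td g}e\le\xi$), a Green's-function pairing by itself cannot yield an upper bound on $e(y)$ -- the Green's function on a closed manifold is not signed and $\Delta_{\td g}e$ can be very negative -- so you need either an inhomogeneous maximum principle with $L^q$ data on the annulus (not established in your outline) together with inner-boundary matching that uses $|\nb e|\le 2$ and $e\ge 0$, or some other mechanism; your sketch uses neither the Lipschitz bound nor the nonnegativity of $e$ at this stage, yet the classical AG argument cannot run without them.

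The paper closes this step by a much more elementary device (Colding--Naber, Remark 2.9 of \cite{CoNa}): for $y\in B(x,(1-\Psi_2)r,\td g)$, volume doubling gives $\oint_{B(y,\Psi_2 r,\td g)}e\le C\Psi_1\Psi_2^{-n}$, so with the choice $\Psi_2=\Psi_1^{1/(n+1)}$ there is a point $z\in B(y,\Psi_2 r,\td g)$ with $e(z)\le C\Psi_2$, and then the Lipschitz bound $|\nb e|\le 2$ yields $e(y)\le e(z)+2\Psi_2 r\le C\Psi_2$; the boundary shell and the case $\Psi_1\ge 1/2$ are handled by the same Lipschitz bound. This is exactly where the exponent $1/(n+1)$ in $\Psi$ comes from, a feature your barrier route does not explain. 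If you want to salvage your approach, you would have to prove an $L^q$-forced maximum principle on annuli and redo the AG boundary matching quantitatively; as written, the final step is a genuine gap, and the simpler doubling-plus-Lipschitz argument is the one that actually proves the proposition.
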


\begin{proof}
 Note that $\Psi$ is a function which goes to $0$ when $r \to 0$ or $\e \to 0$.
 The proof, which differs from the original one in \cite{AG}, is inspired by Remark 2.9 \cite{CoNa}. During the proof, all geometric quantities are with respect to $\td g$, but for simplicity, we will omit the reference to $\td g$. For example, $d(x, y)=d_{\td g}(x, y)$ etc.
Note that
\[
\Delta e(y)=\Delta b_+ + \Delta b_-\leq \frac{n-1}{d(y,q_+)}+\frac{n-1}{d(y,q_-)}+\psi_++\psi_-.
\]
According to \eqref{wharnakpe},  we have
\[
\al
\oint_{B(x,r)}e(y)dg(y)\leq& C\left( e(x) + r^{2-(n/q)} \, ||\psi_+ + \psi_-||_{L^q(B(x,r))} + 4(n-1)r^2 \Lambda^{-\frac{1}{2}}\right)\\
\leq & C\left(\e+ r^{2-(n/q)} \, ||\psi_+ + \psi_-||_{L^q(B(x,r))}
 + r^2 \Lambda^{-\frac{1}{2}} \right)\\
\equiv &C \Psi_1.
\eal
\]

Let us assume $\Psi_1<1/2$ first.
Then, by using the volume doubling property, for any $y\in B(x,(1-\Psi_2)r)$ and $\Psi_2 \in (0, 1)$, we have
\[\al
\oint_{B(y,\Psi_2 r)}e(z)dz\leq&\frac{|B(x,r)|}{|B(y,\Psi_2 r)|}\oint_{B(x,r)}e(z)dz\\
\leq&\frac{|B(y,2r)|}{|B(y,\Psi_2 r)|}C \Psi_1\\
\leq&  C^2 \Psi_1 \Psi_2^{-n}.
\eal\]
Hence, if we choose $\Psi_2= \Psi_1^{\frac{1}{n+1}}$, then the above inequality becomes
\[
\oint_{B(y,\Psi_2 R)}e(z)dz\leq C^2 \Psi_2.
\]
It implies that there exists  $z\in B(y,\Psi_2 R)$ such that
\[
e(z)\leq C^2 \Psi_2,
\]
and  by mean value theorem
\[
e(y)\leq e(z)+|\nb e| d(y,z)\leq C^2 \Psi_2+2 \Psi_2 r \le (C^2+2) \Psi_2.
\]
Since $y$ is an arbitrary point in $B(x,(1-\Psi_2)R)$, we can apply the  mean value theorem one more time for points in $B(x,R)\setminus B(x,(1-\Psi_2)R)$ to obtain
\[
\Psi= C \Psi_2,
\]where we have renamed the constant.

Next, if $\Psi_1 \ge 1/2$, the conclusion obviously holds since $|\nb e| =2 $ a.e., which infers $|e(y)-e(x)| \le 2 r$ so that $e(y) \le \e + 2 r \le C \Psi$.
\end{proof}

What we really need is a version of the excess estimate in small scales, i.e. when $r \to 0$. To this end we carry out a scaling, for $r \in (0, 1]$:
\be
\lab{defhatgb0}
\hat{g} = \lam \tilde{g}, \qquad \hat e = \lam^{1/2} e, \qquad \lam=1/r^2.
\ee The main point of the next proposition is that the excess goes to $0$ when the error $\e$, the scale $\lam^{-1/2}=r$ and $\Lambda^{-1}$ go to $0$.

\begin{proposition}[scaled excess estimate]
\label{excess2}
 Let $x \in M$ and $\Lambda \ge 4$ be a constant. Assume that
  \be\label{excess assumption 2.1}
d_{\hat g}(x,q_{\pm})\geq \Lambda^{\frac{1}{2}}
\ee
and
\be\label{excess assumption 2.2}
\hat e(x)\leq \e.
\ee
Then  we have, $\forall y \in   B(x,1, \hat g)$, $q \in (n, 2n)$, that
\be
\lab{ehatjie}
\hat e(y) \leq C \left( \e +  2 \beta^*_q \lam^{(n-q)/(2q)}
 + \Lambda^{-\frac{1}{2}} \right)^{1/(n+1)}
\ee where $\beta^*_q$ is the constant in part (c) of Theorem \ref{thplapc}, $C>0$  is a constant depending only on the constant in \eqref{wharnakpe} and volume doubling constant.
\end{proposition}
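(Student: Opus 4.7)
The plan is to reduce this to Proposition \ref{excess} applied to the rescaled metric $\hat g = \lambda \tilde g$ (with $\lambda = 1/r^2$) on the unit ball $B(x,1,\hat g)$, which under this scaling is identified with $B(x,r,\tilde g)$. The hypotheses $d_{\hat g}(x,q_\pm) \ge \Lambda^{1/2}$ and $\hat e(x) \le \varepsilon$ are exactly the scaled analogs of those in Proposition \ref{excess}, and both the weak Harnack inequality \eqref{wharnakpe} and the volume doubling property are invariant or uniform under this dilation on geodesic balls of $\hat g$-radius at most $1$, since the original properties for $(M,\tilde g)$ were established at all scales up to the diameter.

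The main computation is tracking how $\psi_\pm$ transforms under $\hat g = \lambda \tilde g$. Using $d_{\hat g} = \sqrt{\lambda}\, d_{\tilde g}$ and the standard scaling rule $\Delta_{\hat g} = \lambda^{-1}\Delta_{\tilde g}$, one obtains
\[
\hat\psi_\pm = \Bigl(\Delta_{\hat g} d_{\hat g}(\cdot,q_\pm) - \tfrac{n-1}{d_{\hat g}(\cdot,q_\pm)}\Bigr)_+ = \lambda^{-1/2}\,\psi_\pm .
\]
Combined with $d\hat g = \lambda^{n/2}\,d\tilde g$, this gives
\[
\|\hat\psi_\pm\|_{L^q(M,\hat g)} = \lambda^{(n-q)/(2q)}\,\|\psi_\pm\|_{L^q(M,\tilde g)} \le \lambda^{(n-q)/(2q)}\beta^*_q,
\]
where the last inequality is Theorem \ref{thplapc}(c). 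Since $q \in (n,2n)$ the exponent $(n-q)/(2q)$ is negative, so this bound is small precisely when $\lambda$ is large, i.e.\ when $r$ is small.

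Applying Proposition \ref{excess} to $(M,\hat g)$ with $r=1$ then yields
\[
\hat e(y) \le C\Bigl(\varepsilon + \|\hat\psi_+ + \hat\psi_-\|_{L^q(B(x,1,\hat g))} + \Lambda^{-1/2}\Bigr)^{1/(n+1)} \le C\Bigl(\varepsilon + 2\beta^*_q\lambda^{(n-q)/(2q)} + \Lambda^{-1/2}\Bigr)^{1/(n+1)},
\]
which is exactly \eqref{ehatjie}. The only nontrivial point is the first paragraph's assertion that Proposition \ref{excess} can be invoked with uniform constants at $\hat g$-scale one; this follows because the volume doubling constant is scale invariant and the constant in \eqref{wharnakpe} is uniform on balls of radius $\le \operatorname{diam}(M,\tilde g)/2$ and so, after rescaling, remains uniform on balls of $\hat g$-radius $\le 1$.
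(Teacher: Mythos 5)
Your proposal is correct and follows essentially the same route as the paper: rescale to $\hat g=\lam\td g$, note that the excess estimate of Proposition \ref{excess} applies to $(M,\hat g)$ at unit scale since its proof uses only the weak Harnack inequality \eqref{wharnakpe} and volume doubling, and then compute $\Vert\hat\psi_\pm\Vert_{L^q(B(x,1,\hat g))}=\lam^{(n-q)/(2q)}\Vert\psi_\pm\Vert_{L^q}\le\lam^{(n-q)/(2q)}\beta^*_q$ via Theorem \ref{thplapc}(c). The scaling identities you verify ($d_{\hat g}=\sqrt{\lam}\,d_{\td g}$, $\Delta_{\hat g}=\lam^{-1}\Delta_{\td g}$, $d\hat g=\lam^{n/2}d\td g$) are exactly those used in the paper's argument.
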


\proof

Since the proof of the proposition only depend on the weak Harnack inequality \eqref{wharnakpe} and the volume doubling property, its conclusion still hold for $(M, \hat g)$. Therefore, we have
\begin{equation}\label{excess estimate2}
\hat e(y) \leq C \hat \Psi;
\end{equation}   here,
\[
\hat \Psi=  \left( \e +  ||\hat \psi_+ + \hat \psi_-||_{L^q(B(x,1, \hat g))}
 + \Lambda^{-\frac{1}{2}} \right)^{1/(n+1)}
, \quad q \in (n, 2n),
\]and
\be\label{eq psi2}
\hat \psi_{\pm}=\left(\Delta_{\hat g} d_{\hat g} (y,q_{\pm})-\frac{n-1}{d_{\hat g}(y,q_{\pm})}\right)_+;
\ee  $C>0$  is a constant depending only on the constant in \eqref{wharnakpe} and volume doubling constant. By scaling and using Theorem \ref{thplapc} (c), we find
\[
\Vert \hat \psi_{\pm} \Vert_{L^q(B(x,1, \hat g))} = \lam^{(n-q)/(2q)}
\Vert  \psi_{\pm} \Vert_{L^q(B(x,1/\sqrt{\lam}, \td g))}
\le \lam^{(n-q)/(2q)} \beta^*_q.
\]After substituting this to \eqref{excess estimate2}, we finish the proof of the proposition.
\qed

\medskip

Step 2.2. $\e$ splitting map.

On a Riemannian manifold $(M, g)$, let us recall

\begin{definition}
\label{def e splitting}
A harmonic map $v=(v^1, v^2,\cdots, v^k):\ B(x, r)\rightarrow \mathbb{R}^k$ is an $\e$-splitting map, if\\
(1) $|\nb v|\leq 1+\e$ in $B(x, r)$;\\
(2) $\displaystyle \oint_{B(x, r)}\left|<\nb v^i, \nb v^j>-\delta_{ij}\right|^2\leq \e^2$, $\forall i,j$;\\
(3) $\displaystyle r^2\oint_{B(x, r)}|\nb^2 v^i|^2\leq \e^2$, $\forall i$.
\end{definition}

Based on the excess estimate, integral Laplace and volume comparison, and gradient bounds for harmonic functions,  the existence of $\e$ splitting map on $(M, \td g)$  essentially follows from \cite{ChCo2} except that the Ricci curvature term need to be treated differently as follows.

Since $f \in C^{1/2}$ and $|\nb f| \in L^p, \, \forall p \ge 1$ due to Theorem \ref{thplapc}, by \eqref{ricfwan4},  as far as the Ricci  curvature is concerned, we have reached the case which is essentially covered by the extended Bakry-\'Emery condition in \cite{ZqZm:1} Sec.2.2. As pointed out at the end of Sec. 6 of that paper, the existence of $\e$ splitting map is valid with the same proof. Note that the extra term $-c |\nb f|^2 g$ in $Ric_{\td g}$ is almost bounded in the sense that it is in any $L^p$ space for $p \ge 1$. Hence it can be easily be dealt with by H\"older's inequality. In fact this has been done for the case when $p= (n+\e)/2$, with any small $\e>0$,  in \cite{TZz:1}.
For completeness, we describe the treatment for the Ricci term here. The constants below depend only on the basic parameters of $g(0)$ unless stated otherwise. Their values may also change from line to line.

On a proper ball $B(x, r, \td g) \subset M$, let $\mathbf{b_{\pm}}$ be the harmonic functions with respect to $\Delta_{\td g}$ in the $\e$ splitting map. c.f. Lemma 9.13 in \cite{Ch}. They are obtained from solving the Dirichlet problem on the ball with $b_{\pm}$ in the previous sub-step as boundary values.

 We need to show that $ \Hess \, \mathbf{b_{\pm}}$  are small in $L^2$ sense in the half ball when the Kato norm $K(|Ric|^2)$ in \eqref{defkato} is small. The latter happens when one blows up the metric by direct computation. For simplicity, we drop the subscript $\pm$ from $\mathbf{b_{\pm}}$.
By Bochner's formula
\be
\lab{bochwan4}
\frac{1}{2} \Delta_{\td g} |\nb_{\td g} \mathbf{b}|^2_{\td g} = | Hess_{\td g} \, \mathbf{b}|^2_{\td g}
+ Ric_{\td g} (\nb_{\td g} \mathbf{b}, \nb_{\td g} \mathbf{b}).
\ee Choose $\eta$ to be a good cut-off function in the sense of Cheeger-Colding in the ball $B(x, 0.75 r, \td g)$ such that $\eta=1$ in $B(x, r/2, \td g)$, $|\nb_{\td g} \eta| \le c/r$ and $|\Delta_{\td g} \eta| \le c/r^2$.
 The existence of $\eta$ can be proven in a couple of ways by now. The original one relies on solving a suitable Poisson equation and its gradient estimate. One can also use the heat equation to smooth out the distance function, relying on the gradient estimate for the heat equation. Both gradient estimates are available to us due to Propositions \ref{pr3.1},  \ref{pr3.2}.
By \eqref{ricfwan4} for the Ricci curvature and \eqref{bochwan4}, we derive
\be
\al
&\int \eta | Hess_{\td g} \, \mathbf{b}|^2_{\td g} d\td g \le
\frac{1}{2} \int \eta \Delta_{\td g} \left(|\nb_{\td g} \mathbf{b}|^2_{\td g}-1 \right) d\td g \\
 &\qquad +  (n-2) \int \eta Hess_{\td g} \, f (\nb_{\td g} \mathbf{b}, \nb_{\td g} \mathbf{b}) d\td g +c \int \eta |\nb_{\td g} h|^2_{\td g}   |\nb_{\td g} \mathbf{b}|^2_{\td g} d\td g +  c \rho_0 \int \eta |\nb_{\td g} \mathbf{b}|^2_{\td g} d\td g.
\eal
\ee After integration by parts, this implies
\[
\al
&\int \eta | Hess_{\td g} \, \mathbf{b}|^2_{\td g} d\td g \le
\frac{1}{2} \int  \left(|\nb_{\td g} \mathbf{b}|^2_{\td g}-1 \right)  \Delta_{\td g} \eta d\td g \\
 &\qquad +  c \int |\nb_{\td g} f|_{\td g}  | Hess_{\td g} \mathbf{b}|_{\td g} \, |\nb_{\td g} \mathbf{b}|_{\td g}  \eta d\td g
 +c \int |\nb_{\td g} f|_{\td g} \,  |\nb_{\td g} \mathbf{b}|^2_{\td g}  |\nb_{\td g} \eta|_{\td g} d\td g \\
 & \qquad +c \int \eta |\nb_{\td g} h|^2_{\td g}   |\nb_{\td g} \mathbf{b}|^2_{\td g} d\td g +  c \rho_0 \int \eta |\nb_{\td g} \mathbf{b}|^2_{\td g} d\td g.
\eal
\] Using Cauchy-Schwarz inequality, we deduce
\be
\al
\int \eta | Hess_{\td g} \, \mathbf{b}|^2_{\td g} d\td g &\le
 \int  \left(|\nb_{\td g} \mathbf{b}|^2_{\td g}-1 \right)  \Delta_{\td g} \eta d\td g \\
 &\qquad +  c \int |\nb_{\td g} f|^2_{\td g}   \, |\nb_{\td g} \mathbf{b}|^2_{\td g}  \eta d\td g
 +c \int |\nb_{\td g} f|_{\td g} \,  |\nb_{\td g} \mathbf{b}|^2_{\td g}  |\nb_{\td g} \eta|_{\td g} d\td g \\
 & \qquad  +c \int \eta |\nb_{\td g} h|^2_{\td g}   |\nb_{\td g} \mathbf{b}|^2_{\td g} d\td g +  c \rho_0 \int \eta |\nb_{\td g} \mathbf{b}|^2_{\td g} d\td g.
\eal
\ee It is well known from Step 2.1 that $\oint_{B(x, r, \td g)} ||\nb_{\td g} \mathbf{b}|^2_{\td g}-1| d\td g= o(1)$ c.f. Lemma 9.10 \cite{Ch} (as long as one has the integral Laplace comparison result). Also, by the gradient estimate in Proposition \ref{pr3.1}, we have $|\nb_{\td g} \mathbf{b}|_{\td g} \le c$ in
$B(x, 0.75 r, \td g)$.
Hence
\be
\al
&\oint_{B(x, r/2, \td g)}  | Hess_{\td g} \, \mathbf{b}|^2_{\td g} d\td g \le \frac{o(1)}{r^2}
+  c \oint_{B(x, r, \td g)} |\nb_{\td g} f|^2_{\td g}    d\td g
 +\frac{c}{r} \oint_{B(x, r, \td g)} |\nb_{\td g} f|_{\td g} \,  d\td g   +  c \rho_0.
\eal
\ee Here we have used  $h=e^{2 f} = \phi^2$ from \eqref{defh=} and the volume doubling property.

Since $(M, \td g)$ is compact and we are only concerned with its properties in micro scales, we take $r \le 1$ and perform a scaling
\be
\lab{defhatgb}
\hat{g} = \lam \tilde{g}, \qquad \mathbf{\hat b} = \lam^{1/2} \mathbf{b}, \qquad \lam=1/r^2.
\ee Then
\be
\al
&\oint_{B(x, 1/2, \hat g)}  | Hess_{\hat g} \, \mathbf{ \hat b}|^2_{\hat g} d\hat g \le o(1)
+  c \oint_{B(x, 1, \hat g)} |\nb_{\hat g} f|^2_{\hat g}    d\td g
 +c \oint_{B(x, r, \hat g)} |\nb_{\hat g} f|_{\hat g} \,  d\hat g   +  c \rho_0 \lam^{-1}.
\eal
\ee  According to Theorem \ref{thplapc}, there is a uniform constant $c$ such that
\[
\int_M |\nb_{\td g} f |^{n+2}_{\td g} d \td g \le c.
\]Here we just used the fact that $b_1 g \le \td g \le b_2 g$ where $b_1, b_2$ are uniform constants, from the same theorem. Hence
\[
\int_M |\nb_{\hat g} f |^{n+2}_{\hat g}  d\hat g \le c \lam^{-1} \to 0, \quad \lam \to \infty.
\]Using the $\kappa$ non-collapsing property (item (2) in this section),  we deduce the crucial smallness for the Hessian
\be
\lab{hessxiao}
\al
\int_{B(x, 1/2, \hat g)}  | Hess_{\hat g} \, \mathbf{ \hat b}|^2_{\hat g} d\hat g \le o(1), \qquad \lam \to \infty.
\eal
\ee

To summarize, we have obtained

\begin{proposition}
\lab{presplitgwan}
Given any  $\e \in (0, 1]$, there exists $\delta, r_0>0$ such that the following statement is true.  For any $r \in (0, r_0]$,  if $d_{GH}(B(x, r, \td g),  \, B_{\mathbb{R}^n}(0, r))< \delta r$, then there exists an $\e$-splitting map $\mathbf{b}$ from $B(x, r, \td g)$ to
$\mathbb{R}^n$.
\end{proposition}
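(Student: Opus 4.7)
\medskip
\noindent\textbf{Proof plan for Proposition \ref{presplitgwan}.}

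The plan is to follow the classical Cheeger--Colding construction of $\e$-splitting maps (cf.~\cite{ChCo2} and Ch.~9 of \cite{Ch}), working entirely on the conformal manifold $(M, \td g)$ and using the structural results developed above (integral Laplace comparison, mean value and gradient estimates, lower Ricci bound \eqref{ricfwan4}, and the Hessian smallness \eqref{hessxiao}) as substitutes for the standard two-sided Ricci bound. As in Step 2.2, I first rescale by $\hat g = \lam \td g$ with $\lam = 1/r^2$, so it suffices to produce, for any $\e \in (0,1]$, a $\delta>0$ and a $\lam_0>0$ such that whenever $d_{GH}(B(x,1,\hat g), B_{\IR^n}(0,1)) < \delta$ and $\lam \ge \lam_0$, an $\e$-splitting map exists on $B(x,1,\hat g)$.

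First I would use the GH-closeness to locate, for a large parameter $\Lambda$ to be chosen, $n$ pairs of approximate poles $q_i^\pm \in M$ with $d_{\hat g}(x, q_i^\pm) \sim \Lambda^{1/2}$ and with the $\IR^n$-images of the corresponding Busemann-like directions $\nb_{\hat g} d_{\hat g}(\cdot, q_i^\pm)$ nearly orthogonal at $x$. Set $b_i^\pm(y) = d_{\hat g}(y,q_i^\pm) - d_{\hat g}(x,q_i^\pm)$; the GH approximation forces $\hat e_i(x) := b_i^+(x)+b_i^-(x) \le \e'$ with $\e' \to 0$ as $\delta \to 0$. Then I would solve the Dirichlet problem
\[
\Delta_{\hat g}\mathbf b^i = 0 \ \text{in}\ B(x,2,\hat g), \qquad \mathbf b^i = b_i^+ \ \text{on}\ \partial B(x,2,\hat g),
\]
and take $\mathbf b = (\mathbf b^1,\ldots,\mathbf b^n)$ as the candidate splitting map. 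The uniform $L^\infty$ closeness $\|\mathbf b^i - b_i^+\|_\infty = o(1)$ on $B(x,1,\hat g)$ is obtained by combining the scaled excess estimate (Proposition~\ref{excess2}) — whose right side tends to zero once $\Lambda, \lam_0$ are large and $\delta$ small, using Theorem~\ref{thplapc}(c) together with the fact that the scaling factor $\lam^{(n-q)/(2q)} \to 0$ — with the maximum principle applied to $\mathbf b^i - b_i^+$ against the weak upper bound $\Delta_{\hat g} b_i^+ \le (n-1)/d_{\hat g}(\cdot, q_i^+) + \hat\psi_+$.

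Conditions (1) and (3) of Definition~\ref{def e splitting} are then close to hand. For (1), the a priori bound $|\nb_{\hat g} b_i^+|_{\hat g} \le 1$ together with the gradient estimate of Proposition~\ref{pr3.1} applied to $\mathbf b^i$ on the ball $B(x,1,\hat g)$ (with the $L^2$ control on $\mathbf b^i$ upgraded to an $L^\infty$ control via the mean-value inequality \eqref{mviduwan2.0}) yields $|\nb_{\hat g}\mathbf b^i|_{\hat g} \le 1+\e$ provided $\Lambda, \lam_0$ are chosen appropriately. Condition (3) is precisely what was established in \eqref{hessxiao} from the Bochner formula on $(M,\hat g)$, the Ricci lower bound \eqref{ricfwan4}, a Cheeger--Colding good cut-off function (whose construction is justified by Propositions~\ref{pr3.1}--\ref{pr3.2}), and the decay $\int_M |\nb_{\hat g}f|_{\hat g}^{n+2}\,d\hat g = O(\lam^{-1})$ coming from Theorem~\ref{thplapc}(b) after scaling.

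The main technical obstacle lies in condition (2), the $L^2$ near-orthogonality $\oint|\langle\nb_{\hat g}\mathbf b^i,\nb_{\hat g}\mathbf b^j\rangle_{\hat g} - \delta_{ij}|^2 d\hat g \le \e^2$. The standard route is to (a) establish approximate orthogonality at the macroscopic scale $\Lambda^{1/2}$ from the GH choice of poles, (b) polarize the Bochner identity to show that $\langle\nb_{\hat g}\mathbf b^i,\nb_{\hat g}\mathbf b^j\rangle_{\hat g}$ is approximately subharmonic, and (c) use the weak Harnack inequality \eqref{wharnakpe} plus (3) to propagate orthogonality from $\Lambda^{1/2}$ down to the unit scale. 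The delicate point is that in our setting the polarized Bochner carries two extra error terms: the conformal Hessian correction $(n-2)\Hess_{\hat g}f(\nb_{\hat g}\mathbf b^i,\nb_{\hat g}\mathbf b^j)$ and the almost-bounded term $|\nb_{\hat g}h|^2_{\hat g}\,\langle\nb_{\hat g}\mathbf b^i,\nb_{\hat g}\mathbf b^j\rangle_{\hat g}$ (plus $\rho_0\lam^{-1}$). Both must be integrated against the good cut-off and absorbed using Hölder's inequality, the Hessian estimate \eqref{hessxiao}, and the $L^p$ control on $|\nb_{\hat g} f|_{\hat g}$ and $|\nb_{\hat g} h|_{\hat g}$ from Theorem~\ref{thplapc}(b) — in complete analogy with the treatment of the terms $T_1, T_2$ in the proof of Proposition~\ref{pr3.2}. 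Choosing $\Lambda$ large, then $\lam_0 = r_0^{-2}$ large, and finally $\delta$ small in the standard order closes the argument.
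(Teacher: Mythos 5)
Your plan is correct and follows essentially the same route as the paper: the paper's Step 2.2 likewise defers the standard Cheeger--Colding pipeline (poles, excess estimate, Dirichlet-harmonic replacements, gradient bounds) to \cite{ChCo2}, \cite{Ch} and \cite{ZqZm:1}, and details only the new ingredient, namely the Hessian smallness \eqref{hessxiao} obtained from Bochner's formula, the Ricci lower bound \eqref{ricfwan4}, a good cut-off, and the scaled decay of the $\nb f$, $\nb h$ terms via Theorem \ref{thplapc} and H\"older. Your treatment of conditions (1)--(3), including absorbing the conformal error terms exactly as in Propositions \ref{pr3.1}--\ref{pr3.2}, matches the paper's argument.
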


\medskip

Step 2.3. segment inequality.  One can obtain a version of segment inequality using exactly the argument in \cite{TZz:1} Sec. 2.4, which is sufficient for our purpose. Comparing with the standard segment inequality by Cheeger-Colding, an additional assumption on the boundedness of the integrand is made.  Even better, with just the integral Laplace comparison result in Theorem \ref{thplapc}, a stronger segment inequality can be proven by the method in a more recent paper \cite{Chln} Theorem 1.1. Here it is only assumed that the line integrals of the integrand is bounded. We mention that the main assumption in that paper is that the Ricci curvature is in certain integral space. However, what is really needed is the integral Laplace comparison, as explained below.

The following is a rephrase of a special case of Theorem 1.1 \cite{Chln}. The proof is given in Sec. 3.1 of that paper, with the parameter $H$ there taken as $0$.

\begin{proposition} (segment inequality)
\lab{prsegineq} Let $B(x, 2r) \subset M$ which is a complete $n$ manifold and $u: M \to \mathbb{R} $ be a continuous function. Consider, for $y, z \in M$ the line integrals
\[
F_u(y, z) = \inf \{ \int_{\gamma} u \, | \, \gamma  \, \text{is a minimal geodesic connecting} \, y, z \}.
\]Suppose $|F_u(y, z)| \le \mathbb{C}_0$ for each $(y, z) \in B(x, 2 r) \times B(x, 2r)$.
Then for any two measurable sets $A_1, A_2 \subset B(x, r)$, we have
\[
\int_{A_1 \times A_2} |F_u(y, z)| dg(y) dg(z) \le 2^{n+1} r  (|A_1| +|A_2|) \left( \int_{B(x, 2r)} |u| dg + \mathbb{C}_0  \sup_{y \in B(x, r)} \int_{B(x,  2r)} \psi_+ dg \right)
\]where $\psi_+(\cdot) = (\Delta d(\cdot, y) - \frac{n-1}{d(\cdot, y)})_+$.
\end{proposition}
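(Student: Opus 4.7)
The approach is to adapt the classical Cheeger--Colding segment inequality, replacing the pointwise Bishop--Gromov Jacobian comparison by the integral Laplace comparison of Theorem~\ref{thplapc}. The extra hypothesis $|F_u|\le \mathbb{C}_0$ exactly compensates for the weaker comparison: on pairs $(y,z)$ whose minimizing geodesic traverses a region where $\psi_+$ is large, $F_u$ is directly bounded by $\mathbb{C}_0$ and no Jacobian control is needed, so the error produced by the integral comparison is absorbed into the $\mathbb{C}_0\sup_y\int\psi_+$ term in the conclusion.

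By the symmetry $F_u(y,z)=F_u(z,y)$, splitting each minimizing geodesic at its midpoint, and noting that $\gamma_{y,z}\subset B(x,2r)$ when $y,z\in B(x,r)$, it suffices to prove, for fixed $y\in A_1$,
\[
\int_{A_2}\int_{d(y,z)/2}^{d(y,z)}|u|(\gamma_{y,z}(t))\,dt\,dg(z)\le 2^n r\left(\int_{B(x,2r)}|u|\,dg+\mathbb{C}_0\int_{B(x,2r)}\psi_+(\cdot,y)\,dg\right),
\]
then integrate in $y\in A_1$ using $\int_{A_1}\int\psi_+(\cdot,y)dg\le |A_1|\sup_y\int\psi_+(\cdot,y)dg$ to extract the supremum, producing the $|A_1|$-share. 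An analogous argument interchanging $y\leftrightarrow z$ and using polar coordinates at $z$ for the other half of the geodesic gives the $|A_2|$-share, yielding the $(|A_1|+|A_2|)$ prefactor. To prove the pointwise-in-$y$ estimate, switch to polar coordinates at $y$: $z=\exp_y(s\theta)$ with $s=d(y,z)$, $\theta\in\mb{S}^{n-1}$, and Jacobian $J_y(s,\theta)$. Fubini in the $(s,t)$ variables (for fixed $t$, $s\in[t,2t]$) converts the left side to
\[
\int_{\mb{S}^{n-1}}\int_0^{r_\ast}|u|(\exp_y(t\theta))\left[\int_t^{\min(2t,r_\ast)}\chi_{A_2}(\exp_y(s\theta))\,J_y(s,\theta)\,ds\right]dt\,d\theta,
\]
and the radial Jacobi ODE $\partial_s\log J_y=(n-1)/s+\psi$ with $\psi\le\psi_+$ yields
\[
J_y(s,\theta)\le 2^{n-1}J_y(t,\theta)\exp\Bigl(\int_t^s\psi_+(\tau,\theta,y)\,d\tau\Bigr),\qquad s\in[t,2t].
\]

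Next, decompose the pairs into a ``good'' set $\mathcal{G}=\{(y,z):\int_{\gamma_{y,z}}\psi_+(\cdot,y)\,d\ell\le 1\}$ and its bad complement $\mathcal{B}$. On $\mathcal{G}$ the exponential factor is bounded by $e$, so the bracket above is dominated by $2^{n-1}e\cdot t\cdot J_y(t,\theta)$; inserting this and running Fubini back to ambient coordinates gives the main bound $2^{n-1}e\cdot r\int_{B(x,2r)}|u|\,dg$. On $\mathcal{B}$ the exponential is uncontrolled, but we invoke $|F_u|\le\mathbb{C}_0$ directly: the bad contribution is at most $\mathbb{C}_0$ times the measure of $\mathcal{B}$. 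Markov's inequality bounds this measure by $\int_{A_1}\int_{A_2}\int_{\gamma_{y,z}}\psi_+(\cdot,y)\,d\ell\,dg(z)dg(y)$, and a Fubini rearrangement of the inner integral in polar at $y$ (swapping $s$ and $\tau$, then bounding the Jacobian-weighted arc-length of $A_2$ along each ray using the Bishop--Gromov comparison on the ``good directions'' and $|F_u|\le\mathbb{C}_0$ on the rest) converts the line integral into $Cr\int_{B(x,2r)}\psi_+(\cdot,y)\,dg$. Summing good and bad contributions yields the pointwise-in-$y$ estimate.

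The main obstacle is the Jacobian exponential $\exp(\int_t^s\psi_+)$, which is absent in the classical Cheeger--Colding setting under a Ricci lower bound. Neither the hypothesis $|F_u|\le\mathbb{C}_0$ alone nor the integral Laplace bound of Theorem~\ref{thplapc} alone suffices; both must be used in tandem, with the good/bad decomposition carefully matched to the Fubini identity that turns a line-integrated $\psi_+$ into the volume integral $\int_{B(x,2r)}\psi_+(\cdot,y)dg$ appearing in the statement. This is precisely the argument worked out in Section~3.1 of \cite{Chln} with the parameter $H$ set to $0$, and Proposition~\ref{prsegineq} is obtained as that specialization.
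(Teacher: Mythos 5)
The paper does not actually prove this proposition: it is quoted as a special case of Theorem 1.1 of \cite{Chln} (Section 3.1 there, with $H=0$), so the only thing to check is whether your reconstruction of that argument is sound. It is not, at one essential point. Your mechanism is: Gronwall the radial equation $\partial_s\log J_y=\frac{n-1}{s}+\psi$ to get $J_y(s,\theta)\le 2^{n-1}J_y(t,\theta)\exp\bigl(\int_t^s\psi_+\bigr)$, split pairs into a good set $\mathcal G$ (where $\int_{\gamma_{y,z}}\psi_+\le 1$) and a bad set $\mathcal B$, and on $\mathcal B$ pay $\mathbb{C}_0|\mathcal B|$ with $|\mathcal B|$ controlled by Markov through $\int\int\int_{\gamma_{y,z}}\psi_+\,d\ell$. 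That last quantity is itself a segment-type integral, now for the function $\psi_+$, for which you have neither an $F$-bound (the hypothesis $|F_u|\le\mathbb{C}_0$ concerns $u$, not $\psi_+$) nor any Jacobian comparison: writing it in polar coordinates at $y$ and swapping $s$ and $\tau$ leaves you with $\int_\tau^{r_*}\chi_{A_2}(s\theta)\,\mathcal A(s,\theta)\,ds$, and to turn this into $C\,r\,\mathcal A(\tau,\theta)$ (hence into $r\int_{B(x,2r)}\psi_+\,dg$) you would need exactly the pointwise comparison $\mathcal A(s)\lesssim\mathcal A(\tau)$ for $s\ge\tau$ that is unavailable here --- there is no Ricci lower bound, so your appeal to ``Bishop--Gromov on the good directions and $|F_u|\le\mathbb{C}_0$ on the rest'' is circular. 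The bad-set estimate therefore does not close, and iterating the good/bad decomposition only reproduces the same uncontrolled term.

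The argument that actually yields the stated error term $\mathbb{C}_0\sup_y\int_{B(x,2r)}\psi_+\,dg$ (and is what \cite{Chln} does) never exponentiates. One uses the linear form of the comparison, $\partial_s\bigl(\mathcal A(s,\theta)s^{1-n}\bigr)\le\psi_+\,\mathcal A(s,\theta)s^{1-n}$, integrated from $t$ to $s\in[t,2t]$, so that after the Fubini swap the contribution splits into $2^{n-1}t\,\mathcal A(t,\theta)$ (giving $c\,r\int_{B(x,2r)}|u|\,dg$) plus a term in which $\psi_+(\tau,\theta)\mathcal A(\tau,\theta)$ carries the volume element and the factor $\int_{\tau/2}^{\tau}u(t\theta)\,dt$ is bounded by the hypothesis $|F_u|\le\mathbb{C}_0$ along the (a.e.\ unique) minimal ray; this is precisely how $\mathbb{C}_0$ gets multiplied by $\int_{B(x,2r)}\psi_+(\cdot,y)\,dg$, linearly and without any decomposition of pairs. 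So your overall framing (midpoint splitting, polar coordinates, Fubini, using Theorem \ref{thplapc} only through $\psi_+$) is right, but the Gronwall-plus-Markov core step would fail and also misdescribes the cited proof; replace it with the un-Gronwalled comparison paired directly against the $F_u$ bound, or simply defer to \cite{Chln} Section 3.1 as the paper does.
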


For us, $g$ in the segment inequality is the $\td g= h g$. Due to Theorem \ref{thplapc} here, the last integral in the segment inequality becomes small if one blows up the metric, which is crucial for us.
\medskip

Step 2.4. With the $\e$ splitting map and segment inequality, volume convergence and almost splitting properties for $(M, \td g)$  can be obtained in the same way as \cite{Co} or \cite{Ch} Sec. 9, Theorem 9.31. See also Theorem 1.3 in \cite{Chln}.

Cone rigidity and
metric cone properties essentially follows from  \cite{ChCo1}, \cite{ChCo2} except that the Ricci curvature term need to be treated differently as in Step 2.2. See Section 6 in \cite{ZqZm:1}, especially Theorems 6.3, 6.5 and 6.6 and the comment after Theorem 6.7, and also Theorem 1.3 in \cite{Chln}  e.g.

For completeness we indicate  the necessary changes in proof of  the key cone rigidity property modeled on Theorem 6.3 of \cite{ZqZm:1}.

\begin{proposition}[Cone rigidity]
\label{pr cone rigidity}

Let $(M, \td g)$ be given in \eqref{gtgwanh2f}.
For any $\delta \in (0, 1]$,  there exists a uniform $r_0>0$ depending  only on $\delta$ and the basic parameters of $g(0)$ such that the following statement is true.  If for $r \in(0, r_0]$,
\be\label{volume condition}
(1-\delta)  |B(x,r, \td g)|_{\td g} \leq \frac{r}{n}  |\pa B(x,r, \td g)|_{\td g},
\ee
then for some $\psi_*=\psi_*(\delta)$ which goes to $0$ as $\delta$  goes to $0$, we have
\be
\label{GH close}
d_{GH}(B(x, r, \td g ), C_{0,r}(Z)) \leq \psi_* r,
\ee
where $Z$ is a length space with
\be\label{diam Z}
diam(Z)\leq \pi+\psi_*,
\ee and $C_{0,r}(Z)$ is a cone over $Z$.
\end{proposition}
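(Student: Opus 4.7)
The plan is to adapt the Cheeger--Colding cone rigidity argument to the conformal metric $\td g$ by using the modified curvature--dimension inequality \eqref{modcdineq} in place of a pointwise Ricci lower bound, and invoking the integral Laplace comparison of Theorem \ref{thplapc}(c) together with the scaling behaviour established in Proposition \ref{excess2} and in Step 2.2. First I would rescale: set $\hat g=\lambda\td g$ with $\lambda=1/r^2$ so that the ball becomes unit size, noting that by Remark \ref{blowupconst} all basic parameters survive the rescaling, while the $L^q$ norm of $\psi_+$, the $L^p$ norm of $|\nb_{\hat g}f|_{\hat g}$, and the constant $\rho_0/\lambda$ all tend to zero as $r\to 0$. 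In this scaled picture the goal becomes: assuming the annular volume ratio hypothesis \eqref{volume condition} with a small parameter, produce a GH approximation of $B(x,1,\hat g)$ by a cone $C_{0,1}(Z)$.

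Next I would follow the classical three--step path. Step A: build a good approximate radial function. Solve $\Delta_{\hat g}u=n$ on $B(x,2,\hat g)$ with boundary value $\hat r^{2}/2$, where $\hat r=d_{\hat g}(x,\cdot)$. Using Theorem \ref{thplapc}(c) in the scaled form $\|\psi_+\|_{L^q(B(x,2,\hat g))}\le \beta_q^*\lambda^{(n-q)/(2q)}=o(1)$, together with the weak Harnack inequality \eqref{wharnakpe} and the Poincar\'e inequality (5) at the beginning of Section 3, one gets $\oint_{B(x,1,\hat g)}|u-\hat r^{2}/2|\,d\hat g=o(1)$, exactly as in \cite{ChCo2} (compare Step 2.1 in the text, whose proof uses only the weak Harnack inequality and volume doubling). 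Step B: derive the $L^2$ Hessian smallness
\begin{equation*}
\oint_{B(x,1,\hat g)}\bigl|\operatorname{Hess}_{\hat g}u-\hat g\bigr|^{2}_{\hat g}\,d\hat g=o(1).
\end{equation*}
For this I apply the modified curvature--dimension inequality \eqref{modcdineq} to $u$. The harmful Ricci term has already been cancelled by the choice $h=e^{2f}=\phi^{2}$; what remains are the extra terms $\langle\nb_{\td g}(h\Delta_{\td g}u-\tfrac{n-2}{2}\langle\nb_{\td g}h,\nb_{\td g}u\rangle_{\td g}),\nb_{\td g}u\rangle_{\td g}$, $\langle\nb_{\td g}(h|\nb_{\td g}u|^2_{\td g}),\nb_{\td g}f\rangle_{\td g}$, and the term $\rho_0 h|\nb_{\td g}u|^2_{\td g}$. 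Multiplying by a good Cheeger--Colding cut--off $\eta$ (whose existence under only the gradient estimates of Propositions \ref{pr3.1}, \ref{pr3.2} was already used in Step 2.2) and integrating by parts as in Step 2.2 reduces the error to integrals of $|\nb_{\td g}f|_{\td g}$, $|\nb_{\td g}h|_{\td g}$ and $\rho_0$, each of which becomes $o(1)$ under the scaling by Theorem \ref{thplapc}(b) and the $L^p$ bounds of Propositions \ref{prphijie}--\ref{prhphi2}.

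Step C: translate Hessian smallness into cone structure. This is the Cheeger--Colding Pythagoras--type argument: the segment inequality of Proposition \ref{prsegineq}, applied with integrand $|\operatorname{Hess}_{\hat g}u-\hat g|^2_{\hat g}$, shows that along most $\hat g$--minimizing geodesics from $x$, the radial function $u$ is $L^2$--close to a quadratic in arclength. Defining $Z$ as the set of equivalence classes of rays at $x$ with the induced intrinsic metric, one then builds the GH map $B(x,1,\hat g)\to C_{0,1}(Z)$ by sending $y$ to $(\hat r(y),[\gamma_{x,y}])$ and verifies the approximation property via the segment inequality, exactly as in Theorem 6.3 of \cite{ZqZm:1}. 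The bound \eqref{diam Z} follows because the volume hypothesis \eqref{volume condition} combined with the relative volume bound of Corollary \ref{volcomp}(a) forces the angular diameter to be at most $\pi+o(1)$. Undoing the rescaling gives \eqref{GH close} on the original scale $r$.

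The main obstacle will be Step B: making sure that the extra conformal error terms in \eqref{modcdineq2} can be integrated by parts against $\operatorname{Hess}_{\td g}u$ and controlled by $o(1)$ factors without losing the crucial $\eta\to 1$ limit. This is where the fact that $|\nb f|\in L^{p}$ for every finite $p$ (not just some fixed $p$) and the uniform $C^{1/2}$ bound on $h$ are used decisively, together with H\"older's inequality applied at a large but fixed $p$; essentially the same mechanism as in the proof of Proposition \ref{pr3.1} and as indicated at the end of Step 2.2.
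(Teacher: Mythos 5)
Your Step A has a genuine gap: the hypothesis \eqref{volume condition} is never used where it is indispensable, namely in proving that the Poisson solution is close to the model quadratic. The integral Laplace comparison of Theorem \ref{thplapc}(c) plus the weak Harnack inequality only give \emph{one-sided} control, $\Delta_{\hat g}u-\Delta_{\hat g}(\hat r^2/2)\ge -c\,\hat r\,\hat\psi_+$ together with an upper bound for $u$; they cannot by themselves force $u$ to be close to $\hat r^2/2$ in $L^1$ or $W^{1,2}$ (this is exactly the content of ``almost volume cone implies almost metric cone'' and fails without the volume pinching). The missing ingredient is the divergence-theorem computation in which the boundary term $\frac1n|\pa B|$ appears: in the paper's proof this is \eqref{integral upper bound Delta f-U}, where \eqref{volume condition} is used to bound $\oint\bigl(\Delta F-\Delta\frac{r_x^2}{2n}\bigr)$ from above by $\delta$, and the resulting two-sided integral control is what produces the $W^{1,2}$ and then $L^\infty$ closeness of $F$ to $r_x^2/(2n)$. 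Your citation of the excess estimate (Step 2.1) is not a substitute: that estimate controls the excess function via the weak Harnack inequality and says nothing about the Dirichlet solution. Using \eqref{volume condition} only at the very end, for the diameter bound on $Z$, does not repair this; as written the closeness claimed in your Step A would have to hold for \emph{every} small ball, independently of $\delta$, which is false.

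Two further points of comparison. First, for the Hessian estimate the paper does not use the modified curvature--dimension inequality \eqref{modcdineq}; it applies Bochner's formula for $\hat g$ together with the Ricci lower bound \eqref{ricfwan4} and, crucially, the a priori bound \eqref{hessfm1} on $\int_{B(x,r)}|\Hess f|\,dg$, which after scaling contributes a factor $r^{1/2}$. Your route through \eqref{modcdineq} (cancelling the Ricci term by the choice of $h$ and absorbing the conformal error terms as in Step 2.2) is a plausible variant, but it inherits the same quantitative bookkeeping that your sketch omits: the good cut-off has $|\Delta\phi|\le C/\psi_3^2$ and the interior gradient bound for the Poisson solution degenerates near $\pa B$, so the error terms carry negative powers of a $\delta$-dependent parameter. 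The paper resolves this by an explicit hierarchy ($\psi_3\ge\psi_2^{1/3}$, $r_0\le\psi_3^{6}$, e.g.\ $\psi_3=1/|\ln\delta|$, $r_0=\delta$) ensuring $\psi_*(\delta)\to0$; simply asserting that everything is $o(1)$ as $r\to0$ does not by itself produce a $\psi_*$ depending only on $\delta$. Your Step C (segment inequality plus the Cheeger--Colding cone construction) agrees with the paper's concluding remark and is fine once Steps A and B are repaired.
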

\proof

By scaling, it is enough to consider the scaled metric $\hat g = r^{-2} \td g$ and set $r=1$. For simplicity of presentation, we will drop the hat from $\hat g$ in the rest of the proof. So $g$ actually means $\hat g$. Also, if no confusions arise will omit the reference to the metric in all geometric quantities, unless stated otherwise.
In addition all the constants depend only on the basic parameters of $g(0)$

According to \cite{ChCo1}, the key step is to show that the metric is close to the Hessian of the function below in $L^2$ sense under the condition of the proposition.

Given $x\in M$, let $F$ be the smooth solution of the following Dirichlet problem:
\[
\Delta F=1\ in\ B(x,1);\ F |_{\pa B(x,1)}=\frac{1}{2n}.
\]
By using the Green's function $\Gamma_{B(x,1)}$, we can write $F$ as
\be\label{mean value f}
F(z)=\frac{1}{2n}-\int_{B(x,1)}\Gamma_{B(x,1)}(z,y)dg(y).
\ee
By the upper bound of the heat kernel, one obtains a lower bound of $F$, as in Theorem 6.3 of \cite{ZqZm:1}
\be
\label{f lower bound}
\al
F(z)\geq -c_1
\eal
\ee which is a uniform constant. Note that the heat kernel bound persists under blowing up of metrics.

For an upper bound of $F$, let $r_x=d(z, x)$ be the distance function from $x$. Then
\be\label{Delta f-U}
\al
\Delta F-\Delta\frac{r_x^2}{2n}= 1-\frac{r_x}{n}\Delta r_x-\frac{1}{n}
\geq -\frac{r_x}{n}\Psi,
\eal
\ee
where $\Psi=\left(\Delta r_x-\frac{n-1}{r_x}\right)_+$.

According to the inhomogeneous maximum principle for $F-\frac{r_x^2}{2n}$, which can be proven as
Proposition
\ref{pr3.1}, we get
\be\label{f upper bound}
\al
F(z)\leq & \frac{r_x^2}{2n}+C \Vert  \Psi \Vert_{L^q(B(x,1))}, \qquad q >n /2.
\eal
\ee

Next, we show that $F$ and $\frac{r_x^2}{2n}$ are close in integral sense. By the volume condition \eqref{volume condition} in the proposition,
\be
\label{integral upper bound Delta f-U}
\al
\int_{B(x,1)} \left(\Delta F-\Delta \frac{r_x^2}{2n} \right) =&|B(x,1)|-\int_{\partial B(x,1)}
<\nb \frac{r_x^2}{2n}, \nb r_x> dS\\
=& |B(x,1)|-\frac{1}{n} |\pa B(x,1)|\\
\leq & \delta |B(x,1)|.
\eal
\ee
From \eqref{Delta f-U}, we can also deduce the lower bound
\be\label{integral lower bound Delta f-U}
\al
\oint_{B(x,1)} \left(\Delta F-\Delta \frac{r_x^2}{2n} \right) \geq& -\oint_{B(x,1)}
-\frac{r_x}{n}\Psi \ge -C \Vert r_x \Psi \Vert_{L^q(B(x,1))}
\eal
\ee
Therefore, we have
\[
-C \Vert r_x \Psi \Vert_{L^q(B(x,1))}
\leq \oint_{B(x,1)} \left( \Delta F-\Delta \frac{r_x^2}{2n} \right)
\leq \delta.
\]

Also, in \eqref{f upper bound}, setting
$A \equiv C \Vert  \Psi \Vert_{L^q(B(x,1))},
$ we find
\be
\lab{r2x-fa}
\frac{r_x^2}{2n}-f+A \geq0.
\ee
Combining the inequality above and \eqref{f lower bound},  \eqref{Delta f-U}, \eqref{integral upper bound Delta f-U}, we have
\[
\al
0\leq&\oint_{B(x,1)}\left[\Delta (F-\frac{r_x^2}{2n})+\frac{r_x}{n}\psi\right]\left[\frac{r_x^2}{2n}-F+A\right]\\
= &\oint_{B(x,1)} A\Delta (F-\frac{r_x^2}{2n})+(\frac{r_x^2}{2n}-F)\Delta (F-\frac{r_x^2}{2n})+\frac{r_x}{n}\psi\left[\frac{r_x^2}{2n}-F+A\right]\\
\leq& \delta A-\oint_{B(x,1)} \left( |\nb (F-\frac{r_x^2}{2n})|^2+ C [A+c_1] \Psi \right).
\eal
\]
It follows that
\be\label{W2 bound U-f}
\al
\oint_{B(x,1)} |\nb (F-\frac{r_x^2}{2n})|^2\leq& \delta A+ C [A+c_1] A.
\eal
\ee
Therefore, by the Poincar\'e inequality, we have
\be\label{L2 bound U-f}
\oint_{B(x,1)}\left(\frac{r_x^2}{2n}-F \right)^2 \leq c \delta A+ c C [A+c_1] A \equiv \psi_1.
\ee
From \eqref{L2 bound U-f}, volume comparison theorem, mean value inequality, and the bounds of $F$ (\eqref{f lower bound} and \eqref{f upper bound}), it is not hard to see that
\be\label{bound f-U}
\left|\frac{r_x^2}{2n}-F\right|\leq \psi_2, \ on\ B(x,(1-\psi_2)),
\ee
where $\psi_2=\psi^{1/(n+1)}_1$.

Next, we show the closeness of $\Hess F$ and the metric $g$ in the average sense in a slightly smaller ball. Note the combination of  \eqref{bound f-U} and \eqref{W2 bound U-f} gives
\be\label{L1 bound df-dU}
\oint_{B(x,(1-\Psi_2))}\left||\nb F|^2-\frac{2F}{n}\right|\leq \psi_2.
\ee The reason is that they imply
\[
\al
&\oint_{B(x,(1-\Psi_2))}\left| |\nb F|^2-\frac{2F}{n}\right |
\le \oint_{B(x,(1-\Psi_2))}\left| |\nb F|^2-\frac{r^2_x}{n^2}\right | + \psi_2\\
&\le \left(\oint_{B(x,(1-\Psi_2))}\left| |\nb F|+ \frac{r_x}{n}\right |^2 \right)^{1/2}
\left(\oint_{B(x,(1-\Psi_2))}\left| |\nb F|- \frac{r_x}{n}\right |^2 \right)^{1/2} + \psi_2\\
&\le C \left(\oint_{B(x,(1-\Psi_2))}\left| |\nb (F- \frac{r^2_x}{2 n})\right |^2 \right)^{1/2} + \psi_2 \le C \sqrt{\psi_1} + \psi_2 \le \psi_2.
\eal
\]Here we have adjusted $\psi_2$ by a multiplicative constant.

Pick, for now, a function
\be
\lab{psi32}
\psi_3=\psi_3(\delta) \ge  \psi^{1/3}_2,
\ee whose values will be specified later. Let $\phi$ be the good cut-off function in $B(x, 1-0.5 \psi_3)$ such that $\phi\big|_{B(x,(1-\psi_3))}=1$ and $|\Delta \phi| \le C/\psi^2_3$. From \eqref{L1 bound df-dU} and Bochner's formula, we have
\be\label{Hess f 1}
\al
C \psi_3 \geq& \frac{1}{2}\oint_{B(x,1)}\Delta \phi\cdot(|\nb F|^2-\frac{2F}{n})\\
\geq&\frac{1}{2}\oint_{B(x,1)}\phi\cdot\Delta(|\nb F|^2-\frac{2F}{n})\\
=&\oint_{B(x,1)}\phi\left(2 |\nb^2 F|^2+ 2 Ric(\nb F,\nb F)-\frac{2}{n}\right).
\eal
\ee

That is
\[
\al
\oint_{B(x,1)} &\phi\left[|\nb^2 F|^2-\frac{1}{n}\right]
\leq C \psi_3 - \oint_{B(x,1)} \phi  Ric(\nb F,\nb F)\\
&\le C \psi_3 +   \sup_{B(x, 1-0.5 \psi_3)} |\nb_{\hat g} F|^2_{\hat g} \oint_{B(x,1, \hat g)}   |Ric^-_{\hat g}|_{\hat g} d\hat g\\
&=C \psi_3 +   \sup_{B(x, 1-0.5 \psi_3)} |\nb_{\hat g} F|^2_{\hat g} r^2 \oint_{B(x,r, \td g)}   |Ric^-_{\td g}|_{\td g} d \td g \\
&\le
C \psi_3 +   \frac{C  r^2}{\psi^2_3} \oint_{B(x,r, \td g)}   |Ric^-_{\td g}|_{\td g} d \td g
\eal
\]
In the above we have used the interior gradient estimate for $F$, which can be obtained in the same way as Proposition \ref{pr3.1} since the right hand side of the equation for $F$ is the constant $1$.
Recalling the lower bound on the Ricci curvature
\eqref{ricfwan4}
\[
Ric_{\td g} \ge  - (n-2) Hess_{\td g} \, f -c |\nb_{\td g} h|^2_{\td g} \,  \td g- c \rho_0 \td g,
\]we find that
\[
\al
&\oint_{B(x,1)}\phi\left[|\nb^2 F|^2-\frac{1}{n}\right]\\
&\leq C \psi_3 + \frac{C  r^2}{\psi^2_3} \oint_{B(x,r, \td g)}  \left(|Hess_{\td g} \, f |_{\td g} + |\nb_{\td g} h|^2_{\td g} +  \rho_0 \right)  d \td g.
\eal
\] Using  \eqref{hessfm1}, which is an a priori bound, together with the $L^q$ bound on $|\nb f|$ in Theorem \ref{thplapc}, we deduce
\[
\oint_{B(x,1)}\phi\left[|\nb^2 F|^2-\frac{1}{n}\right] \le C \psi_3 + \frac{C r^{0.5}}{\psi^2_3} \le 2 C \psi_3.
\]Here we have chosen $r \le r_0$ with $r_0 \le \psi^6_3$.
Now we have to show that $\psi_3=\psi_3(\delta)$ can be chosen so that it goes to $0$ as $\delta \to 0$. To do this, let us recall that from \eqref{r2x-fa} that
\[
A \equiv C \Vert  \Psi \Vert_{L^q(B(x,1))} \le C r^{1-(n/q)}, \, q \in (n, 2n),
\]where we used Theorem \ref{thplapc} and scaling again. Also recall from
\eqref{L2 bound U-f} that
\[
\psi_1= c \delta A+ c C [A+c_1] A
\]and from \eqref{bound f-U},
\[
\psi_2 = \psi^{1/(n+1)}_1 \le c [\delta r^{1-(n/q)} + c r^{1-(n/q)}]^{1/(n+1)}.
\]From \eqref{psi32}, we just need $\psi_3 \ge \psi^{1/3}_2$. Hence we can always ensure that $\psi_3 \to 0$ as $\delta \to 0$ by choosing $r_0$ sufficiently small one more time,
for example $\psi_3=1/|ln \delta|$, $r_0=\delta$.

Thus, after adjusting the constant, we deduce,
\be\label{bound Hess f}
\al
&\oint_{B(x,(1-\Psi_3))}\left|\nb^2F-\frac{1}{n}g_{ij}\right|^2=
\oint_{B(x,(1-\Psi_3))}\left|\nb^2F-\frac{\Delta F}{n}g_{ij}\right|^2\\
&\leq\oint_{B(x,1)}\phi\left[|\nb^2 F|^2-\frac{1}{n}\right]\leq \psi_3.
\eal
\ee
The rest of the proof is standard, using the segment inequality in step 2.3.

\qed

\medskip

Step 2.5. Given a sequence $t_i \to \infty$, let $\td g_i = h_i(\cdot) g(t_i)$ where $h_i$ is the conformal factor determined from Theorem \ref{thplapc} for $g(t_i)$. With Step 2.4 completed, co-dimension 2 property for singular sets of the G-H limit spaces of a subsequences of $\{ \td g_i \}$ follow directly from the arguments in  \cite{ChCo2}. See also Sec. 10 in \cite{Ch}.
 Recall that the conformal function $h_i(\cdot)$ satisfies Theorem \ref{thplapc} (a) and (b). Therefore,  the co-dimension 2 property for the singular set of the G-H limit of the sequence $\{  g(t_i) \}$ from the original KRF also holds.

\medskip

{\it Step 3.} backward pseudo-locality, smooth convergence except on co-dimension $2$ set.

In this step we come back to the original KRF.  We will prove the following

\begin{proposition}
\lab{prch-cogi}
 Suppose $(M, g(t_i)) \to_{d_{GH}} (X, d)$, a metric space. Then the following statements hold.

 (i) for any $r>0$ and $x_i \in M$ such that $x_i \to x_\infty \in X$,
 \[
 |B(x_i, r, g(t_i)|_{g(t_i)} \to |B(x_\infty, r)|_X.
 \] Here $|\cdot|_X$ is the n dimensional Hausdorff measure on $X$.

 (ii) for any $x \in X$ and any sequence $\{ r_j \}$ of positive numbers such that $r_j \to 0$, there exists a subsequence of $(Y, r^{-2}_j d, x)$ that converges, in GH sense, to a metric space $(\mathcal{C}_x, d_x, 0 )$ which is a metric cone with vertex $0$.

 (iii) $X= \mathcal{R} \cup \mathcal{S}$ where $\mathcal{S}$ is a closed set of codimension $\ge 2$ and $R$ is consisted of points whose tangent cone is $\mathbb{R}^n$.

 (iv) there exists a $C^\infty$ structure on $\mathcal{R}$ and a $C^\infty$ metric $g_\infty$ on $\mathcal{R}$ which induces the distance $d$ on $X$; moreover, $g(t_i)$ converges to $g_\infty$ in $C^{\infty}_{loc}$ topology on $\mathcal{R}$; the complex structures also converge in $C^{\infty}_{loc}$ topology.
\end{proposition}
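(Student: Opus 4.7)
\proof (Proof proposal for Proposition \ref{prch-cogi}.)

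The plan is to harvest all four conclusions from the $\td g$-analysis already completed in Steps 1--2 and then transfer them back to the original metric $g(t_i)$ using the uniform conformal-factor bounds of Theorem \ref{thplapc}. For part (iv), one additional ingredient beyond the conformal transfer is needed, namely the backward pseudo-locality theorem for the Kähler-Ricci flow.

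First I would set up the transfer. After passing to a subsequence (not relabeled), arrange simultaneous convergence $(M, \td g_i) \to_{d_{GH}} (\td X, \td d)$ and $(M, g(t_i)) \to_{d_{GH}} (X, d)$. Since $b_1 \le h_i \le b_2$ and $\Vert h_i \Vert_{C^{1/2}} \le b_3$ uniformly, Arzelà-Ascoli gives a Hölder limit $h_\infty$ on $X$ with $b_1 \le h_\infty \le b_2$, and the identity map realises a locally bi-Lipschitz identification of $(\td X, \td d)$ with $(X, d)$. Crucially, because $\mathrm{osc}_{B(x, r)} h_i = O(r^{1/2})$ uniformly, at any fixed point $x_\infty \in X$ the two metrics are infinitesimally related by the constant conformal factor $h_\infty(x_\infty)$; that is, $B(x_i, r, g(t_i))$ and $B(x_i, \sqrt{h_\infty(x_\infty)} r, \td g_i)$ differ only at order $O(r^{3/2})$.

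With this identification in hand, (i), (ii) and (iii) follow from their $\td g$-counterparts. Volume convergence for $\td g_i$ was already obtained in Step 2.4 from the integral Laplace comparison in Theorem \ref{thplapc} together with the almost-splitting theorem and Colding's volume convergence formula; since $dg = h^{-n/2} d\td g$ and $h_i \to h_\infty$ uniformly, and since $g$-balls and $\td g$-balls differ by $O(r^{1/2})$, this yields the volume convergence claimed in (i). For (ii), blowing up $g(t_i)$ by $\lam_j = r_j^{-2}$ at $x_\infty$ is, in the limit, the same as blowing up $\td g_i$ by $\lam_j/h_\infty(x_\infty)$, because the $C^{1/2}$-oscillation of $h$ disappears after the rescaling. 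Hence the tangent cone of $(X, d)$ at $x_\infty$ coincides, up to a global constant rescaling, with a tangent cone of $(\td X, \td d)$, which is a metric cone by Proposition \ref{pr cone rigidity}. For (iii), the bi-Lipschitz identification preserves the regular/singular decomposition and the Hausdorff codimension, so the codimension $\ge 2$ result for $\td g$ (Step 2.5, via \cite{ChCo2}) transfers.

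For part (iv), at a regular point $x_\infty \in \mathcal R$ with $\mathbb R^n$-tangent cone, parts (i) and (ii) give an $r_0 > 0$ such that for all large $i$ the ball $B(x_i, r_0, g(t_i))$ is $\eps r_0$-close in the Gromov-Hausdorff sense to a Euclidean ball, for any prescribed small $\eps > 0$. The key step is then to invoke the backward pseudo-locality for the normalized Kähler-Ricci flow (available thanks to the uniform scalar curvature and Ricci potential bounds in the Basic Properties, and established in this Kähler context in e.g. \cite{TZz:1}, \cite{WZ1}, \cite{CW:4}): Euclidean-closeness at time $t_i$ on scale $r_0$ implies a uniform bound $|\Rm|_{g(t)} \le K r_0^{-2}$ on $B(x_i, r_0/2, g(t_i))$ for $t \in [t_i - \eta r_0^2, t_i]$, with $\eta, K$ independent of $i$. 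Combined with $\kappa$-non-collapsedness, Hamilton's compactness theorem then extracts a further subsequence converging in $C^\infty_{loc}$ on a neighbourhood of $x_\infty$ to a smooth Kähler-Ricci flow, giving a $C^\infty$ structure and a smooth metric $g_\infty$ on $\mathcal R$; the induced distance agrees with $d$ by (i) and (ii). Because the complex structures $J_i$ are parallel and the curvature is bounded, they converge in $C^\infty_{loc}$ to the limit complex structure $J_\infty$.

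The main obstacle is the backward pseudo-locality step: standard (forward) pseudo-locality requires Euclidean-closeness as an \emph{initial} condition, so turning it into a backward-in-time curvature bound at $t = t_i$ from GH-closeness at $t_i$ is delicate and genuinely uses the Ricci flow equation together with the uniform scalar curvature bound of Perelman. Once this input is granted, the rest is a standard Cheeger-Gromov-Hamilton compactness argument combined with the conformal identification set up in the first paragraph.

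\qed
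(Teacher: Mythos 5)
Your treatment of (i)--(iii) is in line with the paper: these are exactly the statements the paper regards as already established in Step 2 for $\td g_i$ and then transferred to $g(t_i)$ via the uniform bounds $b_1\le h_i\le b_2$, $\Vert h_i\Vert_{C^{1/2}}\le b_3$, and your bi-Lipschitz/oscillation argument is essentially the same transfer the paper uses (compare the proof of Corollary \ref{volcomp}(b)).

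The genuine gap is in (iv). You reduce everything to a ``backward pseudo-locality'' statement --- Gromov--Hausdorff closeness of $B(x_i,r_0,g(t_i))$ to a Euclidean ball at the single time $t_i$ implies $|\Rm|_{g(t)}\le K r_0^{-2}$ on a backward parabolic neighborhood --- and you cite \cite{TZz:1}, \cite{WZ1}, \cite{CW:4} for it. No such off-the-shelf result is available in the generality needed here: \cite{TZz:1} works in complex dimension $3$ (where $L^4$ Ricci bounds give $L^p$, $p>n/2$, control), \cite{WZ1} assumes an $L^\infty$ bound on $\nb f$ which is precisely what is missing here, and \cite{CW:4} is the paper whose key induction step the present paper declines to rely on. Indeed GH-closeness to a Euclidean ball at one time, by itself, carries no curvature information; the available space-time regularity theorem (Corollary 1.6 of \cite{BZ:1}, which is what the paper actually invokes) requires the \emph{isoperimetric constant} of the ball to be almost Euclidean, a strictly stronger quantitative input. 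The paper bridges this gap as follows: at a regular point, volume continuity (i) gives an almost Euclidean volume ratio for one ball $B(x_i,r(p),g(t_i))$; the new volume comparison Corollary \ref{volcomp}(b) upgrades this to almost Euclidean volume ratios for \emph{all} sub-balls; Corollary 1.7 of \cite{TZq2} (Proposition \ref{prtzqco1.7}) then yields a uniform $C^{1,\a}$ harmonic radius lower bound, hence almost Euclidean isoperimetric constant; only then does \cite{BZ:1} give the backward-and-forward curvature bound, and Hamilton compactness finishes. Your proposal skips this chain entirely and, as you yourself note, leaves the crucial step as an assumption --- but that step is exactly the content of (iv), so the argument as written is incomplete. (The remaining pieces --- covering argument, smooth convergence, convergence of the parallel complex structures, soliton structure via Perelman's entropy --- are standard once the local curvature bound is in hand, as in the paper.)
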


 Note that items (i)-(iii) are already proven in Step 2. So we only need to prove (iv).

First let us remind ourself the concept of harmonic radius modeled on \cite{An:1}
\begin{definition} (maximal harmonic radius)
\lab{defharmrad}
Given  numbers $\theta>0$ and  $\a \in (0, 1)$,  and a point $x \in M$ with metric $g$.
 We define
$r^{\theta, \a}_g(x)$ to be the maximum radius $l$ such that there exists,
in the ball $B(x, l)$,  a $C^{1, \a}$ harmonic
coordinate  $\mathbf{x}=(x^1, .., x^n) :
B(x, l) \to {\bf R}^n$,  which satisfies the following properties.

\be \al \lab{th-alcond}
 & a). \quad  e^{-\theta} I \le (g_{pq})
\le e^\theta  I , \, I = (\delta_{pq}); \\
& b). \quad  \sup_{p, q}  l \Vert \partial_i g_{pq} \Vert_{C^0} \le
e^\theta \\
& c). \quad \sup_{p, q} l^{1 + \a}
 \Vert \nb g_{pq} \Vert_{C^{\a}}  \le e^{\theta}.
\eal \ee
\end{definition}

\begin{proposition}  ($C^{1, \a}$  structure of almost Euclidean region).
\lab{prtzqco1.7}
Let $({\M}, g(t))$ be a normalized K\"ahler Ricci flow whose
initial metric $g(0)$ with the basic parameters. There exist
positive numbers $\eta \in (0, 1)$, $\delta$ and $r_0$ such that
the following
 statement holds.

For any $t>0$, $x \in \M$ and $r \in (0, r_0]$, suppose the ball $B(x,
r, g(t))$ is almost Euclidean in volume, i.e.
 \be
 \lab{br>1-e}
 |B(y, r, g(t))|_{g(t)} \ge
(1-\eta) \omega_n r^n,
\ee where $\omega_n$
is the volume of $n$ dimensional Euclidean unit ball.
 Then,
\[ r^{\a, \theta}_{g(t)}(x) \ge
\delta r
\]
\end{proposition}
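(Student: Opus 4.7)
The plan is to argue by contradiction, combining the conformal Cheeger--Colding structure developed in Section~2 with a backward pseudo-locality theorem for the K\"ahler--Ricci flow. Suppose the conclusion fails. Then there exist sequences of times $t_i$, points $x_i \in M$, radii $r_i \downarrow 0$, and $\eta_i, \delta_i \downarrow 0$ such that
\[
|B(x_i, r_i, g(t_i))|_{g(t_i)} \ge (1-\eta_i)\,\omega_n\, r_i^n,
\qquad r^{\alpha,\theta}_{g(t_i)}(x_i) < \delta_i\, r_i.
\]
Rescale by $\hat g_i = r_i^{-2}\, g(t_i)$. By Remark~\ref{blowupconst}, all basic parameters are preserved under the scaling, the volume hypothesis becomes $|B(x_i, 1, \hat g_i)|_{\hat g_i} \ge (1-\eta_i)\omega_n$, and it suffices to produce a uniform positive lower bound on $r^{\alpha,\theta}_{\hat g_i}(x_i)$.

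First, I would pass to the associated conformal metrics $\td g_i = h_i \hat g_i$ furnished by Theorem~\ref{thplapc}. The uniform bounds $b_1 \le h_i \le b_2$ and $\Vert h_i\Vert_{C^{1/2}} \le b_3$ show that $\hat g_i$ and $\td g_i$ are bi-Lipschitz with uniform constants, so the volume convergence and metric cone structure for $\td g_i$ proven in Steps~2.4--2.5 transfer to $\hat g_i$. Applying the volume convergence (Colding) in the conformal setting together with the cone rigidity of Proposition~\ref{pr cone rigidity} to the almost-Euclidean ball, one concludes that $B(x_i, 1, \hat g_i)$ is $\varepsilon_i$-Gromov--Hausdorff close to the Euclidean unit ball $B_{\mathbb{R}^n}(0,1)$ with $\varepsilon_i \to 0$.

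Next, I would invoke the backward pseudo-locality for the normalized KRF, which is available because of the uniform scalar curvature bound, the uniform $C^1$ bound of the Ricci potential, and the non-collapsing/non-inflating properties in the basic property list (see \cite{TZz:1} and references there). Applied to the rescaled flow $\hat g_i(s) = r_i^{-2}\, g(t_i + r_i^2 s)$ at $s = 0$, this yields uniform constants $\tau > 0$ and $\Lambda > 0$ such that
\[
\sup_{B(x_i, 1/2,\, \hat g_i(s))} |\Rm_{\hat g_i(s)}|_{\hat g_i(s)} \le \Lambda,
\qquad s \in [-\tau, 0],
\]
for all sufficiently large $i$.

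Finally, combining this uniform curvature bound with the $\kappa$ non-collapsing in basic property~4, Anderson's $C^{1,\alpha}$ harmonic coordinate theorem \cite{An:1} produces a uniform $\delta_0 = \delta_0(\alpha, \theta) > 0$ with $r^{\alpha,\theta}_{\hat g_i}(x_i) \ge \delta_0$ for all large $i$, contradicting $r^{\alpha,\theta}_{\hat g_i}(x_i) < \delta_i \to 0$. Unscaling yields the claim. The main obstacle is the backward pseudo-locality step, since Perelman's classical pseudo-locality runs forward in time; here one exploits the special structure of the normalized KRF (bounded scalar curvature and bounded Ricci potential) to propagate the almost-Euclidean volume condition backward into a definite time window on which curvature estimates become available. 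Once that input is in place, the remaining Anderson-type extraction of harmonic coordinates is standard.
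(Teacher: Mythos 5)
Your strategy has a genuine gap at the pseudo-locality step, and it is essentially circular. The backward/two-sided pseudo-locality results that are actually available for flows with bounded scalar curvature (Corollary 1.6 and Theorem 1.5 of \cite{BZ:1}) do not take ``almost Euclidean volume of one ball'' or even ``Gromov--Hausdorff closeness to the Euclidean ball'' as input: the two-sided version requires the \emph{isoperimetric constant} of $B(x,r,g(t))$ to be almost Euclidean, and the backward version requires a curvature bound at the present time slice as input. Neither hypothesis follows from \eqref{br>1-e}; producing enough regularity at time $t$ from that single volume inequality is precisely the content of Proposition \ref{prtzqco1.7}. Indeed, in the paper the logical order is the reverse of yours: the harmonic radius bound of this proposition is established first, the resulting $C^{1,\a}$ coordinates give the almost-Euclidean isoperimetric constant, and only then is Corollary 1.6 of \cite{BZ:1} invoked (in the proof of Proposition \ref{prch-cogi} (iv)) to get curvature bounds and smooth convergence. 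So as written, your blow-up argument has no way to supply the input that the pseudo-locality theorem needs, and the subsequent Anderson step \cite{An:1} (which needs two-sided Ricci or full curvature bounds) inherits the same gap. A secondary, patchable imprecision: transferring the almost-maximal volume hypothesis between $\hat g_i$ and the conformal metrics $\td g_i$ cannot be done by the bi-Lipschitz bounds $b_1\le h_i\le b_2$ alone, since ``volume ratio close to $\omega_n$'' is not bi-Lipschitz invariant; one must use the $C^{1/2}$ bound to make $h_i$ nearly constant at small scales, exactly as in the proof of Corollary \ref{volcomp} (b).

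For comparison, the paper's proof is short and purely elliptic, with no use of the flow in time: Corollary 1.7 of \cite{TZq2} already gives the harmonic radius lower bound under the stronger hypothesis that \emph{every} sub-ball $B(y,\rho)\subset B(x,r)$ satisfies $|B(y,\rho)|\ge(1-\eta)\omega_n\rho^n$, and the new almost-monotonicity of volume ratios, Corollary \ref{volcomp} (b) (a consequence of Theorem \ref{thplapc}), propagates the single-scale hypothesis \eqref{br>1-e} to all smaller scales inside $B(x,\delta_0 r)$ up to an error that is small for $r\le r_0$. If you want to salvage your approach you would have to prove independently that \eqref{br>1-e} forces an almost-Euclidean isoperimetric constant (or a curvature bound at time $t$), which is not easier than the proposition itself; the efficient route is the volume-comparison reduction the paper uses.
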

\proof
By Corollary 1.7 in \cite{TZq2}, the conclusion of the proposition is true under the stronger assumption: for all $B(y, \rho,
g(t)) \subset B(x, r, g(t))$,
\be
|B(y, \rho, g(t))|_{g(t)} \ge
(1-\eta) \omega_n \rho^n.
\ee Now, with the help of the volume comparison result, Corollary \ref{volcomp} (b), we know from  condition \eqref{br>1-e} that for
 all $B(y, \rho,
g(t)) \subset B(x, \delta_0 r, g(t))$,
\be
|B(y, \rho, g(t))|_{g(t)} \ge
(1-2 \eta) \omega_n \rho^n.
\ee Here $\delta_0>0$ is a small but fixed constant. Hence the result is true after adjusting the constants $\eta$ and $\delta$ suitably by Corollary 1.7 in \cite{TZq2}.
\qed

Now we are ready to give a

\proof ( of Proposition \ref{prch-cogi} (iv)).

Pick a point $p \in \mathcal{R} \subset X$. By part (iii) of the proposition, there exists a radius $r=r(p) \le r_0$ such that $B(p, r(p)) \subset \mathcal{R}$ and that
\[
|B(p, r(p))| \ge \omega_n (1-0.5 \eta) r^n(p).
\]Here $r_0$ is given in Proposition \ref{prtzqco1.7}.
Due to the continuity of volume in part (i), for sufficiently large $i$, we have
\[
|B(x_i, r(p), g(t_i))|_{g(t_i)} \ge \omega_n (1-\eta) r^n(p).
\]Here $x_i \in M$ and $x_i \to p$ as $i \to \infty$. By
Proposition
\ref{prtzqco1.7}, we deduce
\[
r^{\a, \theta}_{g(t_i)}(x_i) \ge
\delta r(p).
\]By choosing and fixing the parameter $\theta$ sufficiently small, we can ensure that the isoperimetric constant of $B(x_i, r(p), g(t_i))$ is close enough to the Euclidean one.
Thus we can apply Corollary 1.6 in \cite{BZ:1} to deduce
\[
|Rm|_{g(t_i)} \le C r^{-2}(p)  \quad \text{in} \quad B(x_i, c \delta r(p), g(t_i)) \times [t_i-c \delta^2 r^2(p), t_i+c \delta^2 r^2(p)]
\]for some positive constants $c \le 1, C \ge 1$. The smooth convergence on $\mathcal{R}$ follows after a covering argument. Finally the convergence to a gradient K\"ahler Ricci soliton on $\mathcal{R}$ is standard using Perelman's $W$ entropy, see \cite{Se:1} or \cite{TZz:1} e.g.
\qed

\medskip

{\it Step 4.} co-dimension $4$ property of the singular set $\mathcal{S}$.

We will go back to the conformal metrics $(M, \td g(t_i))=(M, h_i g(t_i))$ and assume that they converge to a metric space $(X, d)$ with the smooth and singular decomposition $Y= \mathcal{\td R}
\cup \mathcal{\td S}$. It suffices to prove $\mathcal{\td S}$ has co-dimension at least $4$ since $C^{1/2}$ norms of $h_i$ are uniformly bounded and $h_i$ is between two positive constants.

First let us recall Theorem 1.23  in \cite{CCT}.

{\it Suppose a sequence of non-collapsed K\"ahler manifolds  $(M, \omega_i)$ converges, G-H, to a metric space $(X, d)$. If the Ricci curvatures, $L^2$ norms of Riemannain curvatures are uniformly bounded, then the codimension of the singular set of $X$ is at least 4.}

 To extend this result to $(M, \td g(t_i))$, there are apparently two obstacles.
 The first is  that $(M, \td g(t_i))$ is not K\"ahler. The second is that we do not know the Ricci curvature is uniformly bounded. However since the conformal factors satisfy $\Vert h_i \Vert_{C^{1/2}} \le C$, the tangent cones of $(X, d)$  are K\"ahler since they are the same as those of the limit space of the K\"ahler manifolds $(M, g(t_i))$. Therefore we only need to prove  the conclusion of Theorem 1.22 in \cite{CCT} which is the corresponding real version of Theorem 1.23. By checking the proof of this theorem, after establishing the $\e$ splitting and metric cone properties and Lemma 2.18, there are only two places where the boundedness of the Ricci curvature is needed. One is the proof of Theorem 3.7 which uses (3.6) that involves the Ricci curvature, the other is in the proof of Theorem 7.2 where the existence of $C^{1, \a}$ harmonic coordinates in the sense of \cite{An:1} is needed. Fortunately, the a priori bounds for $Ric_{\td g}$   in the suitable Morrey  and Kato class in Proposition \ref{pr3.00} is enough to carry out the proof.

 Indeed, the $\e$ splitting map and metric cone property are already done in step 2.2 and 2.4. In the proof of Lemma 2.18 in \cite{CCT}, the boundedness of the Ricci curvature are only  used at two places (2.28) and (2.31). But (2.28) is an immediate consequence of nonhomogeneous mean value inequality whose proof is just like that of Proposition \ref{pr3.1} in the current case. Inequality (2.31) is just the
 Poincar\'e inequality in a ball, which also holds by (5) at the beginning of the section. So the conclusion for Lemma 2.18 is valid for us.

Next, using \eqref{ricwanma}, the method in \cite{TZq2} (Sec. 3, 4) can be applied verbatim to prove the existence of $C^{1, \a}$ harmonic radius. In fact the Morrey type bound is the first lemma in that paper.  Hence the conclusion of Proposition \ref{prtzqco1.7} hold for $(M, \td g)$. Note that proposition is stated for the original metric $(M, g)$. Since $\td g = e^{2 f} g$ and $h$ is not $C^{1, \a}$ we can not immediately claim the result for $(M, \td g)$.

Now let us show that the conclusion of Theorem 3.7 in \cite{CCT} holds for $(M, \td g)$, which states that the regular points of a $\e$ splitting map $\Phi$ on a unit ball $B(x, 1)$  has almost full measure, provided the $Ric \ge -(n-1) \e^2$ and  $B(x, l)$ is $l^{-1}$ close in GH topology to the Euclidean ball for large, fixed $l$. It is a scaled version of the same statement for balls with sufficiently small radius.
During the proof, parts of the Ricci curvature enters via equation (3.6). Due to the sign of the Ricci term in the equation,  only the integral of  the Ricci lower bound $-(n-1) \e^2$ on a 3 ball is used in line 6 p894, which results in smallness of the term.

In our situation, we have neither the lower or upper bound for the Ricci curvature.
However, consider the scaled metric $\hat g = r^{-2} \td g$, $r \in (0, 1]$. We have, by
\eqref{ricwanma}
\be
\lab{richatscale}
\int_{B(x, 3, \hat g)} |Ric_{\hat g}|^2_{\hat g} d \hat g
= r^{-n} \, r^4 \int_{B(x, 3 r, \td g)} |Ric_{\td g}|^2_{\td g} d \td g \le 3^{n-2} A_0 r^2.
\ee Therefore the integral of the full scaled Ricci curvature is small on a 3 ball when the scale is small. This suffices to replace the Ricci lower bound assumption in Theorem 3.7 \cite{CCT}. Thus its conclusion is true for $(M, \td g)$. As mentioned, we can now follows \cite{CCT} verbatim to conclude that the singular set $\mathcal{\td S}$ has co-dimension at least $4$. Note that volume non-collapsing and uniform $L^2$ curvature bound for $(M, \td g)$ are already known by  property 4 in Section 1 and \eqref{rml2wan}.
 This wraps up the proof of the theorem. \qed

\medskip

\section{Remark}

In the first version of the paper, there is an appendix in which we mentioned possible
issues in two previous publications.

One is the lower bound of the Bergman kernel when the metrics blow up in \cite{CW:4}.
Afterwards the authors of \cite{CW:4} posted a note on arXiv disputing our
claim. We have responded to them in more details about our concerns.

The other one is a possible issue in going from Lemma 6.2 to Lemma 6.3 in \cite{Ba:1}.
Recently the author of \cite{Ba:1} informed us that the proof of Lemma 6.3 works and explained his arguments. We agree with it and thank professor R. Bamler for his patient explanation.

 After further consultation and discussion, the appendix in version 1 of the paper is replaced by a supplement with the link  "https://sites.google.com/view/qiszhang-ucr249/home", click " Supplement.arxiv.2509.14820"

\medskip

\hspace{-.5cm}{\bf Acknowledgements}

{\small G. T. is supported by  National Key R.D. Program of China 2020YFA0712800.

Q. S. Z. gratefully acknowledges the support of Simons'
Foundation grant 710364. Thanks also go to Professors R. Bamler, X.X. Chen, B. Wang and Guofang Wei for helpful discussion over the years.

X.H.Z is partially supported by National Key R.D. Program of China 2023YFA1009900 and
2020YFA0712800, and NSFC 12271009.}

\end{document}